\DeclareSymbolFont{txfontsC}{U}{txsyc}{m}{n}
\let\notin\undefined
\DeclareMathSymbol{\notin}{\mathrel}{txfontsC}{60}
\DeclareMathSymbol{\defeq}{\mathrel}{txfontsC}{66}
\renewcommand\emptyset{\varnothing}
\renewcommand{\sim}{\thicksim}
\numberwithin{equation}{section}
\let\osphat=\sphat
\renewcommand{\sphat}{\osphat\ }
\newcommand{\Groth}[1]{\ensuremath{{#1}^{\mathrm{gp}}}}
\newcommand{\defset}[2]{\{\, #1 \mid #2 \,\}}
\newcommand{\lhom}[2]{\mathrm{Hom}(#1 ,\, #2)}
\newcommand{\abs}[1]{\lvert{#1}\rvert}
\DeclareMathOperator{\Spec}{Spec}
\theoremstyle{definition}
\newtheorem{monoD}{Definition}[section]
\theoremstyle{plain}
\newtheorem{monoT}[monoD]{Theorem}
\newtheorem{monoL}[monoD]{Lemma}
\newcommand{\Temb}[1][R]{\ensuremath{\mathrm{T}(\triangle,#1)}}
\newcommand{\Memb}[1][R]{\ensuremath{\mathscr{M}(\triangle,#1)}}
\newcommand{\Temba}[1][R]{\ensuremath{\mathrm{T}(\triangle',#1)}}
\newcommand{\Memba}[1][R]{\ensuremath{\mathscr{M}(\triangle',#1)}}
\begin{document}

\title{Purely Algebraic Method to Construct Toric Schemes}
\author{Ting Li}
\address{Department of Mathematics\\
    Peking University\\
    Beijing 100871\\
    P. R. China}
\email{etale.moduli@gmail.com}
\keywords{Monoid, Fan, Toric Variety, Toric Scheme}

\begin{abstract}
In this article, we first give some elementary proprieties of monoids and fans, then construct a toric scheme over an arbitrary ring, from a given fan. Using Valuative Criterion, we prove that this scheme is separated and give the sufficient and necessary condition when it is proper. We also study the regularity and logarithmic regularity of it. Finally we study the morphisms of toric schemes induced by the homomorphisms of fans.
\end{abstract}

\maketitle

\section*{Introduction}
The study of toric varieties began in the early 1970's. Till now, people have made great progress in this field. Many applications in algebraic geometry, complex manifolds and number theory etc.~have been discovered. Tadao Oda \cite{TOda} is a comprehensive book on this area. Till now the treatment of toric varieties need more or less the technique of complex analysis. This may hamper its applications in number theory, where we must study algebraic scheme over any base ring, especially over $\mathbb{Z}$ or finite fields. So I am determined to find a purely algebraic approach once and for all.

The keystone of this kind of construction is the concepts of \emph{convex cone} and \emph{concave cone}. In geometry, convex cones are always contained in some affine space, whereas convex cones here are contained in some free abelian group of finite rank, which acts the role of the affine space. For convenience to algebraic treatment, we invent its dual notion \emph{concave cone}. Having these two concepts as bricks, we can easily define fans and toric schemes (over arbitrary rings); and their properties, for example separatedness and properness, easily follow.

We also construct logarithmical structures on toric schemes and prove that when their base rings are regular, they are logarithmically regular. Hence by \cite{KKato}, their singularities can be solved.

In this article, rings, algebras and monoids are all assumed to be commutative and have identity elements. A homomorphism of rings (resp.~monoids) is assumed to preserve the identity element. A subring (resp.~monoid) is assumed to contain $1$ of the total ring (resp.~monoid).

\section{Monoids and Ideals}
In this section, we list some elementary results about monoids and ideals, which can also be found in many other references, for example, \cite{KKato}.

\begin{monoL}
A monoid $M$ is integral if and only if $M \to \Groth{M}$ is injective, where $\Groth{M}$ is the Grothendieck group associated with $M$.
\end{monoL}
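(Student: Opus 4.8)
The plan is to deduce both implications directly from the explicit construction of the Grothendieck group, so I first want the usual presentation of it in hand. Recall that $\Groth{M}$ can be realized as the quotient of $M \times M$ by the relation $(a,b) \sim (c,d)$ if and only if there is $e \in M$ with $aed = ceb$, the class of $(a,b)$ playing the role of the formal quotient ``$ab^{-1}$'', and the canonical homomorphism $\iota \colon M \to \Groth{M}$ sending $a$ to the class of $(a,1)$. Two routine observations will carry the argument: $\iota(a)\iota(b)^{-1}$ is the class of $(a,b)$, so $\iota(a) = \iota(b)$ exactly when $(a,1) \sim (b,1)$; and, unwinding the definition of $\sim$, the latter holds exactly when $ae = be$ for some $e \in M$.

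For the ``if'' direction I would assume $\iota$ injective and take $a,b,c \in M$ with $ab = ac$. Since $\iota(a)$ is invertible in the group $\Groth{M}$, multiplying $\iota(a)\iota(b) = \iota(ab) = \iota(ac) = \iota(a)\iota(c)$ by $\iota(a)^{-1}$ yields $\iota(b) = \iota(c)$, and injectivity of $\iota$ gives $b = c$; thus $M$ is integral.

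For the ``only if'' direction I would assume $M$ integral and take $a,b \in M$ with $\iota(a) = \iota(b)$. By the observations above there is $e \in M$ with $ae = be$, and cancellation yields $a = b$, so $\iota$ is injective. The only step requiring any care is the explicit description of $\sim$ and of the fibres of $\iota$ in terms of an auxiliary cancelling element $e$ --- this is standard and follows in a line from the universal property of $\Groth{M}$ --- and everything else is formal; I do not expect a genuine obstacle beyond that.
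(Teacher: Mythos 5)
Your argument is correct and is the standard one: realize $\Groth{M}$ explicitly as $(M\times M)/\sim$, observe that $\iota(a)=\iota(b)$ exactly when $ae=be$ for some $e\in M$, and read off both implications (cancellativity gives injectivity of $\iota$, and injectivity of $\iota$ plus invertibility in the group gives cancellativity). The paper states this lemma without proof, citing it as an elementary fact available in standard references, so there is no authorial argument to compare against; your write-up supplies exactly the routine verification being omitted.
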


\begin{monoD}
Let $M$ be an integral monoid and $P$ a submonoid of $M$. If $\Groth{P}\cap M=P$, then we say that $P$ is \emph{full} in $M$.
\end{monoD}

\begin{monoL}
Let $M$ be an integral monoid and $S$ a submonoid of $M$.
\begin{enumerate}
\item $S^{-1}M$ is an integral monoid with Grothendieck group \Groth{M}.
\item If $M$ is saturated, so is $S^{-1}M$.
\end{enumerate}
\end{monoL}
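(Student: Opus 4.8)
The plan is to work throughout with the concrete model of the localization $S^{-1}M$, whose elements are the fractions $m/s$ with $m\in M$ and $s\in S$, and to use at every step that $M$, being integral, is cancellative. Since $M$ embeds into $\Groth M$ ($M$ being integral), the defining relation on $S^{-1}M$ simplifies to: $m/s=m'/s'$ precisely when $ms'=m's$ holds in $M$.

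For part (1) I would first verify directly that $S^{-1}M$ is cancellative: if $(m_1/s_1)(m/s)=(m_2/s_2)(m/s)$, then $m_1ms_2s=m_2ms_1s$ in $M$, and cancelling $ms$ gives $m_1s_2=m_2s_1$, i.e.\ $m_1/s_1=m_2/s_2$; thus $S^{-1}M$ is integral. To identify the Grothendieck groups I would use universal properties. The composite $M\to S^{-1}M\to\Groth{(S^{-1}M)}$ induces a homomorphism $\Groth M\to\Groth{(S^{-1}M)}$. Conversely, every element of $S$ is invertible in the group $\Groth M$, so the inclusion $M\hookrightarrow\Groth M$ factors uniquely through $S^{-1}M$ as a homomorphism $\iota\colon S^{-1}M\to\Groth M$; using the simplified relation one checks $\iota$ is injective, so $S^{-1}M$ is a submonoid of $\Groth M$, and $\iota$ induces in turn $\Groth{(S^{-1}M)}\to\Groth M$. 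A routine check on generators shows the two induced homomorphisms are mutually inverse; hence $\Groth{(S^{-1}M)}=\Groth M$.

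For part (2), assume $M$ is saturated and take $x\in\Groth{(S^{-1}M)}=\Groth M$ with $x^n\in S^{-1}M$ for some integer $n\ge 1$; write $x^n=m/s$ with $m\in M$ and $s\in S$, which in $\Groth M$ says $s\,x^n=m$. The one step that needs an idea is to clear the denominator \emph{before} invoking saturatedness: set $y:=sx\in\Groth M$, so that $y^n=s^nx^n=s^{\,n-1}(s\,x^n)=s^{\,n-1}m\in M$. Since $M$ is saturated, $y\in M$, and therefore $x=y/s\in S^{-1}M$. Hence $S^{-1}M$ is saturated.

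I do not expect a genuine obstacle: the only non-mechanical move is the substitution $y=sx$ in part (2), which exchanges an element of $\Groth M$ for an element of $M$ while keeping an $n$-th power inside $M$. Everything else is bookkeeping with the cancellative relation and with the universal property of the Grothendieck group.
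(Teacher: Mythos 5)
Your proof is correct. The paper states this lemma without proof (it is listed among the elementary facts deferred to references such as Kato), and your argument is the standard one: cancellativity of $M$ reduces the localization relation to $ms'=m's$, the universal property identifies $\Groth{(S^{-1}M)}$ with $\Groth{M}$, and the substitution $y=sx$ correctly clears the denominator so that saturatedness of $M$ can be applied; every step checks out.
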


\begin{monoL}\label{ds4646}
Let $M$ be an integral monoid and $N$ a submonoid of $M$. Then
\begin{enumerate}
\item $M/N$ is also integral.
\item There is an isomorphism of groups:
\[\Groth{(M/N)} \stackrel{\sim}{\rightarrow} \Groth{M}/\Groth{N},\quad \frac{\bar{x}}{\bar{y}} \mapsto \overline{\frac{x}{y}}\,.\]
\item If $M$ is saturated, so is $M/N$.
\end{enumerate}
\end{monoL}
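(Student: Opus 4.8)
The plan is to carry out everything inside Grothendieck groups, using that by the first Lemma of this section ``integral'' means ``cancellative'', so that every integral monoid sits as a submonoid of its Grothendieck group. Recall that $M/N$ is $M/{\sim}$ with $x\sim y$ precisely when $ax=by$ for some $a,b\in N$; in particular every element of $N$ maps to $1$ in $M/N$. Part~(1) is then immediate: to see $M/N$ is integral it suffices to see it is cancellative, and from $\overline{xz}=\overline{yz}$ one gets $axz=byz$ in $M$ for suitable $a,b\in N$, so cancelling $z$ (allowed since $M$ is integral) yields $ax=by$, i.e.\ $\bar x=\bar y$.

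For part~(2) I would first observe that $\Groth N\to\Groth M$ is injective --- again by cancellativity of $M$ --- so that $\Groth N$ may be regarded as a subgroup of $\Groth M$ and $\Groth M/\Groth N$ makes sense. Then I define $\phi\colon\Groth{(M/N)}\to\Groth M/\Groth N$ by $\bar x/\bar y\mapsto\overline{x/y}$ and verify the four routine points. \emph{Well-defined:} an equality $\bar x/\bar y=\bar u/\bar v$ in $\Groth{(M/N)}$ forces $\bar x\bar v=\bar u\bar y$ (by part~(1) we may read off equality of fractions in the monoid), hence $axv=buy$ in $M$ for some $a,b\in N$, so $(x/y)(u/v)^{-1}=b/a\in\Groth N$. \emph{Homomorphism:} clear from the formula. \emph{Surjective:} $\overline{m/m'}=\phi(\bar m/\overline{m'})$. \emph{Injective:} if $x/y\in\Groth N$, write $x/y=n/n'$ with $n,n'\in N$; then $xn'=yn$ in $M$, and applying the quotient map (which sends $n$ and $n'$ to $1$) gives $\bar x=\bar y$. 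This exhibits $\phi$ as the stated isomorphism.

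For part~(3) the cleanest route is to reduce to the localization lemma already proved above (Lemma~1.3). Via the isomorphism of part~(2) I identify $M/N$ with the image $\pi(M)\subseteq G:=\Groth M/\Groth N$, where $\pi\colon\Groth M\to G$ is the projection, and I realize $N^{-1}M$ as the submonoid $\defset{m/s}{m\in M,\ s\in N}$ of $\Groth M$ (legitimate because Lemma~1.3 says $N^{-1}M$ is integral with Grothendieck group $\Groth M$). The one point with content is the identity $\pi^{-1}(\pi(M))=N^{-1}M$ inside $\Groth M$: $\pi(u)\in\pi(M)$ iff $um^{-1}\in\Groth N$ for some $m\in M$, iff $u\in M\cdot\Groth N=N^{-1}M$. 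Granting this, suppose $M$ is saturated and take $\xi\in G$ with $\xi^{n}\in\pi(M)$ for some $n\ge1$; lifting $\xi$ to $u\in\Groth M$ gives $u^{n}\in\pi^{-1}(\pi(M))=N^{-1}M$, which is saturated by Lemma~1.3 and whose Grothendieck group is $\Groth M\ni u$, so $u\in N^{-1}M$ and therefore $\xi=\pi(u)\in\pi(N^{-1}M)=\pi(M)=M/N$. One could instead argue by hand: with $\xi=\overline{x/y}$, $x,y\in M$, and $x^{n}q=y^{n}mp$ for some $m\in M$, $p,q\in N$, the element $xq/y\in\Groth M$ has $n$-th power $q^{n-1}mp\in M$, hence lies in $M$ by saturatedness, and maps to $\xi$. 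Apart from this identification --- or, in the hands-on version, spotting the right $N$-multiple that clears the denominator coming from $N$ --- I expect no real obstacle; the main care needed is just to keep the several Grothendieck groups and identifications straight.
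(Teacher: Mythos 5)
The paper states this lemma without proof (it is quoted as one of the standard facts from \cite{KKato}), so there is no argument of the author's to compare yours against; judged on its own, your proof is correct and complete. Parts (1) and (2) are the routine verifications done properly, and in part (3) the identification $\pi^{-1}(\pi(M))=N^{-1}M$ inside $\Groth{M}$, combined with the saturatedness of $N^{-1}M$ from the localization lemma, is exactly the right device for clearing the denominators coming from $N$ (your hands-on variant with $xq/y$ works equally well).
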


\begin{monoL}\label{dsf5798}
Let $M$ be a finitely generated saturated monoid. Then there is a submonoid $N$ of
$M$ such that $M=N \times M^{\ast}$. Moreover all such submonoids $N$ satisfy
the following conditions.
\begin{enumerate}
\item $N^{\ast}=\{1\}$ and $\Groth{M}=\Groth{N}\times M^{\ast}$.
\item $N$ is finitely generated and saturated.
\end{enumerate}
\end{monoL}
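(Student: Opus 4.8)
The plan is to obtain the splitting by choosing, inside $\Groth{M}$, a complementary subgroup to the unit group $M^{\ast}$. First I would record that, since $M$ is finitely generated, $\Groth{M}$ is a finitely generated abelian group, hence so is its subgroup $M^{\ast}$. The decisive point is that $\Groth{M}/M^{\ast}$ is torsion-free: if $x\in\Groth{M}$ satisfies $x^{n}\in M^{\ast}$ for some integer $n\geq 1$, then $x^{n}\in M$ and $(x^{-1})^{n}=(x^{n})^{-1}\in M^{\ast}\subseteq M$, so saturatedness of $M$ forces $x\in M$ and $x^{-1}\in M$, i.e.\ $x\in M^{\ast}$. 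Thus $\Groth{M}/M^{\ast}$ is finitely generated and torsion-free, therefore free; consequently the exact sequence $1\to M^{\ast}\to\Groth{M}\to\Groth{M}/M^{\ast}\to 1$ splits, and I fix a complement, i.e.\ a subgroup $L\leq\Groth{M}$ with $\Groth{M}=M^{\ast}\times L$ as an internal direct product. (By Lemma~\ref{ds4646} one may, if desired, identify $L$ with $\Groth{(M/M^{\ast})}$.)

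For existence I would then set $N:=M\cap L$, which is a submonoid of $M$, and check that the multiplication map $N\times M^{\ast}\to M$ is an isomorphism. Injectivity is immediate from the uniqueness of the decomposition in $\Groth{M}=M^{\ast}\times L$, since $N\subseteq L$. For surjectivity, given $m\in M$ write $m=u\ell$ with $u\in M^{\ast}$ and $\ell\in L$; then $\ell=u^{-1}m\in M$ because $u^{-1}\in M^{\ast}\subseteq M$, so $\ell\in M\cap L=N$ and $m=u\ell\in M^{\ast}\cdot N$. Hence $M=N\times M^{\ast}$.

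Now let $N$ be any submonoid of $M$ with $M=N\times M^{\ast}$. First $N\cap M^{\ast}=\{1\}$: an element $x$ of the intersection is represented in $N\times M^{\ast}$ both by $(x,1)$ and by $(1,x)$, so $x=1$ by injectivity of the product map. Since a unit of a submonoid is a unit of the ambient monoid, $N^{\ast}\subseteq M^{\ast}$, whence $N^{\ast}\subseteq N\cap M^{\ast}=\{1\}$. Applying $\Groth{-}$ (which carries the product $N\times M^{\ast}$ to $\Groth{N}\times M^{\ast}$, as one checks directly on representatives) to $M=N\times M^{\ast}$ yields $\Groth{M}=\Groth{N}\times M^{\ast}$; in particular $\Groth{N}\cap M^{\ast}=\{1\}$. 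For finite generation, choose monoid generators $m_{1},\dots,m_{k}$ of $M$ and write $m_{i}=n_{i}u_{i}$ with $n_{i}\in N$, $u_{i}\in M^{\ast}$; then any $n\in N$ equals $\prod m_{i}^{a_{i}}=\bigl(\prod n_{i}^{a_{i}}\bigr)\bigl(\prod u_{i}^{a_{i}}\bigr)$ for some $a_{i}\geq 0$, and uniqueness of the decomposition forces $n=\prod n_{i}^{a_{i}}$, so $N$ is generated by $n_{1},\dots,n_{k}$. Finally $N$ is saturated: if $x\in\Groth{N}\subseteq\Groth{M}$ and $x^{n}\in N\subseteq M$ for some $n\geq 1$, then $x\in M$ by saturatedness of $M$; writing $x$ through $M=N\times M^{\ast}$, its $M^{\ast}$-component lies in $M^{\ast}\cap\Groth{N}=\{1\}$, so $x\in N$.

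The only step that is not purely formal is the torsion-freeness of $\Groth{M}/M^{\ast}$ (equivalently, that $M^{\ast}$ is a pure subgroup of $\Groth{M}$), which is precisely where saturatedness of $M$ is used; once a complement $L$ is in hand the existence of $L$ itself being the structure theorem for finitely generated abelian groups all remaining verifications are bookkeeping with the uniqueness of the internal direct product decompositions, with the one subtlety that the uniqueness part must invoke injectivity of $N\times M^{\ast}\to M$, not merely its surjectivity.
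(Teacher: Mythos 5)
Your proof is correct. The paper states Lemma~\ref{dsf5798} without proof (it is listed among the standard facts quoted from \cite{KKato}), and your argument --- splitting off $M^{\ast}$ inside $\Groth{M}$ via the torsion-freeness of $\Groth{M}/M^{\ast}$, which is precisely where saturatedness is used, taking $N=M\cap L$ for a complement $L$, and then deducing the remaining assertions from uniqueness of the internal direct-product decomposition --- is the standard route and is complete.
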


\begin{monoD}
Let $M$ be a monoid. A subset $I$ of $M$ is called an \emph{ideal} of $M$ if
$MI \subseteq I$. An ideal $I$ of $M$ is called a \emph{prime ideal} if its complement
$M-I$ is a submonoid of $M$. We denote by $\Spec M$ the set of prime ideals of $M$.

For example, $\emptyset$ and $M-M^{\ast}$ are prime ideals of $M$.

For $\mathfrak{p} \in \Spec M$, we define
$M_{\mathfrak{p}} \defeq S^{-1}M$, where $S=M-\mathfrak{p}$.

For a homomorphism $h \colon M \to N$ of monoids, we have a
canonical map
$\Spec N \to \Spec M,\ \mathfrak{p} \mapsto h^{-1}(\mathfrak{p})$.
\end{monoD}

\begin{monoT} \label{iuyiu7}
Let $M$ be a monoid and $\mathfrak{p}$ a prime ideal of $M$. Put $N=M-\mathfrak{p}$. Then:
\begin{itemize}
\item[(1)] $\mathfrak{q} \mapsto N^{-1}\mathfrak{q}$ is a bijective from the set
$\defset{\mathfrak{q} \in \Spec M}{\mathfrak{q} \subseteq \mathfrak{p}}$ to
$\Spec M_{\mathfrak{p}}$.
\item[(2)] $\mathfrak{P} \mapsto \mathfrak{P} \cap N$ is a bijective from the set
$\defset{\mathfrak{P} \in \Spec M}{\mathfrak{p} \subseteq \mathfrak{P}}$
to $\Spec N$ with the inverse map
$\mathfrak{q} \mapsto \mathfrak{q} \cup \mathfrak{p}$.
\end{itemize}
Assume that $M$ is integral, then:
\begin{itemize}
\item[(3)] $(M_{\mathfrak{p}})^{\ast}=\Groth{N}$.
\item[(4)] There is a canonical isomorphism of monoids:
\[M/N \cong M_{\mathfrak{p}}/(M_{\mathfrak{p}})^{\ast}\,.\]
Hence $(M/N)^{\ast}=\{1\}$.
\item[(5)] If $M$ is finitely generated, so is $N$.
\end{itemize}
Assume that $M$ is saturated, then:
\begin{itemize}
\item[(6)] $N$ is a saturated, full submonoid of $M$.
\item[(7)] $\Groth{M}/\Groth{N}$ is torsion free.
\end{itemize}
\end{monoT}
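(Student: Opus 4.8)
The plan is to dispatch the seven assertions one after another, leaning throughout on the one structural feature of $N=M-\mathfrak p$ that drives everything: since $\mathfrak p$ is an \emph{ideal}, the identity $ab\in N$ already forces $a\in N$ (if $a\in\mathfrak p$ then $ab=ba\in M\mathfrak p\subseteq\mathfrak p$, a contradiction). Thus $N$ is not merely a submonoid of $M$ but is closed under passing to divisors, and this is exactly what is needed at the critical spots below. For the assertions invoking integrality or saturatedness I will also use the earlier lemmas on $S^{-1}M$ and on $M/N$ (in particular Lemma~\ref{ds4646}).

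For (1) I would set up the standard bijection between prime ideals of a localization and prime ideals disjoint from the multiplicative set. With $\phi\colon M\to M_{\mathfrak p}=N^{-1}M$, I claim $\mathfrak q\mapsto N^{-1}\mathfrak q:=\{\,a/s : a\in\mathfrak q,\ s\in N\,\}$ and $\mathfrak Q\mapsto\phi^{-1}(\mathfrak Q)$ are mutually inverse bijections between $\{\mathfrak q\in\Spec M : \mathfrak q\cap N=\emptyset\}$ and $\Spec M_{\mathfrak p}$: one checks $N^{-1}\mathfrak q$ is an ideal, that its complement in $M_{\mathfrak p}$ is precisely the submonoid $N^{-1}(M-\mathfrak q)$ (using $\mathfrak q\cap N=\emptyset$ to see the two pieces are disjoint, so $N^{-1}\mathfrak q$ is prime), that $\phi^{-1}$ of a prime is a prime meeting $N$ trivially because $\phi(N)$ consists of units, and that the two constructions undo one another. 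Since a prime $\mathfrak q$ is disjoint from $N=M-\mathfrak p$ exactly when $\mathfrak q\subseteq\mathfrak p$, this is the asserted indexing. For (2) I would argue directly: if $\mathfrak p\subseteq\mathfrak P\in\Spec M$, then $\mathfrak P\cap N$ is an ideal of $N$ whose complement $N\cap(M-\mathfrak P)$ is an intersection of submonoids, hence prime; conversely for $\mathfrak q\in\Spec N$, the set $\mathfrak q\cup\mathfrak p$ is an ideal of $M$ (check $M\mathfrak p\subseteq\mathfrak p$, $\mathfrak p\mathfrak q\subseteq\mathfrak p$ since $\mathfrak p$ is an ideal of $M$ and $\mathfrak q\subseteq M$, and $N\mathfrak q\subseteq\mathfrak q$ since $\mathfrak q$ is an ideal of $N$, then combine via $M=N\sqcup\mathfrak p$), and its complement is the submonoid $N-\mathfrak q$, so it is prime; both composites are the identity, using $\mathfrak p\cap N=\emptyset$. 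For (5), writing $M=\langle T\rangle$ with $T$ finite, divisor-closedness of $N$ shows that any product of elements of $T$ lying in $N$ has all its factors in $N$, so $N=\langle T\cap N\rangle$ is finitely generated.

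For (3)--(4) and (6)--(7) I would invoke integrality. In (3), $M_{\mathfrak p}$ is integral with $\Groth{M_{\mathfrak p}}=\Groth M$; an element $m/s$ ($m\in M$, $s\in N$) is a unit iff $s/m\in M_{\mathfrak p}$, i.e.\ iff $mm'=ss'\in N$ for some $m'\in M$, $s'\in N$, and divisor-closedness then gives $m\in N$, so $(M_{\mathfrak p})^{\ast}=\{\,m/s : m,s\in N\,\}=\Groth N$ (identified with its image in $\Groth M$, which is injective as $M$ is integral). In (4), $M_{\mathfrak p}$ is generated inside $\Groth M$ by $M$ together with $\Groth N=(M_{\mathfrak p})^{\ast}$, so the image of $M_{\mathfrak p}$ in $\Groth M/\Groth N\cong\Groth{(M/N)}$ equals the image of $M$, which identifies $M_{\mathfrak p}/(M_{\mathfrak p})^{\ast}$ with $M/N$; and since $(P/P^{\ast})^{\ast}=\{1\}$ for every monoid $P$, this yields $(M/N)^{\ast}=\{1\}$. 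In (6), if $M$ is saturated and $x\in\Groth N$ satisfies $x^{n}\in N$, then $x\in M$ by saturatedness of $M$ and $x\notin\mathfrak p$ because $x^{n}\notin\mathfrak p$, so $x\in N$; and if $x\in\Groth N\cap M$, writing $x=a/b$ with $a,b\in N$ gives $xb=a\in N$ in $M$, whence $x\notin\mathfrak p$, so $N$ is full. Finally (7) follows by combining: by Lemma~\ref{ds4646}, $M/N$ is saturated and $\Groth{(M/N)}\cong\Groth M/\Groth N$; since $(M/N)^{\ast}=\{1\}$ by (4), any $g\in\Groth{(M/N)}$ with $g^{n}=1$ lies in $M/N$ by saturatedness, as does $g^{-1}$, so $g\in(M/N)^{\ast}=\{1\}$, and the group is torsion free.

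I expect the only genuinely delicate point to be (1): making the prime-ideal correspondence for the monoid localization fully rigorous — especially that the complement of $N^{-1}\mathfrak q$ is \emph{exactly} $N^{-1}(M-\mathfrak q)$ with no overlap — since, unlike the ring-theoretic case, one has to argue directly with the defining equivalence relation on fractions $a/s=m/t \iff utm=usa$ for some $u\in N$. Once that bookkeeping is done, everything else reduces to the single observation that $N$ is closed under divisors in $M$, together with the previously established lemmas on localizations and on quotients $M/N$.
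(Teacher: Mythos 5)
Your proof is correct and complete. The paper states Theorem~\ref{iuyiu7} without any proof (it is listed among the elementary facts in Section~1, with a pointer to the references), so there is nothing to compare against; your argument --- the face property of $N=M-\mathfrak{p}$ (closure under divisors) driving (1), (2), (3), (5), (6), together with Lemma~\ref{ds4646} applied to $M/N$ for (4) and (7) --- is the standard route, and every step checks out, including the well-definedness of $N^{-1}\mathfrak{q}$ in (1), which you rightly flag as the one delicate point and which indeed reduces to the primality of $\mathfrak{q}$ applied to the relation $utm=usa$ with $u,t,s\in N\subseteq M-\mathfrak{q}$.
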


\begin{monoD}
For a monoid $M$, we define $\dim(M)$ to be the maximal length of
a sequence of prime ideals
${\mathfrak{p}}_0 \subset {\mathfrak{p}}_1 \subset \cdots \subset {\mathfrak{p}}_r$
of $M$. If such a maximum does not exist, we define $\dim(M) = \infty$.
\end{monoD}

\begin{monoL}\label{niy4f3}
Let $M$ be a finitely generated monoid. Then $\Spec M$ is a finite set.
\end{monoL}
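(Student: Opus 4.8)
The plan is to turn a statement about \emph{ideals} into one about \emph{submonoids} by passing to complements, and then to bound the number of possible complements by a function of the number of generators. If $\mathfrak{p}\in\Spec M$, then by definition $S_{\mathfrak{p}}\defeq M-\mathfrak{p}$ is a submonoid of $M$, and clearly $\mathfrak{p}\mapsto S_{\mathfrak{p}}$ is injective (one recovers $\mathfrak{p}$ as $M-S_{\mathfrak{p}}$). So it suffices to show that only finitely many submonoids of $M$ arise in this way.

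So fix a finite generating set $x_1,\dots,x_n$ of $M$; every element of $M$ is a product $x_{i_1}\cdots x_{i_k}$ of generators (repetitions allowed, the empty product being $1$). For $\mathfrak{p}\in\Spec M$ put $A_{\mathfrak{p}}\defeq\defset{i}{x_i\notin\mathfrak{p}}\subseteq\{1,\dots,n\}$. I claim $S_{\mathfrak{p}}$ equals the submonoid $\langle x_i : i\in A_{\mathfrak{p}}\rangle$ generated by those $x_i$ not lying in $\mathfrak{p}$. The inclusion $\supseteq$ is immediate: each such $x_i$ lies in $S_{\mathfrak{p}}$ and $S_{\mathfrak{p}}$ is a submonoid, hence contains all their products. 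For $\subseteq$, take $y\in S_{\mathfrak{p}}$ and write $y=x_{i_1}\cdots x_{i_k}$; if some factor $x_{i_j}$ lay in $\mathfrak{p}$, then since $\mathfrak{p}$ is an ideal we would get $y=x_{i_j}\cdot\bigl(\prod_{l\ne j}x_{i_l}\bigr)\in M\mathfrak{p}\subseteq\mathfrak{p}$, contradicting $y\notin\mathfrak{p}$. Hence every $i_j\in A_{\mathfrak{p}}$ and $y\in\langle x_i : i\in A_{\mathfrak{p}}\rangle$.

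Consequently $S_{\mathfrak{p}}$, and therefore $\mathfrak{p}=M-S_{\mathfrak{p}}$, is completely determined by the subset $A_{\mathfrak{p}}$ of $\{1,\dots,n\}$, so the map $\mathfrak{p}\mapsto A_{\mathfrak{p}}$ is injective and $\#\Spec M\le 2^{n}<\infty$. I do not expect a genuine obstacle here; the only point that needs a little care is that the inclusion $S_{\mathfrak{p}}\subseteq\langle x_i : i\in A_{\mathfrak{p}}\rangle$ really uses the hypothesis that $\mathfrak{p}$ is an ideal (not just that its complement is a submonoid), together with the convention that the empty product lies in every submonoid; the rest is routine.
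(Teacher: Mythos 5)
Your proof is correct and complete: the reduction from prime ideals to their complementary submonoids, the identification $M-\mathfrak{p}=\langle x_i : x_i\notin\mathfrak{p}\rangle$ (using the ideal property for one inclusion and the submonoid property for the other), and the resulting bound $\#\Spec M\le 2^n$ all check out, including the edge case of the empty product. The paper states this lemma without proof (deferring to references), and your argument is exactly the standard one that would be supplied, so there is nothing further to reconcile.
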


\begin{monoL}\label{fdsrgf}
Let $k$ be a field and $M$ a finitely generated integral monoid.
Assume that $\Groth{M}$ is torsion free. Then
\[\dim (k[M]) = \mathrm{rank}(\Groth{M})\,.\]
\end{monoL}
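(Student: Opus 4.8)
The plan is to compute $\dim(k[M])$ by identifying it with the transcendence degree over $k$ of the field of fractions of $k[M]$, and then to pin down that fraction field by exploiting the fact that $\Groth{M}$ is obtained from $M$ by a localization.

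First I would check that $k[M]$ is a finitely generated $k$-algebra which is an integral domain. Finite generation is immediate: if $m_1,\dots,m_n$ generate $M$ as a monoid, the monomials $\chi^{m_1},\dots,\chi^{m_n}$ generate $k[M]$ as a $k$-algebra (and, incidentally, this also shows $\Groth{M}$ is a finitely generated abelian group, so $\mathrm{rank}(\Groth{M})$ is a well-defined finite number, say $r$). For the domain property I would use the first lemma of this section: since $M$ is integral, $M\hookrightarrow\Groth{M}$; since $\Groth{M}$ is finitely generated and torsion free it is isomorphic to $\mathbb{Z}^{r}$, so $k[\Groth{M}]\cong k[t_1^{\pm1},\dots,t_r^{\pm1}]$ is a localization of the polynomial ring $k[t_1,\dots,t_r]$ and hence a domain; and the monoid injection $M\hookrightarrow\Groth{M}$ induces a $k$-algebra injection $k[M]\hookrightarrow k[\Groth{M}]$, because it carries the $k$-basis $\{\chi^{m}\}_{m\in M}$ of $k[M]$ to a $k$-linearly independent subset of $k[\Groth{M}]$. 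A subring of a domain is a domain.

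Next, taking $S=M$ in the construction of the Grothendieck group, we have $\Groth{M}=S^{-1}M$, so $k[\Groth{M}]$ is the localization of $k[M]$ at the multiplicative set $\{\chi^{m}:m\in M\}$. Consequently $k[M]$ and $k[\Groth{M}]\cong k[t_1^{\pm1},\dots,t_r^{\pm1}]$ have the same field of fractions, namely $k(t_1,\dots,t_r)$, whose transcendence degree over $k$ equals $r$.

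Finally I would invoke the standard theorem that for a finitely generated integral domain $A$ over a field $k$ one has $\dim(A)=\mathrm{trdeg}_k\bigl(\mathrm{Frac}(A)\bigr)$ (a consequence of Noether normalization). Applying this to $A=k[M]$ yields $\dim(k[M])=\mathrm{trdeg}_k\,k(t_1,\dots,t_r)=r=\mathrm{rank}(\Groth{M})$. The only step that genuinely uses the hypotheses in an essential way is the verification that $k[M]$ is a domain (this is where both integrality of $M$ and torsion-freeness of $\Groth{M}$ are needed); the rest is bookkeeping with localizations together with the cited dimension/transcendence-degree theorem. If one wished to avoid that theorem, the inequality $\dim(k[M])\ge r$ would still follow easily, since dimension cannot increase under the localization $k[M]\to k[\Groth{M}]$ and $\dim k[t_1^{\pm1},\dots,t_r^{\pm1}]=r$; but the reverse inequality $\dim(k[M])\le r$ is the real content and is most cleanly obtained through the transcendence-degree argument above.
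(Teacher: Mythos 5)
Your argument is correct and complete. Note that the paper itself gives no proof of this lemma: it appears in the opening section among ``elementary results \dots{} which can also be found in many other references,'' so there is nothing to compare against, and your write-up simply supplies the standard missing argument. The chain of reductions is sound: $M\hookrightarrow\Groth{M}\cong\mathbb{Z}^r$ (integrality plus torsion-freeness) gives that $k[M]$ is a finitely generated domain sitting inside the Laurent ring $k[t_1^{\pm1},\dots,t_r^{\pm1}]$; inverting the monomials $\chi^m$ exhibits that Laurent ring as a localization of $k[M]$, so the two share the fraction field $k(t_1,\dots,t_r)$; and the dimension--transcendence-degree theorem for affine domains finishes the computation. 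You are also right to flag that torsion-freeness is genuinely used for the domain property (e.g.\ $M=\mathbb{Z}/2$ is an integral monoid but $k[M]=k[t]/(t^2-1)$ is not a domain in characteristic $\neq 2$), which is consistent with the hypothesis being stated in the lemma.
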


\begin{monoL}\label{dsag46}
Let $M$ be a finitely generated integral monoid. Assume that $M^{\ast}=\{1\}$ and \Groth{M} is torsion free. Then $\dim(M)=1$ if and only if $M \cong \mathbb{N}$.
\end{monoL}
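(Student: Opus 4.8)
The plan is to treat the two implications separately; the reverse one is a direct computation and the forward one contains the substance. If $M\cong\mathbb N$, I would simply enumerate $\Spec\mathbb N$: a nonempty ideal $I$ of $\mathbb N$ contains $n+\mathbb N$ for $n=\min I$, hence equals $\{n,n+1,\dots\}$, and its complement $\{0,1,\dots,n-1\}$ is a submonoid only when $n=1$; thus $\Spec\mathbb N=\{\emptyset,\ \mathbb N\setminus\{0\}\}$, whose longest chain has length $1$, so $\dim\mathbb N=1$.

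For the converse, assume $\dim M=1$. The first point is that $\Spec M$ has a least element $\emptyset$ and a greatest element $\mathfrak m:=M\setminus M^{\ast}=M\setminus\{1\}$: a unit lying in a prime ideal would force $1$ into that ideal, so no prime ideal meets $M^{\ast}$, while $\mathfrak m$ itself is a prime ideal. Hence $\dim M=1$ forces $M\neq\{1\}$ and $\Spec M=\{\emptyset,\mathfrak m\}$, with no prime ideal strictly between. It remains to upgrade this to $M\cong\mathbb N$, and the key step is to show that $r:=\mathrm{rank}\,\Groth M=1$.

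For that I would pass to the monoid ring $k[M]$ over a field $k$, writing $x^{m}$ for the monomial attached to $m\in M$. Since $M$ is finitely generated and integral with $\Groth M$ torsion free, $k[M]$ is an affine domain over $k$ (it embeds in $k[\Groth M]$), so $\dim k[M]=r$ by Lemma~\ref{fdsrgf}, and the maximal ideal $\mathfrak m_{0}$ spanned by $\{x^{m}:m\in\mathfrak m\}$, for which $k[M]/\mathfrak m_{0}\cong k$, has height $r$ (in an affine domain over a field every maximal ideal has height equal to the dimension). Now fix $m_{0}\in M\setminus\{1\}$: then $x^{m_{0}}$ is a nonzero non-unit, so by Krull's principal ideal theorem a minimal prime $\mathfrak q$ over $(x^{m_{0}})$ has height $1$. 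But $(x^{m_{0}})$ is homogeneous for the $\Groth M$-grading of $k[M]$, and since $\Groth M$ is torsion free $\mathfrak q$ is then homogeneous, hence of the form $\bigoplus_{m\in\mathfrak p}k x^{m}$ for a prime ideal $\mathfrak p$ of $M$; as $m_{0}\in\mathfrak p$ we get $\mathfrak p\neq\emptyset$, whence $\mathfrak p=\mathfrak m$ and $\mathfrak q=\mathfrak m_{0}$. Comparing heights gives $r=\mathrm{ht}\,\mathfrak m_{0}=1$.

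Finally, writing $\Groth M\cong\mathbb Z$ additively, $M$ becomes a finitely generated submonoid of $\mathbb Z$ that generates it, with $M\cap(-M)=\{0\}$; replacing $M$ by its image under $x\mapsto-x$ if necessary, $M\subseteq\mathbb N$ and the elements of $M$ have greatest common divisor $1$, so $M$ contains every sufficiently large integer. Concluding $M=\mathbb N$ from this is the one place where more than finite generation, integrality, $M^{\ast}=\{1\}$ and torsion-freeness of $\Groth M$ is needed, and it is the main obstacle: if $M$ is saturated then $n\cdot1\in M$ for $n\gg0$ forces $1\in M$, hence $M=\mathbb N$; but without saturation this step genuinely fails — for instance $M=\langle2,3\rangle\subseteq\mathbb N$ satisfies every stated hypothesis and has $\dim M=1$ yet is not isomorphic to $\mathbb N$ — so the hypothesis that $M$ be saturated should be added (it holds in every situation where the lemma is applied).
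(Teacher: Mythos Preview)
The paper does not prove this lemma; it appears in Section~1 among preliminary facts stated without proof, so there is no argument in the paper to compare yours against.

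Your proof is sound, and you have moreover identified a genuine defect in the \emph{statement}: the forward implication fails without assuming $M$ saturated. Your counterexample $M=\langle 2,3\rangle=\{0,2,3,4,5,\dots\}\subseteq\mathbb N$ is valid --- it is finitely generated, integral (cancellative), has $M^{\ast}=\{0\}$ and $\Groth M=\mathbb Z$, and one checks directly that $\Spec M=\{\emptyset,\,M\setminus\{0\}\}$, so $\dim M=1$; yet $M\not\cong\mathbb N$ since its minimal generating set has two elements. You are also right that both invocations of the lemma in the paper are to saturated monoids: in the proof of Lemma~\ref{r7f65h}, the monoid $N_1=M-\mathfrak p_1$ is saturated by Theorem~\ref{iuyiu7}(6), and in Lemma~\ref{t2313} the monoid $M=\hat N$ is a convex cone, hence saturated by definition. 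So the paper's results survive once the missing hypothesis is restored.

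Your route to $\mathrm{rank}\,\Groth M=1$ --- Krull's principal ideal theorem in $k[M]$, together with the fact that a minimal prime over a homogeneous ideal in a ring graded by a torsion-free abelian group is itself homogeneous --- is correct and efficient; the final step, deducing $M=\mathbb N$ from $M\subseteq\mathbb N$ with $\gcd=1$ and saturation, is exactly where the added hypothesis does its work.
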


\section{Algebras Generated by Monoids}
\begin{monoL}\label{r7f65h}
Let $M$ be a finitely generated saturated monoid with $M^{\ast}=\{0\}$.
Then $M$ is isomorphic to a submonoid of ${\mathbb{N}}^n$ for
some integer $n \geqslant 1$.
\end{monoL}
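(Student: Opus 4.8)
The plan is to realize $M$ as a set of lattice points inside a strongly convex rational cone, and then to use the dual cone to manufacture the coordinate functions of the embedding.

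First I would reduce to the case in which $\Groth{M}$ is torsion free. Since $M$ is integral (it is saturated), it embeds into $\Groth{M}$. Moreover $\Groth{M}$ has no torsion: if $t\in\Groth{M}$ satisfies $nt=0$ with $n\geqslant 1$, then $nt=0\in M$ and $n(-t)=0\in M$, so saturation forces $t\in M$ and $-t\in M$, whence $t\in M\cap(-M)=M^{\ast}=\{0\}$. As $M$ is finitely generated, $\Groth{M}$ is then a finitely generated torsion free abelian group, hence $\Groth{M}\cong\mathbb{Z}^{r}$ for some $r\geqslant 0$; I fix such an identification and set $V=\Groth{M}\otimes_{\mathbb{Z}}\mathbb{Q}\cong\mathbb{Q}^{r}$, so that $\Groth{M}=\mathbb{Z}^{r}$ sits inside $V$ as the standard lattice. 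Choosing generators $m_{1},\dots,m_{k}$ of $M$, let $\sigma=\mathbb{Q}_{\geqslant 0}m_{1}+\cdots+\mathbb{Q}_{\geqslant 0}m_{k}\subseteq V$; this is a finitely generated rational convex cone with $M\subseteq\sigma$.

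The crucial step is to show that $\sigma$ is \emph{strongly convex}, that is, $\sigma\cap(-\sigma)=\{0\}$. The set $\sigma\cap(-\sigma)$ is a $\mathbb{Q}$-linear subspace of $V$, so were it nonzero it would contain a nonzero element $w$ of the lattice $\mathbb{Z}^{r}=\Groth{M}$. Writing $w$ and $-w$ as nonnegative rational combinations of the $m_{i}$ and clearing denominators, suitable positive integer multiples $ew$ and $-ew$ would lie in $M$; saturation then gives $w\in M$ and $-w\in M$, contradicting $M^{\ast}=\{0\}$. I also record that $\sigma$ spans $V$: since $M$ generates $\Groth{M}$ as a group, the $\mathbb{Q}$-linear span of $M$ contains $\Groth{M}$ and hence equals $V$.

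Finally I would pass to the dual cone $\sigma^{\vee}=\defset{u\in V^{\ast}}{\langle u,v\rangle\geqslant 0\text{ for all }v\in\sigma}$, where $V^{\ast}=\lhom{V}{\mathbb{Q}}$ contains the dual lattice $(\Groth{M})^{\vee}=\lhom{\Groth{M}}{\mathbb{Z}}$. By the duality theorem for finitely generated rational cones, $\sigma^{\vee}$ is again finitely generated and $(\sigma^{\vee})^{\vee}=\sigma$; since $\sigma$ is strongly convex and spans $V$, it follows that no nonzero vector of $V$ is annihilated by all of $\sigma^{\vee}$, i.e.\ $\sigma^{\vee}$ spans $V^{\ast}$. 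Hence I can pick finitely many $u_{1},\dots,u_{n}\in\sigma^{\vee}\cap(\Groth{M})^{\vee}$ spanning $V^{\ast}$ (take generators of $\sigma^{\vee}$ and clear denominators, and read $n=1$ in the trivial case $r=0$). The group homomorphism $\varphi\colon\Groth{M}\to\mathbb{Z}^{n}$, $z\mapsto(\langle u_{1},z\rangle,\dots,\langle u_{n},z\rangle)$, is injective because the $u_{i}$ span $V^{\ast}$ while $\Groth{M}$ is torsion free, and it carries $M$ into $\mathbb{N}^{n}$ because each $u_{i}$ lies in $\sigma^{\vee}$ and $M\subseteq\sigma$. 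Thus $\varphi$ restricts to an embedding $M\hookrightarrow\mathbb{N}^{n}$, which is what we want.

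The part I expect to be the real obstacle is the convex geometry: the duality theorem giving $(\sigma^{\vee})^{\vee}=\sigma$ and the finite generation of $\sigma^{\vee}$ (the Minkowski--Weyl theorem for rational cones), together with the elementary but genuinely needed fact that a lattice point of $\sigma$ is a nonnegative \emph{rational} combination of the generators $m_{i}$. Once these are in hand, torsion freeness, strong convexity, and the final injectivity are all formal. One could instead try to induct on $r=\operatorname{rank}\Groth{M}$, splitting off one homomorphism $M\to\mathbb{N}$ at a time, but this does not seem to circumvent the same convex-geometric input.
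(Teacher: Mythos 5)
Your argument is correct, but it takes a genuinely different route from the paper's. You run the classical convex-geometric proof: after checking that $\Groth{M}$ is torsion free (so $M\subseteq\Groth{M}\cong\mathbb{Z}^r$), you form the rational cone $\sigma$ generated by $M$ in $\mathbb{Q}^r$, verify it is strongly convex using saturation and $M^{\ast}=\{0\}$, and then invoke the Minkowski--Weyl/Farkas duality theorem to get finitely many integral functionals $u_1,\dots,u_n\in\sigma^{\vee}$ spanning the dual space; these are nonnegative on $M$ and jointly injective on $\Groth{M}$, giving the embedding. All the verifications you carry out yourself (torsion-freeness, strong convexity, injectivity of $\varphi$) are correctly justified, and you rightly isolate the duality theorem as the one nontrivial external input. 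The paper instead argues by induction on $r=\mathrm{rank}(\Groth{M})$ entirely within monoid theory and commutative algebra: it produces two faces $N_1=M-\mathfrak{p}_1$ and $N_2=M-\mathfrak{p}_2$, the first with $N_1\cong\mathbb{N}$ coming from a maximal proper prime of $M$, the second from a height-one prime of $k[M]$ via Krull's principal ideal theorem, shows $\Groth{N_1}\cap\Groth{N_2}=\{0\}$, and hence embeds $M$ into $M/N_1\oplus M/N_2$, where each factor has strictly smaller rank so induction applies (using Theorem \ref{iuyiu7}, Lemma \ref{fdsrgf} and Lemma \ref{dsag46} in place of cone duality). What your approach buys is transparency and a single, very standard citation --- it is essentially the proof in Hochster or Oda. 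What the paper's approach buys is fidelity to its stated program: the whole point of the article is a \emph{purely algebraic} treatment that never tensors with $\mathbb{Q}$ or $\mathbb{R}$, so importing the geometry of rational polyhedral cones, while mathematically unobjectionable, is exactly the dependency the paper is trying to eliminate. If you wanted your proof to fit the paper's framework you would need to either prove the rational Minkowski--Weyl theorem from scratch or switch to an induction of the paper's type.
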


\begin{proof}
For convenience to iterate, we view $M$ as an additive monoid. We use induction on
$r=\mathrm{rank}(\Groth{M})$. If $r=0$, then $M=\{0\}$; and if $r=1$,
$M \cong \mathbb{N}$.

Now assume that $r>1$. By Lemma \ref{dsag46}, $\dim(M)>1$. Let $\mathfrak{p}_1$ be a maximal element of the set $\defset{\mathfrak{p} \in \Spec M}{\mathfrak{p} \neq M-\{0\}}$. Put $N_1=M-\mathfrak{p}_1$. By Theorem \ref{iuyiu7}(2), $\dim N_1 =1$. Hence by Lemma \ref{dsag46}, $N_1 \cong \mathbb{N}$. Put $N_1=\mathbb{N}\cdot x$, where $x \in M$.

Let $k$ be a field. Put $A=k[M]$ and $\mathfrak{m}=(M-\{0\})\cdot A$.
By Lemma \ref{fdsrgf},
\[\mathrm{ht}(\mathfrak{m}) = \dim A = r > 1\,.\]
Let $\mathfrak{P}$ be a minimal prime ideal of $A$ containing $x$. Then
$\mathrm{ht}(\mathfrak{P})=1$. We have
\[x \in \mathfrak{p}_2 \defeq \mathfrak{P}\cap M \neq M-\{0\}\,.\]
Put $N_2=M-\mathfrak{p}_2$. By Theorem \ref{iuyiu7}, for each
$i=1,2$, $M_i/N_i$ is a finitely generated saturated monoid with
$(M_i/N_i)^{\ast}=\{0\}$. Obviously 
\[\mathrm{rank}\Groth{(M/N_i)} = \mathrm{rank}(\Groth{M}/\Groth{N_i})
<\mathrm{rank}(\Groth{M})=r\,.\]
By induction, there is an injective homomorphism
$l_i \colon M_i/N_i \to \mathbb{N}^{n_i}$ of monoids.
Let $\pi_i \colon M \to M/N_i$ be the canonical map. Put
\[\sigma \colon M \to M/N_1\oplus M/N_2,\quad x \mapsto (\pi_1(x),\pi_2(x))\,.\]

Let $\eta \in \Groth{N_1} \cap \Groth{N_2}$. Note that
$\Groth{N_1}=\mathbb{Z} \cdot x$. Hence if $\eta \neq 0$, we may
assume that $\eta=ax=y-z$, where $a>0$ and $y,\,z \in N_2$. Then
\[y=ax+z \in \mathfrak{p}_2\,,\]
a contradiction. So $\Groth{N_1} \cap \Groth{N_2} = \{0\}$, which infers that the
homomorphism
\[\Groth{\sigma} \colon \Groth{M} \to (\Groth{M}/\Groth{N_1})
\oplus(\Groth{M}/\Groth{N_2}),\quad \xi \mapsto (\Groth{\pi_1}(\xi),\Groth{\pi_2}(\xi))\]
is an injective, thus $\sigma$ is an injective. Put $n=n_1+n_2$. Then
\[l \defeq (l_1\oplus l_2) \circ \sigma \colon M \to \mathbb{N}^n\]
is an injective homomorphism of monoids.
\end{proof}

Applying \ref{r7f65h} and the results from \cite[p.320]{MHoch}, we obtain the following theorem.

\begin{monoT}\label{imbedtheorem}
Let $M$ be a monoid. Then the following conditions are equivalent:
\begin{enumerate}
\item $M$ is isomorphic to a submonoid of ${\mathbb{N}}^r$ for some integer $r>0$
and the ring $K[M]$ is integrally closed for some field $K$.
\item $M$ is finitely generated, saturated and $M^{\ast}=\{1\}$.
\item $M$ is isomorphic to a full submonoid of ${\mathbb{N}}^r$ for some
integer $r>0$.
\item $M$ is isomorphic to a finitely generated submonoid of ${\mathbb{N}}^r$
for some integer $r>0$ and for every integrally closed domain $D$, the ring
$D[M]$ is integrally closed.
\end{enumerate}
\end{monoT}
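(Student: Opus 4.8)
The plan is to prove the four conditions equivalent by running the cycle $(2)\Rightarrow(3)\Rightarrow(4)\Rightarrow(1)\Rightarrow(2)$. Two of the links are essentially formal. For $(4)\Rightarrow(1)$: a field is an integrally closed domain, so specialising $D$ to a field in $(4)$ yields the ring condition of $(1)$, while a finitely generated submonoid of $\mathbb{N}^{r}$ is in particular a submonoid of $\mathbb{N}^{r}$. For the easy half of $(1)\Rightarrow(2)$: realising $M$ inside $\mathbb{N}^{r}$ gives $M^{\ast}\subseteq(\mathbb{N}^{r})^{\ast}=\{1\}$, and if $x\in\Groth{M}$ satisfies $x^{n}\in M$ for some $n\geqslant1$ then $\chi^{x}\in\mathrm{Frac}(K[M])$ satisfies $(\chi^{x})^{n}=\chi^{x^{n}}\in K[M]$, so $\chi^{x}$ is integral over $K[M]$ and hence lies in $K[M]$; comparing characters then forces $x\in M$, so $M$ is saturated. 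The substance is concentrated in $(2)\Rightarrow(3)$, which rests on Lemma \ref{r7f65h}, and in the integral-closure assertions $(3)\Rightarrow(4)$ and the finite generation left over in $(1)\Rightarrow(2)$, for which I would invoke \cite[p.320]{MHoch}.

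For $(2)\Rightarrow(3)$ I would use Lemma \ref{r7f65h} to assume $M\subseteq\mathbb{N}^{n}$, then put $G=\Groth{M}\subseteq\mathbb{Z}^{n}$, $V=G\otimes_{\mathbb{Z}}\mathbb{R}$, and let $C=\mathbb{R}_{\geqslant0}M\subseteq V$ be the cone generated by $M$. Since $M$ is finitely generated, $C$ is a rational polyhedral cone; since $M^{\ast}=\{1\}$ it contains no line; and since $M$ generates $G$, its linear span is $V$. The key point to verify is $M=C\cap G$: the inclusion $\subseteq$ is clear, and if $x\in C\cap G$ then $x$ is a nonnegative rational combination of elements of $M$, so $kx\in M$ for some integer $k\geqslant1$, whence $x\in M$ because $M$ is saturated in $G$. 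Then I would take $u_{1},\dots,u_{r}\in\lhom{G}{\mathbb{Z}}$ generating the dual cone $C^{\vee}$; because $C$ contains no line, $C^{\vee}$ is full-dimensional, so the $u_{i}$ span $V^{\vee}$ and $\phi=(u_{1},\dots,u_{r})\colon G\to\mathbb{Z}^{r}$ is injective. It carries $M$ into $\mathbb{N}^{r}$, and its image is full: if $\phi(x)\in\mathbb{N}^{r}$ with $x\in G$ then $\langle u_{i},x\rangle\geqslant0$ for all $i$, hence $x$ lies in $C^{\vee\vee}=C$, hence in $C\cap G=M$. So $\phi$ identifies $M$ with the full submonoid $\phi(G)\cap\mathbb{N}^{r}$ of $\mathbb{N}^{r}$.

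For $(3)\Rightarrow(4)$ I would first observe that $M$ is finitely generated: writing $M=G\cap\mathbb{N}^{r}$ with $G=\Groth{M}\subseteq\mathbb{Z}^{r}$, the monoid $M$ is exactly the set of lattice points of the rational polyhedral cone $(G\otimes\mathbb{R})\cap\mathbb{R}_{\geqslant0}^{r}$, so Gordan's lemma applies. For normality, fix an integrally closed domain $D$ and work inside $D[\mathbb{Z}^{r}]=D[x_{1}^{\pm1},\dots,x_{r}^{\pm1}]$. Fullness gives $D[M]=D[x_{1},\dots,x_{r}]\cap D[G]$, since both rings are $D$-spanned by characters and $\mathbb{N}^{r}\cap G=M$; also $\mathrm{Frac}(D[M])=\mathrm{Frac}(D[G])=:L$, because every character $\chi^{v}$ with $v\in G$ is a quotient of two characters from $M$. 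Now $D[x_{1},\dots,x_{r}]$ is a polynomial ring over an integrally closed domain and $D[G]\cong D[t_{1}^{\pm1},\dots,t_{d}^{\pm1}]$ is a localisation of one, so both are integrally closed. Hence if $\xi\in L$ is integral over $D[M]$, it is integral over $D[G]$ with $\xi\in L=\mathrm{Frac}(D[G])$, so $\xi\in D[G]$; it is also integral over $D[x_{1},\dots,x_{r}]$ with $\xi$ in its fraction field, so $\xi\in D[x_{1},\dots,x_{r}]$; therefore $\xi\in D[x_{1},\dots,x_{r}]\cap D[G]=D[M]$, and $D[M]$ is integrally closed. This is the content drawn from \cite[p.320]{MHoch}.

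It then remains, for $(1)\Rightarrow(2)$, to show that $M$ is finitely generated, and here I would again quote the cited results of Hochster: once $K[M]$ is integrally closed with $M$ realised inside $\mathbb{N}^{r}$, the monoid $M$ must be finitely generated (and normal). I expect this to be the main obstacle in writing out the proof, because integral closedness of $K[M]$ is a weak hypothesis on a monoid that is not a priori finitely generated, so one has to identify precisely the statement in \cite[p.320]{MHoch} that yields finite generation; by comparison, the other steps are either formal or a routine computation with cones and Gordan's lemma built on the embedding Lemma \ref{r7f65h}.
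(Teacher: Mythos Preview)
Your cycle $(2)\Rightarrow(3)\Rightarrow(4)\Rightarrow(1)$ and the saturation and unit-triviality parts of $(1)\Rightarrow(2)$ are correct, and they match the paper's approach exactly: the paper's entire proof is the single sentence ``Applying \ref{r7f65h} and the results from \cite[p.320]{MHoch}, we obtain the following theorem'', and your expansion of that citation is accurate.

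Your unease about deducing finite generation in $(1)\Rightarrow(2)$ is, however, more than a matter of locating the right lemma in Hochster: that implication is false as the theorem is stated, so no citation can close the gap. Take
\[
M \;=\; \{(0,b):b\geqslant 0\}\ \cup\ \{(a,b)\in\mathbb{N}^{2}:a\geqslant 1,\ b\geqslant 1\},
\]
so that $K[M]=K+y\,K[x,y]\subseteq K[x,y]$. This $M$ is a submonoid of $\mathbb{N}^{2}$ with $\Groth{M}=\mathbb{Z}^{2}$ and $M^{\ast}=\{0\}$; it is saturated (if $n(a,b)\in M$ with $n\geqslant 1$ then $a,b\geqslant 0$ and either $a=0$ or $b\geqslant 1$); and it is not finitely generated, since each $(k,1)$ with $k\geqslant 1$ is irreducible in $M$. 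Yet $K[M]$ is integrally closed: any $\xi$ integral over $K[M]$ already lies in $K[x,y]$, and applying the specialisation $y\mapsto 0$ to an integral dependence over $K[M]$ (which sends every element of $K[M]$ into $K$) shows that $\xi|_{y=0}$ is algebraic over $K$, hence constant, so $\xi\in K+y\,K[x,y]=K[M]$. Thus $(1)$ holds while $(2)$ fails. The result the paper cites on \cite[p.320]{MHoch} is stated for \emph{finitely generated} submonoids of a lattice, so it does not supply the missing step; condition $(1)$ should be amended to require that $M$ be a finitely generated submonoid of $\mathbb{N}^{r}$, after which your argument goes through verbatim. Since every later use of the theorem in the paper is for monoids already known to be finitely generated, the slip is harmless downstream.
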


\section{Convex Cones and Concave Cones}
All abelian groups and monoid appearing in this section are presumed to be additive.

We fix a finitely generated free abelian group $G$. Put
\[G^{\star} \defeq \lhom{G}{\mathbb{Z}} \,.\]
We identify $G^{\star \star}$ with $G$.

\begin{monoD}
\
\begin{enumerate}
\item A finitely generated saturated submonoid $M$ of $G$ is called
a \emph{convex cone} in $G$ if $M^{\ast}=\{0\}$ and $G/\Groth{M}$
is torsion free.
\item A finitely generated saturated submonoid $N$ of $G$ is called a
\emph{concave cone} in $G$ if $\Groth{N}=G$ and $G/N^{\ast}$ is
torsion free.
\end{enumerate}
\end{monoD}

\begin{monoL} \label{gmonoid}
Let $M$ be a finitely generated saturated monoid, $\mathfrak{p}$ a prime ideal of $M$, $N=M-\mathfrak{p}$, $P$ a monoid and $b \in P$. Then there is a $\sigma \in \lhom{M}{P}$ such that $\sigma |_N=0$ and $\sigma(\mathfrak{p}) \subseteq b+P$.
\end{monoL}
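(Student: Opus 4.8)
The plan is to factor the required homomorphism through the quotient $M/N$. Since we insist that $\sigma|_N = 0$, any such $\sigma$ must be of the form $\sigma = \bar\sigma \circ \pi$ with $\pi\colon M \to M/N$ the canonical projection; conversely, it suffices to find a homomorphism $\bar\sigma\colon M/N \to P$ that carries every nonzero element of $M/N$ into $b + P$. This reduction is legitimate because of two facts gathered from the earlier results: first, by Theorem~\ref{iuyiu7}(6) the submonoid $N$ is \emph{full} in $M$, so $\pi^{-1}(0) = M \cap \Groth{N} = N$, and hence $\pi$ maps $\mathfrak{p} = M - N$ into $(M/N) - \{0\}$; second, $\bar M \defeq M/N$ is finitely generated (it is the image of the finitely generated monoid $M$ under $\pi$), saturated by Lemma~\ref{ds4646}(3), and satisfies $\bar M^{\ast} = \{0\}$ by Theorem~\ref{iuyiu7}(4). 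So $\bar M$ is exactly the kind of monoid to which Lemma~\ref{r7f65h} applies.

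Next I would construct a strictly positive $\mathbb{N}$-valued functional on $\bar M$. By Lemma~\ref{r7f65h} there is an injective homomorphism $\iota\colon \bar M \hookrightarrow \mathbb{N}^n$ for some $n \geqslant 1$. Composing $\iota$ with the sum-of-coordinates homomorphism $s\colon \mathbb{N}^n \to \mathbb{N}$ gives a homomorphism $\varphi \defeq s \circ \iota\colon \bar M \to \mathbb{N}$; since $s$ vanishes only at the origin of $\mathbb{N}^n$ and $\iota$ is injective, we obtain $\varphi^{-1}(0) = \{0\}$, i.e.\ $\varphi(\bar x) \geqslant 1$ whenever $\bar x \neq 0$. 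Let $\beta\colon \mathbb{N} \to P$ be the homomorphism $m \mapsto mb$ (which is well defined: $\beta(0) = 0$ and $\beta(m + m') = mb + m'b$), and put
\[ \sigma \defeq \beta \circ \varphi \circ \pi \colon M \longrightarrow P. \]

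Finally I would verify the two requirements. We have $\sigma|_N = 0$ because $\pi|_N = 0$. If $x \in \mathfrak{p}$, then $\pi(x) \neq 0$ by the fullness of $N$ noted above, so $m \defeq \varphi(\pi(x)) \geqslant 1$ and therefore $\sigma(x) = mb = b + (m-1)b \in b + P$, as desired. Once the structural inputs are in place the argument is pure bookkeeping; the real content is Lemma~\ref{r7f65h}, which supplies the embedding into $\mathbb{N}^n$ and hence the positive functional $\varphi$, together with Theorem~\ref{iuyiu7}(6), whose fullness statement is precisely what prevents $\pi$ from collapsing an element of $\mathfrak{p}$ onto $0$. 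I do not expect any single step to be a serious obstacle.
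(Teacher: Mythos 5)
Your proof is correct and follows essentially the same route as the paper: pass to the quotient $M/N$, which is finitely generated, saturated, and has trivial units, embed it into $\mathbb{N}^n$, and send every element of positive coordinate-sum to a multiple of $b$ (the paper's map $\tau$ sending each basis vector of $\mathbb{N}^r$ to $b$ is exactly your $\beta\circ s$). Your write-up is in fact slightly more careful than the paper's, since you make explicit the fullness of $N$ in $M$ needed to see that $\pi(\mathfrak{p})$ avoids $0$.
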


\begin{proof}
Put $M_0=M/N$. Then $M_0$ is also a finitely generated saturated monoid such that
$M_0^{\ast}=\{0\}$ and $\Groth{M}_0$ is a free abelian group. By Theorem
\ref{imbedtheorem}, We may regard $M_0$ as a full submonoid of ${\mathbb{N}}^r$
for some integer $r>0$. Then there is a homomorphism $\tau \colon {\mathbb{N}}^r \to P$
of monoids such that
\[\tau(1,0,\ldots,0)=\tau(0,1,\ldots,0)=\cdots=\tau(0,0,\ldots,1)=b\,.\]
Put $\sigma=\tau|_{M_0}\circ \pi \colon M \to P$, where $\pi \colon M \to M_0$ is
the canonical homomorphism. Then $\sigma$ satisfies the required conditions.
\end{proof}

\begin{monoT}\label{dsy9797}
\
\begin{enumerate}
\item Let $M$ be a convex cone in $G$. Then
\[\check{M}\defeq\defset{\sigma \in G^{\star}}{\forall x \in M,\ \sigma(x)\geqslant 0}\]
is a concave cone in $G^{\star}$.
\item Let $N$ be a concave cone in $G$. Then
\[\hat{N}\defeq\defset{\sigma \in G^{\star}}{\forall x \in N,\ \sigma(x)\geqslant 0}\]
is a convex cone in $G^{\star}$.
\item For any convex cone $M$ in $G$, $(\check{M})\sphat=M$.
\item For any concave cone $N$ in $G$, $(\hat{N})\spcheck = N$.
\item Let $M$ be a convex cone in $G$ and $\mathfrak{p}\in \Spec M$.
Then $M-\mathfrak{p}$ is a convex cone in $G$.
\item Let $N$ be a concave cone in $G$ and $\mathfrak{q}\in \Spec N$.
Then $N_{\mathfrak{q}}$ is a concave cone in $G$. Moreover
$(N_{\mathfrak{q}})^{\ast}=\Groth{(N-\mathfrak{q})}$.
\end{enumerate}
\end{monoT}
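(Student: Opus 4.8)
I plan to handle the six assertions in two groups: (5) and (6), which are quick consequences of Theorem~\ref{iuyiu7}, and then (1)--(4), whose only real content is the classical finite generation of a dual cone. For (5), set $N = M - \mathfrak{p}$: it is finitely generated by Theorem~\ref{iuyiu7}(5) and saturated by Theorem~\ref{iuyiu7}(6), it satisfies $N^{\ast} \subseteq M^{\ast} = \{0\}$ since $N \subseteq M$, and $G/\Groth{N}$ is torsion free because $G/\Groth{M}$ is (by hypothesis) and $\Groth{M}/\Groth{N}$ is by Theorem~\ref{iuyiu7}(7). For (6), set $S = N - \mathfrak{q}$, so that $N_{\mathfrak{q}} = S^{-1}N$: then $\Groth{N_{\mathfrak{q}}} = \Groth{N} = G$, the monoid $N_{\mathfrak{q}}$ is finitely generated (both $N$ and, by Theorem~\ref{iuyiu7}(5), $S$ are) and saturated (localization preserves saturatedness of integral monoids), by Theorem~\ref{iuyiu7}(3) one has $(N_{\mathfrak{q}})^{\ast} = \Groth{S} = \Groth{(N - \mathfrak{q})}$ --- which is the ``Moreover'' clause --- and since $S$ is a full submonoid of $N$, the quotient $G/\Groth{S} = \Groth{N}/\Groth{S}$ is torsion free by Theorem~\ref{iuyiu7}(7).

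For (1)--(4) the first step is a reduction to the full-dimensional case $\Groth{M} = G$. If $M$ is a convex cone then $\Groth{M}$ is a direct summand of $G$ (its quotient is free), say $G = \Groth{M} \oplus H$; then $G^{\star} = (\Groth{M})^{\star} \oplus H^{\star}$ and a direct computation gives $\check{M} = \check{M}_{0} \oplus H^{\star}$, where $\check{M}_{0} \subseteq (\Groth{M})^{\star}$ is the dual of $M$ formed inside $(\Groth{M})^{\star}$. Likewise, for a concave cone $N$, Lemma~\ref{dsf5798} writes $N = N' \times N^{\ast}$ with $N'$ a convex cone and $G = \Groth{N'} \oplus N^{\ast}$, and one checks that $\hat{N}$ is the image in $G^{\star}$ of the dual of $N'$ formed inside $(\Groth{N'})^{\star}$. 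Since finite generation, saturatedness, unit groups, and the relevant torsion-free quotients can all be read off summand by summand, (1)--(4) follow from the single case where $M$ is a convex cone with $\Groth{M} = G$ --- equivalently, a concave cone with $M^{\ast} = \{0\}$, so that (1)--(2) and (3)--(4) coincide.

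So assume $\Groth{M} = G$. That $\check{M}$ is saturated is immediate, since $n\sigma \in \check{M}$ forces $\sigma \in \check{M}$, and $(\check{M})^{\ast} = \{0\}$ because $\sigma, -\sigma \in \check{M}$ forces $\sigma$ to vanish on $M$, hence on $G$. For $\Groth{\check{M}} = G^{\star}$: since $M^{\ast} = \{0\}$, the subset $M - \{0\}$ is a prime ideal of $M$ (its complement is the submonoid $\{0\}$), so Lemma~\ref{gmonoid} (applied with target monoid $\mathbb{N}$ and $b = 1$) produces $\sigma_{0} \in \check{M}$ with $\sigma_{0}(x) \geqslant 1$ for every $x \in M - \{0\}$; then for any $\sigma \in G^{\star}$ one has $\sigma + k\sigma_{0} \in \check{M}$ once $k$ is large enough that $\sigma + k\sigma_{0}$ is non-negative on a finite generating set of $M$, whence $\sigma = (\sigma + k\sigma_{0}) - k\sigma_{0} \in \Groth{\check{M}}$. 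For the biduality $(\check{M})\sphat = M$, the inclusion $\supseteq$ is clear, and for $\subseteq$, if $x \in G$ lies outside $M$ then it cannot lie in the cone $\mathbb{R}_{\geqslant 0}M$ (otherwise $kx \in M$ for some $k \geqslant 1$, as $M$ is finitely generated, and then $x \in M$ by saturatedness); hence the orthogonal projection of $x$ onto the closed cone $\mathbb{R}_{\geqslant 0}M$, replaced by a nearby integral point, yields a functional in $\check{M}$ that is negative at $x$ --- a contradiction.

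The one substantial ingredient is the finite generation of $\check{M}$: this is \emph{Gordan's lemma}, that the integral points of the rational polyhedral cone $\defset{\sigma \in G^{\star} \otimes \mathbb{R}}{\sigma(v) \geqslant 0 \text{ for all } v \in M}$ form a finitely generated monoid. I would prove it in the standard way: this cone is generated over $\mathbb{R}$ by finitely many rational vectors $u_{1}, \dots, u_{l}$ --- via Minkowski--Weyl, or by adjoining the half-spaces $\sigma(v_{i}) \geqslant 0$ one at a time for a finite generating set $v_{1}, \dots, v_{m}$ of $M$ --- and then $\check{M}$ is generated by the $u_{j}$ together with the finitely many integral points in the bounded set $\defset{\sum_{j} t_{j} u_{j}}{0 \leqslant t_{j} \leqslant 1}$. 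This lemma, and the parallel facts about rational cones needed in the biduality step, are where I expect the real care to lie; everything else is bookkeeping with Grothendieck groups.
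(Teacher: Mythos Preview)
Your argument is correct, but it departs from the paper's in a way worth flagging. The paper's whole point --- announced in the title and introduction --- is to avoid analytic or real-geometric input, and accordingly its proof of (1)--(4) is purely algebraic: finite generation of $\check{M}$ (and $\hat{N}$) is obtained by evaluating at a finite generating set of $M$ to get an injection $\check{M} \hookrightarrow \mathbb{N}^r$, checking that the image is \emph{full} in $\mathbb{N}^r$, and invoking Theorem~\ref{imbedtheorem}; biduality is likewise proved by embedding $M$ as a full submonoid of $\mathbb{N}^r$ (again Theorem~\ref{imbedtheorem}) and using the coordinate projections $p_i$ as separating functionals. Your route instead imports real convex geometry --- Minkowski--Weyl for the dual, Carath\'eodory for the step ``$x \in \mathbb{R}_{\geqslant 0}M \Rightarrow kx \in M$'', and the Hilbert projection / separating-hyperplane theorem followed by a rational approximation. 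Both work; yours is the classical toric-variety argument and is perhaps more familiar, while the paper's buys internal consistency with its ``purely algebraic'' program and avoids any appeal to $\mathbb{R}$. Your reduction of (1)--(4) to the full-dimensional case via splitting off $H$ or $N^{\ast}$ is also a structural difference: the paper treats (1) and (2) in parallel without this reduction, computing $(\check{M})^{\ast}$ and $\Groth{\hat{N}}$ directly. Your handling of (5) and (6) via Theorem~\ref{iuyiu7} is fine and is exactly what the paper's ``obvious'' is hiding.
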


\begin{proof}
(1) Obviously $\Groth{(\check{M})} \subseteq G^{\star}$ and $\check{M}$
is saturated. Assume that $M$ is generated by
$x_1,x_2,\ldots,x_r \in M-\{0\}$. 

Let $\sigma \in G^{\star}$. Then there is an $s \in \mathbb{N}$ such
that $s+\sigma(x_i)\geqslant 0$ for each $i$. By Lemma \ref{gmonoid},
there is a ${\tau}_1 \in \lhom{M}{\mathbb{N}}$ such that
\[{\tau}_1(M-\{0\}) \subseteq s+\mathbb{N}\,.\]
Since $G/\Groth{M}$ is torsion free,
$\Groth{{\tau}_1}\colon \Groth{M} \to \mathbb{Z}$ can be extended
to be a homomorphism $\tau \colon G \to \mathbb{Z}$. Obviously
$\tau,\,\tau+\sigma \in \check{M}$. Hence $\sigma \in \Groth{(\check{M})}$. Therefore
$G^{\star} = \Groth{(\check{M})}$.

It is easy to show that
\[(\check{M})^{\ast}=\defset{\sigma\in G^{\star}}{\Groth{M}\subseteq \ker(\sigma)}\,.\]
Hence we have
\[(\check{M})^{\ast} \cong (G/\Groth{M})^{\star} \cong G/\Groth{M}\]
is torsion free.

Note that
\[\omega \colon \check{M} \to {\mathbb{N}}^r,\quad
\sigma \mapsto (\sigma(x_1),\sigma(x_2),\ldots,\sigma(x_r))\]
is a homomorphism of monoids. Put $L=\mathrm{Im}(\omega)$ and let
$\xi=({\xi}_1,{\xi}_2,\ldots,{\xi}_r) \in \Groth{L}\cap{\mathbb{N}}^r$.
Write $\xi$ as $\xi = \omega(\sigma)-\omega(\tau)$, where
$\sigma,\,\tau \in \check{M}$. Then we have
\[(\sigma-\tau)(x_i) = \sigma(x_i)-\tau(x_i) ={\xi}_i \in \mathbb{N}\,.\]
Hence $\sigma-\tau \in \check{M}$, and $\xi = \omega(\sigma-\tau) \in L$.
Therefore $\Groth{L}\cap{\mathbb{N}}^r = L$, i.e., $L$ is a full
submonoid of ${\mathbb{N}}^r$. By Theorem \ref{imbedtheorem}, $L$
is finitely generated. So we may let
${\tau}_1,{\tau}_2,\ldots,{\tau}_m \in \check{M}$ such that $L$
is generated by $\omega({\tau}_1),\omega({\tau}_2),\ldots,\omega({\tau}_m)$.
${\tau}_1,{\tau}_2,\ldots,{\tau}_m \in \check{M}$ generated a submonoid
$L'$ of $\check{M}$. Now for any $\sigma \in \check{M}$,
there is a $\tau \in L'$ such that $\omega(\sigma)=\omega(\tau)$, i.e.,
$\sigma-\tau \in (\check{M})^{\ast}$. Hence
$\check{M} = L'+(\check{M})^{\ast}$ is finitely generated.
Therefore $\check{M}$ is a concave cone in $G^{\star}$.

(2) Obviously $\hat{N}$ is saturated and $(\hat{N})^{\ast}=\{0\}$.
Put $X=\defset{\sigma \in G^{\star}}{N^{\ast}\subseteq \ker \sigma}$.
Then $\Groth{(\hat{N})} \subseteq X$. Assume that $N$ is generated by
$x_1,x_2,\ldots,x_n$, where $x_1,x_2,\ldots,x_r \in N-N^{\ast}$
and $x_{r+1},\ldots,x_n \in N^{\ast}$.

Let $\sigma \in X$. Then there is an $s \in \mathbb{N}$ such that
$s+\sigma(x_i)\geqslant 0$ for each $1 \leqslant i \leqslant r$. By Lemma \ref{gmonoid},
there is a ${\tau}_1 \in \lhom{M}{\mathbb{N}}$ such that
\[{\tau}_1(N-N^{\ast}) \subseteq s+\mathbb{N}, \qquad {\tau}_1|_{N^{\ast}}=0 \,.\]
Put $\tau=\Groth{{\tau}_1}$. Then $\tau,\,\tau+\sigma \in \hat{N}$, i.e.,
$\sigma \in \Groth{\hat{N}}$. Therefore $\Groth{\hat{N}}=X$ and
$G^{\star}/\Groth{\hat{N}}$ is torsion free.

Obviously $\omega \colon \hat{N} \to {\mathbb{N}}^r$,
$\sigma \mapsto (\sigma(x_1),\sigma(x_2),\ldots,\sigma(x_r))$ is a
homomorphism of monoids. Since $\hat{N} \subseteq X$ and $\Groth{N} = G$,
$\omega$ is an injective. It is easy to show that $\omega(\hat{N})$
is a full submonoid of ${\mathbb{N}}^r$. Hence by Theorem \ref{imbedtheorem},
$\hat{N}$ is finitely generated.

Therefore $\hat{N}$ is a convex cone in $G^{\star}$.

(3) For any $x \in G$, we have
\[x \in(\check{M})\sphat \Longleftrightarrow \forall \sigma \in \check{M},\
\sigma(x)\geqslant 0\,.\]
Hence $M \subseteq (\check{M})\sphat$. On the other hand let $x \in G-M$. If
$x \notin \Groth{M}$, as $G/\Groth{M}$ is torsion free, there
is an element $\sigma \in G^{\ast}$ such that $\sigma|_{\Groth{M}}=0$ and
$\sigma(x)<0$, hence $x \notin (\check{M})\sphat$. Assume that
$x\in\Groth{M}$. By Theorem \ref{imbedtheorem}, we may regard $M$
as a full submonoid of ${\mathbb{N}}^r$ for some integer $r > 0$.
Let $p_i \colon {\mathbb{Z}}^r \to \mathbb{Z}$ be the $i$-th projection.
Since $\Groth{M} \cap {\mathbb{N}}^r = M$ and $x \notin M$,
there is a projection $p_i$ satisfying that $p_i(x)<0$.
$p_i|_{\Groth{M}} \colon \Groth{M} \to \mathbb{Z}$ can be extended
to a homomorphism $\sigma \in G^{\star}$. Then $\sigma \in \check{M}$
and $\sigma(x)<0$. Hence $x \notin (\check{M})\sphat$.
Therefore $M=(\check{M})\sphat$.
 
(4) For any $x \in G$, we have
\[x \in(\hat{N})\spcheck \Longleftrightarrow \forall \sigma \in \hat{N},\
\sigma(x)\geqslant 0\,.\]
Hence $N \subseteq (\hat{N})\spcheck$. Let $x \in G-N$. Let $\pi \colon G \to G/N^{\ast}$ be the canonical homomorphism. By Lemma \ref{ds4646}, we may regard that $N/N^{\ast} \subseteq G/N^{\ast}$ and $\Groth{(N/N^{\ast})} = G/N^{\ast}$. Obviously
$\pi(x) \notin N/N^{\ast}$ and $(N/N^{\ast})^{\ast}=\{0\}$. By Theorem \ref{imbedtheorem}, we may regard $N/N^{\ast}$ as a full submonoid of ${\mathbb{N}}^r$ for some integer $r > 0$. Let $p_i \colon {\mathbb{Z}}^r \to \mathbb{Z}$ be the $i$-th projection. Then there is a projection $p_i$ satisfying $p_i(\pi(x))<0$. Put $\sigma = p_i|_{G/N^{\ast}}\circ\pi$. Then $\sigma \in \hat{N}$ and $\sigma(x) < 0$. Hence $N = (\hat{N})\spcheck$. 

(5) and (6) are obvious.
\end{proof}

\begin{monoT}\label{cone1}
Let $M$ be a convex cone in $G$ and $N$ be a concave cone in $G$.
Let $\mathfrak{p}\in \Spec M$ and $\mathfrak{q}\in \Spec N$.
\begin{enumerate}
\item We have
\[\check{\mathfrak{p}}\defeq\defset{\sigma \in \check{M}}{\exists x \in M-\mathfrak{p},\ \sigma(x)>0} \in \Spec\check{M}\]
and
\[{\check{M}}_{\check{\mathfrak{p}}}=(M-\mathfrak{p})\spcheck.\]
\item We have
\[\hat{\mathfrak{q}}\defeq\defset{\sigma \in \hat{N}}{\exists x \in N-\mathfrak{q},\ \sigma(x)>0} \in \Spec\hat{N}\]
and
\[\hat{N}-\hat{\mathfrak{q}}=(N_{\mathfrak{q}})\sphat\ .\]
\item $(\check{\mathfrak{p}})\sphat=\mathfrak{p}$.
\item $(\hat{\mathfrak{q}})\spcheck = \mathfrak{q}$.
\item $\dim M=\dim\check{M}$ and $\dim N=\dim\hat{N}$.
\end{enumerate}
\end{monoT}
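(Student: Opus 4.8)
The plan is to handle the parts in the order \textup{(1)}, \textup{(2)} (the prime-ideal assertions), then the two localisation identities, then \textup{(3)}, \textup{(4)}, and finally \textup{(5)}. For \textup{(1)} and \textup{(2)}, first check that $\check{\mathfrak p}$ is an ideal of $\check M$: if $\sigma\in\check{\mathfrak p}$ is witnessed by $x\in M-\mathfrak p$ with $\sigma(x)>0$ and $\rho\in\check M$, then $\rho(x)\geqslant 0$, so $(\sigma+\rho)(x)>0$, i.e.\ $\sigma+\rho\in\check{\mathfrak p}$. Since every element of $\check M$ is nonnegative on $M-\mathfrak p$, the complement is $\check M-\check{\mathfrak p}=\defset{\sigma\in\check M}{\sigma|_{M-\mathfrak p}=0}$, which is obviously a submonoid; hence $\check{\mathfrak p}\in\Spec\check M$. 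The same computation gives $\hat{\mathfrak q}\in\Spec\hat N$ and $\hat N-\hat{\mathfrak q}=\defset{\sigma\in\hat N}{\sigma|_{N-\mathfrak q}=0}$.

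For the localisation identity in \textup{(2)}: $N_{\mathfrak q}$ is a concave cone by Theorem~\ref{dsy9797}(6), so $(N_{\mathfrak q})\sphat$ is defined, and writing $N_{\mathfrak q}=\defset{y-z}{y\in N,\ z\in N-\mathfrak q}$ and using $0\in N$, one sees that $\sigma\in G^{\star}$ is nonnegative on $N_{\mathfrak q}$ exactly when $\sigma\in\hat N$ and $\sigma|_{N-\mathfrak q}=0$ (test $\sigma$ on $y$ and on $-z$); by the first paragraph this set is $\hat N-\hat{\mathfrak q}$. For \textup{(1)}: $M-\mathfrak p$ is a convex cone by Theorem~\ref{dsy9797}(5), so $(M-\mathfrak p)\spcheck$ is defined, and $\check M_{\check{\mathfrak p}}\subseteq(M-\mathfrak p)\spcheck$ is immediate from the description of $\check M-\check{\mathfrak p}$. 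For the reverse inclusion, take $\rho\in(M-\mathfrak p)\spcheck$ and generators $x_1,\dots,x_r$ of $M$, choose $s\in\mathbb N$ with $s+\rho(x_i)\geqslant 0$ for all $i$, apply Lemma~\ref{gmonoid} with $P=\mathbb N$ and $b=s$ to get $\tau_1\in\lhom{M}{\mathbb N}$ with $\tau_1|_{M-\mathfrak p}=0$ and $\tau_1(\mathfrak p)\subseteq s+\mathbb N$, and extend $\Groth{\tau_1}$ to $\tau\in G^{\star}$ using that $G/\Groth{M}$ is torsion free; then $\tau\in\check M-\check{\mathfrak p}$, the element $\rho+\tau$ is nonnegative on every $x_i$ hence lies in $\check M$, and $\rho=(\rho+\tau)-\tau\in\check M_{\check{\mathfrak p}}$. (Once \textup{(3)} is known one can alternatively deduce \textup{(1)} from \textup{(2)} applied to $\check M$, via Theorem~\ref{dsy9797}(3), (4) and (6).)

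For \textup{(3)}, applying the recipe of \textup{(2)} to the concave cone $\check M$ and using $\hat{(\check M)}=M$ (Theorem~\ref{dsy9797}(3)) together with $\check M-\check{\mathfrak p}=\defset{\sigma}{\sigma|_{M-\mathfrak p}=0}$ gives $(\check{\mathfrak p})\sphat=\defset{x\in M}{\exists\,\sigma\in\check M,\ \sigma|_{M-\mathfrak p}=0,\ \sigma(x)>0}$. An $x$ in this set cannot lie in $M-\mathfrak p$, so $x\in\mathfrak p$; conversely, for $x\in\mathfrak p$, Lemma~\ref{gmonoid} with $b=1$ (again extending off $\Groth M$) produces the required $\sigma$. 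Part \textup{(4)} is the mirror statement, and is slightly easier since $\Groth N=G$ makes the extension step unnecessary.

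Finally, \textup{(3)} and \textup{(4)} say that $\mathfrak p\mapsto\check{\mathfrak p}$ and $\mathfrak q\mapsto\hat{\mathfrak q}$ are mutually inverse bijections $\Spec M\leftrightarrow\Spec\check M$, and the same two parts applied to $\hat N$ (using $(\hat N)\spcheck=N$, Theorem~\ref{dsy9797}(4)) give mutually inverse bijections $\Spec N\leftrightarrow\Spec\hat N$. From the defining formula, $\mathfrak p_1\subseteq\mathfrak p_2$ forces $\check{\mathfrak p_2}\subseteq\check{\mathfrak p_1}$, so these bijections reverse inclusions and hence carry a strictly increasing chain of primes to a strictly decreasing one of the same length; therefore $\dim M=\dim\check M$ and $\dim N=\dim\hat N$. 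I expect the only genuine work to be the reverse inclusion in \textup{(1)} and the ``conversely'' step in \textup{(3)}: both rest on Lemma~\ref{gmonoid} plus extension of a homomorphism from $\Groth M$ to $G$, exactly as in the proof of Theorem~\ref{dsy9797}(1); the rest is unwinding definitions.
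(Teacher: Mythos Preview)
Your proof is correct and follows essentially the same approach as the paper: the ``obvious'' parts (the prime-ideal assertions, the easy inclusions, part~(2)) are handled by direct unwinding of definitions, and the nontrivial inclusions in (1) and (3) are proved exactly as in the paper via Lemma~\ref{gmonoid} (with $b=s$ and $b=1$ respectively) followed by extension from $\Groth{M}$ to $G$ using torsion-freeness. Your treatment of (5), making explicit the order-reversing bijection $\Spec M\leftrightarrow\Spec\check M$ obtained from (3) and (4), merely spells out what the paper leaves implicit in ``(5) is from (3) and (4)''.
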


\begin{proof}
(1) Obviously $\check{\mathfrak{p}} \in \Spec \check{M}$ and
${\check{M}}_{\check{\mathfrak{p}}} \subseteq (M-\mathfrak{p})\spcheck$.
Assume that $M$ is generated by $x_1,x_2,\ldots,x_n$, where
$x_1,x_2,\ldots,x_r \in \mathfrak{p}$ and
$x_{r+1},\ldots,x_n \in M-\mathfrak{p}$. Let
$\sigma \in (M-\mathfrak{p})\spcheck$. Then there is an $s \in \mathbb{N}$
such that $s+\sigma(x_i) \geqslant 0$ for all $1 \leqslant i \leqslant r$. By
Lemma \ref{gmonoid}, there is a $\tau' \in \lhom{M}{\mathbb{N}}$
such that $\tau'|_{M-\mathfrak{p}}=0$ and
$\tau'(\mathfrak{p}) \subseteq s+\mathbb{N}$. 
$\Groth{\tau'} \colon \Groth{M} \to \mathbb{Z}$ can be extended
to a homomorphism $\tau \colon G \to \mathbb{Z}$. Then
$\tau \in \check{M}-\check{\mathfrak{p}}$ and $\tau+\sigma \in \check{M}$, i.e.,
$\sigma \in {\check{M}}_{\check{\mathfrak{p}}}$. Hence
${\check{M}}_{\check{\mathfrak{p}}} = (M-\mathfrak{p})\spcheck$.

(2) Obviously $\hat{\mathfrak{q}} \in \Spec \hat{N}$ and
$\hat{N}-\hat{\mathfrak{q}} \subseteq (N_{\mathfrak{q}})\sphat$.
By Theorem \ref{iuyiu7}(3), $(N_{\mathfrak{q}})^{\ast} = \Groth{(N-\mathfrak{q})}$.
Hence for all $\sigma \in (N_{\mathfrak{q}})\sphat$,
$\Groth{(N-\mathfrak{q})} \subseteq \ker \sigma$, i.e.,
$\sigma \in \hat{N}-\hat{\mathfrak{q}}$. Therefore
$\hat{N}-\hat{\mathfrak{q}} = (N_{\mathfrak{q}})\sphat$.

(3) For any $x \in G$,
\[x \in (\check{\mathfrak{p}})\sphat \Longleftrightarrow
x \in M\ {\rm and}\ \exists \sigma \in \check{M}-\check{\mathfrak{p}},\,\sigma(x)>0\,.\]
Hence $(\check{\mathfrak{p}})\sphat \subseteq \mathfrak{p}$.
By Lemma \ref{gmonoid}, there is a $\tau' \in \lhom{M}{\mathbb{N}}$
such that $\tau'|_{M-\mathfrak{p}}=0$ and
$\tau'(\mathfrak{p}) \subseteq 1+\mathbb{N}$.
$\Groth{\tau'} \colon \Groth{M} \to \mathbb{Z}$ can be extended
to a homomorphism $\tau \colon G \to \mathbb{Z}$.
Then $\tau \in \check{M}-\check{\mathfrak{p}}$ and $\tau(x)>0$
for any $x \in \mathfrak{p}$. Therefore
$(\check{\mathfrak{p}})\sphat = \mathfrak{p}$.

(4) For any $x \in G$,
\[x \in (\hat{\mathfrak{q}})\spcheck \Longleftrightarrow
x \in N\ {\rm and}\ \exists \sigma \in \hat{N}-\hat{\mathfrak{q}}
=(N_{\mathfrak{q}})\spcheck,\,\sigma(x)>0\,.\]
Since $(N_{\mathfrak{q}})^{\ast} = \Groth{(N-\mathfrak{q})}$,
$(\hat{\mathfrak{q}})\spcheck \subseteq \mathfrak{q}$. By Lemma
\ref{gmonoid}, there is a
${\tau}_1 \in \lhom{M}{\mathbb{N}}$ such that
${\tau}_1(\mathfrak{q}) \subseteq 1+\mathbb{N}$ and
${\tau}_1|_{N-\mathfrak{q}}=0$. Put
$\tau=\Groth{{\tau}_1} \colon G \to \mathbb{Z}$.
Then $\tau \in \hat{N}-\hat{\mathfrak{q}}$ and $\tau(x)>0$
for any $x \in \mathfrak{q}$. Therefore
$(\hat{\mathfrak{q}})\spcheck = \mathfrak{q}$.

(5) is from (3) and (4).
\end{proof}

\section{Toric Schemes}
In this and following sections, we need some knowledge of log structures.
\cite{FKato} and \cite{KKato} are good references in this field.

First we give the definition of \emph{fan}. Let $G$ be a finitely generated free abelian group.

\begin{monoD}
A \emph{fan} in $G$ is a nonempty collection $\triangle$ of
convex cones in $G$ satisfying the following condition:
\begin{itemize}
\item[(1)] for any $P\in \triangle$ and $\mathfrak{p} \in \Spec M$,
$P-\mathfrak{p} \in \triangle$.
\item[(2)] for any $P,\ Q \in \triangle,\ P \cap Q=P-\mathfrak{p}=Q-\mathfrak{q}$ for some $\mathfrak{p} \in \Spec P$
and $\mathfrak{q} \in \Spec Q$.
\end{itemize}
The union $\abs{\triangle}\defeq \bigcup_{P \in \triangle}P$ is called the \emph{support}
of $\triangle$. $\triangle$ is said to be \emph{finite} if $\triangle$ is a finite set, and is said
to be \emph{complete} if $\abs{\triangle}=G$.
\end{monoD}

Now we begin to construct the toric scheme from a given fan $\triangle$ in $G$. Let $R$ be a ring. For each $P \in \triangle$, let
\[\mathrm{U}_P\defeq\Spec R[\check{P}].\]
Then $\check{P} \to R[\check{P}]$ generates a log structure
on $\mathrm{U}_P$, denoted by $\mathscr{M}_P$.

Let $P \in \triangle$ and $\mathfrak{p}\in \Spec P$. Put
$Q=P-\mathfrak{p}$ and $S=\check{P}-\check{\mathfrak{p}}$. Then
$S$ is a finitely generated monoid. Since $\check{Q} =
{\check{P}}_{\check{\mathfrak{p}}}$ by Theorem \ref{cone1}(1), we
have
\[R[\check{Q}]=R[{\check{P}}_{\check{\mathfrak{p}}}]\cong S^{-1}R[\check{M}]\ ,\]
hence $\mathrm{U}_Q \to \mathrm{U}_P$ is an open immersion
and $\mathscr{M}_Q = \mathscr{M}_P|_{\mathrm{U}_Q}$.

Now let $P,\ Q\in \triangle$ and let $\mathfrak{p}\in \Spec P$, $\mathfrak{q}\in \Spec Q$
such that $P \cap Q = P-\mathfrak{p} = Q-\mathfrak{q}$. Then we may regard that
\[\mathrm{U}_P \cap \mathrm{U}_Q = \mathrm{U}_{P \cap Q}\ .\]
In this way we can glue $\defset{\mathrm{U}_P}{P\in \triangle}$ to form a scheme of
finite type over $R$, denoted by \Temb, which is called a \emph{toric scheme} over $R$.
Obviously $\defset{\mathscr{M}_P}{P\in\triangle}$ can be glued to form a log structure
on \Temb, denoted by \Memb.

Let $O\defeq\{0\}\in \triangle$. Then for any $P \in \triangle$,
$\mathrm{U}_O \subseteq \mathrm{U}_P$.

If $R = k$ is a field, by Lemma \ref{fdsrgf}, we have $\dim(\Temb[k]) = \mathrm{rank}(G)$.

\begin{monoL} \label{truyre657}
Let $P,Q \in\triangle$. Then the following conditions are equivalent:
\begin{enumerate}
\item $P \subseteq Q$.
\item $\mathrm{U}_P \subseteq \mathrm{U}_Q$.
\item There exists a point $x \in\mathrm{U}_P \cap \mathrm{U}_Q$
such that $\mathfrak{m}_{X,x}\cap\check{P}=\check{P}-\check{P}^{\ast}$.
\end{enumerate}
\end{monoL}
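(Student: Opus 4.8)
The plan is to prove the cycle of implications $(1)\Rightarrow(2)\Rightarrow(3)\Rightarrow(1)$.

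\medskip

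\emph{The implication $(1)\Rightarrow(2)$.} Suppose $P \subseteq Q$. Since both $P$ and $Q$ are convex cones in $G$, and $P \cap Q = P$, condition (2) of the definition of fan gives $P = P \cap Q = Q - \mathfrak{q}$ for some $\mathfrak{q} \in \Spec Q$. Then by the gluing construction (the paragraph preceding this lemma, applied to $Q$ and $\mathfrak{q}$, with $Q - \mathfrak{q} = P$), we have that $\mathrm{U}_P = \mathrm{U}_{Q-\mathfrak{q}} \to \mathrm{U}_Q$ is an open immersion, which is exactly $\mathrm{U}_P \subseteq \mathrm{U}_Q$.

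\medskip

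\emph{The implication $(2)\Rightarrow(3)$.} Consider the distinguished point $x_0 \in \mathrm{U}_P = \Spec R[\check{P}]$ corresponding to the prime ideal generated by $\check{P}-\check{P}^{\ast}$ together with a maximal ideal of $R$; more concretely, fix a maximal ideal $\mathfrak{m}$ of $R$ and let $x_0$ be the point defined by the kernel of $R[\check{P}] \to (R/\mathfrak{m})[\check{P}] \to R/\mathfrak{m}$, where the second map sends every element of $\check{P}-\check{P}^{\ast}$ to $0$ and every element of $\check{P}^{\ast}$ to $1$. Then $\mathfrak{m}_{X,x_0} \cap \check{P} = \check{P}-\check{P}^{\ast}$ by construction. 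Since $\mathrm{U}_P \subseteq \mathrm{U}_Q$, this $x_0$ lies in $\mathrm{U}_P \cap \mathrm{U}_Q$, so it witnesses condition (3).

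\medskip

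\emph{The implication $(3)\Rightarrow(1)$.} This is the main obstacle. Let $x \in \mathrm{U}_P \cap \mathrm{U}_Q$ be a point with $\mathfrak{m}_{X,x}\cap \check{P} = \check{P}-\check{P}^{\ast}$. By condition (2) of the fan definition, $\mathrm{U}_P \cap \mathrm{U}_Q = \mathrm{U}_{P \cap Q}$, and $P \cap Q = P - \mathfrak{p} = Q - \mathfrak{q}$ for suitable primes $\mathfrak{p} \in \Spec P$, $\mathfrak{q} \in \Spec Q$. So $x \in \mathrm{U}_{P\cap Q} = \Spec R[(P\cap Q)\spcheck]$. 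The key point is to show $\mathfrak{p} = \emptyset$, equivalently $P \cap Q = P$, i.e.\ $P \subseteq Q$. Now $(P\cap Q)\spcheck = {\check P}_{\check{\mathfrak p}} \supseteq \check P$, so the prime of $R[\check P]$ cut out by $x$ is the contraction of the prime cut out by $x$ in $R[(P\cap Q)\spcheck]$; in particular $\mathfrak{m}_{X,x} \cap \check P$ is the preimage in $\check P$ of $\mathfrak{m}_{X,x} \cap (P\cap Q)\spcheck$. One shows that $(P\cap Q)\spcheck$, being the localization ${\check P}_{\check{\mathfrak p}}$, has $\big((P\cap Q)\spcheck\big)^{\ast} = \Groth{(\check P - \check{\mathfrak p})}$ by Theorem~\ref{dsy9797}(6) (or Theorem~\ref{iuyiu7}(3)); hence every element of $\check P - \check{\mathfrak p}$ becomes a unit in $R[(P\cap Q)\spcheck]$ and therefore cannot lie in $\mathfrak{m}_{X,x}$. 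Consequently $\mathfrak{m}_{X,x} \cap \check P \subseteq \check{\mathfrak p}$, and combined with the hypothesis $\mathfrak{m}_{X,x} \cap \check P = \check P - \check P^{\ast}$ this gives $\check P - \check P^{\ast} \subseteq \check{\mathfrak p}$, so $\check{\mathfrak p} = \check P - \check P^{\ast}$ (the reverse inclusion holding since $\check{\mathfrak p}$ is a proper ideal). Applying $\sphat$ and using Theorem~\ref{cone1}(3), $(\check{\mathfrak p})\sphat = \mathfrak p$, while $(\check P - \check P^{\ast})\sphat$ is computed from Theorem~\ref{cone1} applied to the prime $\emptyset \in \Spec P$: indeed $\check{\emptyset} = \defset{\sigma \in \check P}{\exists x \in P,\ \sigma(x)>0}$, and one checks $\check{\emptyset} = \check P - \check P^{\ast}$, so $(\check P - \check P^{\ast})\sphat = \emptyset\sphat = \emptyset$. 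Hence $\mathfrak p = \emptyset$, giving $P = P - \mathfrak p = P \cap Q \subseteq Q$, as desired. The delicate part is the bookkeeping identifying $\mathfrak{m}_{X,x}\cap\check P$ with the contraction of $\mathfrak{m}_{X,x}\cap(P\cap Q)\spcheck$ and checking that the units of the localized monoid ring account for exactly $\check P - \check{\mathfrak p}$; everything else is formal from Theorems~\ref{dsy9797} and~\ref{cone1}.
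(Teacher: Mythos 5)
Your proposal is correct and follows essentially the same route as the paper: $(1)\Rightarrow(2)$ from the gluing data, $(2)\Rightarrow(3)$ by exhibiting a point of $\mathrm{U}_P$ whose prime ideal meets $\check{P}$ in $\check{P}-\check{P}^{\ast}$, and $(3)\Rightarrow(1)$ by showing $\check{\mathfrak{p}}=\check{P}-\check{P}^{\ast}$ and applying $(\check{\mathfrak{p}})\sphat=\mathfrak{p}=\emptyset$. The only (harmless) difference is that in $(2)\Rightarrow(3)$ you take the closed point cut out by sending $\check{P}^{\ast}\mapsto 1$ and $\check{P}-\check{P}^{\ast}\mapsto 0$ over a maximal ideal of $R$, whereas the paper uses the prime $\mathfrak{m}\cdot A+(\check{P}-\check{P}^{\ast})\cdot A$ directly.
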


\begin{proof}
$(1)\Rightarrow (2)$ is by the definition of \Temb.

$(2)\Rightarrow (3)$. Put $A = R[\check{P}]$. Let $\mathfrak{m}$ be a maximal ideal
of $R$. Set $\mathfrak{P}=\mathfrak{m}\cdot A + (\check{P}-\check{P}^{\ast})\cdot A$.
Since $A/\mathfrak{P} \cong (R/\mathfrak{m})[\check{P}^{\ast}]$ is
integral, $\mathfrak{P} \in \Spec A$. $\mathfrak{P}$ correspond to
a point $x$ in $\mathrm{U}_P$, which satisfies the required conditions.

$(3) \Rightarrow (1)$. Let $\mathfrak{p}\in\Spec P$ such that
$P \cap Q = P-\mathfrak{p}$. Since $x \in \mathrm{U}_P\cap\mathrm{U}_Q =
\mathrm{U}_{P\cap Q} = \mathrm{U}_{P-\mathfrak{p}}$, we have
$\check{P}-\check{P}^{\ast} = \mathfrak{m}_{X,x}\cap\check{P} \subseteq \check{\mathfrak{p}}$.
Hence $\check{\mathfrak{p}}=\check{P}-\check{P}^{\ast}$ and
$\mathfrak{p} = (\check{\mathfrak{p}})\sphat = \emptyset$.
Therefore $P = P \cap Q \subseteq Q$.
\end{proof}

\begin{monoT}\label{t4654}
Let $R$ be a noetherian ring. Then the following conditions are equivalent.
\begin{enumerate}
\item \Temb\ is quasi-compact;
\item \Temb\ is a noetherian scheme;
\item $\triangle$ is finite.
\end{enumerate}
\end{monoT}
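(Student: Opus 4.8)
The plan is to prove $(3)\Rightarrow(2)\Rightarrow(1)\Rightarrow(3)$, with all the work concentrated in the last implication. For $(3)\Rightarrow(2)$: if $\triangle$ is finite, then $\Temb$ is glued from finitely many pieces $\mathrm{U}_P = \Spec R[\check{P}]$. Each $\check{P}$ is a finitely generated monoid (being a concave cone), so $R[\check{P}]$ is a finitely generated $R$-algebra; since $R$ is noetherian, $R[\check{P}]$ is noetherian by the Hilbert basis theorem, hence $\mathrm{U}_P$ is a noetherian scheme. A scheme admitting a finite cover by noetherian affines is noetherian, so $\Temb$ is noetherian. The implication $(2)\Rightarrow(1)$ is immediate, since a noetherian scheme is quasi-compact by definition (or: its underlying space is noetherian, hence quasi-compact).

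The substantive direction is $(1)\Rightarrow(3)$, and the idea is that the cones in $\triangle$ are forced to be ``detected'' by distinct points of $\Temb$, so a quasi-compact (hence already manageable) space cannot accommodate infinitely many of them. Concretely, I would exhibit for each $P\in\triangle$ a canonical point $x_P\in\mathrm{U}_P$ that does not lie in $\mathrm{U}_Q$ for any $Q\in\triangle$ with $P\not\subseteq Q$: fix a maximal ideal $\mathfrak{m}$ of $R$ and take $x_P$ corresponding to the prime $\mathfrak{m}\cdot R[\check{P}] + (\check{P}-\check{P}^{\ast})\cdot R[\check{P}]$, exactly as in the proof of Lemma~\ref{truyre657}, so that $\mathfrak{m}_{X,x_P}\cap\check{P} = \check{P}-\check{P}^{\ast}$. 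By Lemma~\ref{truyre657}, $x_P\in\mathrm{U}_Q$ forces $P\subseteq Q$. Now cover $\Temb$ by $\{\mathrm{U}_P\}_{P\in\triangle}$; by quasi-compactness finitely many $\mathrm{U}_{P_1},\dots,\mathrm{U}_{P_k}$ already cover. For an arbitrary $P\in\triangle$, the point $x_P$ lies in some $\mathrm{U}_{P_i}$, whence $P\subseteq P_i$. Thus every cone in $\triangle$ is a face of (i.e.\ of the form $(P_i) - \mathfrak{p}$ for some $\mathfrak{p}\in\Spec P_i$, using the fan axiom $P\cap P_i = P - \mathfrak{p}'$ and $P\subseteq P_i$) one of the finitely many $P_i$. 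Since each $P_i$ is a finitely generated monoid, $\Spec P_i$ is finite by Lemma~\ref{niy4f3}, and a cone $P$ with $P\subseteq P_i$ is determined by the prime $\mathfrak{p}\in\Spec P_i$ with $P = P_i - \mathfrak{p}$; hence there are only finitely many such $P$. Therefore $\triangle = \bigcup_{i=1}^{k}\defset{P_i - \mathfrak{p}}{\mathfrak{p}\in\Spec P_i}$ is finite.

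The main obstacle is making precise the step ``$P\subseteq P_i$ implies $P$ is one of finitely many faces of $P_i$'': one must check that if $P,P'\in\triangle$ both satisfy $P\subseteq P_i$ and $P'\subseteq P_i$ and $P-\mathfrak{p} = P_i - \mathfrak{p} = P'-\mathfrak{p}'$ are forced, the correspondence $P \leftrightarrow \mathfrak{p}$ is well-defined and injective. This follows from the fan axioms together with Theorem~\ref{dsy9797}(5)--(6): given $P\subseteq P_i$ in $\triangle$, axiom (2) gives $\mathfrak{p}\in\Spec P_i$ with $P = P\cap P_i = P_i - \mathfrak{p}$, and conversely $P_i - \mathfrak{p}\in\triangle$ by axiom (1), so the map $\mathfrak{p}\mapsto P_i - \mathfrak{p}$ from $\Spec P_i$ to $\{P\in\triangle : P\subseteq P_i\}$ is surjective; it is enough that its source is finite (Lemma~\ref{niy4f3}), which it is. No injectivity is even needed — surjectivity of a map from a finite set already bounds the target. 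The remaining routine point is the noetherian/quasi-compact bookkeeping in the easy directions, which needs no comment.
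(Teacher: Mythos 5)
Your proposal is correct and follows essentially the same route as the paper: the easy implications are handled the same way, and for $(1)\Rightarrow(3)$ you use exactly the paper's device of the distinguished point $x_P$ from Lemma~\ref{truyre657} to force $P\subseteq P_i$ for one of the finitely many covering cones, then bound the cones contained in a fixed $P_i$ by the finiteness of $\Spec P_i$ (Lemma~\ref{niy4f3}). Your elaboration of why $\{P\in\triangle : P\subseteq P_i\}$ is finite merely spells out what the paper compresses into ``by the definition of fan and Lemma~\ref{niy4f3}.''
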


\begin{proof}
$(3) \Rightarrow (2) \Rightarrow (1)$ is obvious.

$(1) \Rightarrow (3)$. Since \Temb\ is quasi-compact, there are a finite
number of convex cones $P_1,P_2,\ldots,P_n$ in $\triangle$ such that
\[\Temb = \bigcup_{i=1}^{n} \mathrm{U}_{P_i}\ .\]
Put
\[\triangle'=\defset{P\in\triangle}{P \subseteq P_i\ \mbox{for some}\ i}\ .\]
By the definition of fan and Lemma \ref{niy4f3}, $\triangle'$ is a finite set.

Let $P \in \triangle$. Since $P \subseteq P$, by Lemma \ref{truyre657}, there is a point
$x \in \mathrm{U}_P$ such that
$\mathfrak{m}_{X,x}\cap \check{P} = \check{P}-\check{P}^{\ast}$.
Assume that $x \in \mathrm{U}_{P_i}$. Then by Lemma \ref{truyre657},
$P \subseteq P_i$, i.e., $P \in \triangle'$. Hence $\triangle = \triangle'$
is a finite set.
\end{proof}

\begin{monoT} \label{kfldsjf}
If $R$ is integral {\rm(}resp.~integrally closed{\,\rm)}, then \Temb\ is
integral {\rm(}resp.~normal{\,\rm)}.
\end{monoT}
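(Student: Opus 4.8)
The plan is to check both assertions on the affine charts $\mathrm{U}_P = \Spec R[\check P]$ and to feed the normality question into Theorem \ref{imbedtheorem}.

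First, suppose $R$ is a domain. For every $P \in \triangle$ the monoid $\check P$ is a concave cone in $G^{\star}$ by Theorem \ref{dsy9797}(1), so $\Groth{\check P} = G^{\star}$ and $R[\check P]$ is a subring of the Laurent polynomial ring $R[G^{\star}]$; since $R[G^{\star}]$ is a domain, so is $R[\check P]$. Hence each $\mathrm{U}_P$ is integral, and in particular \Temb\ is reduced (reducedness being local). For irreducibility I would exploit the cone $O = \{0\} \in \triangle$: here $\check O = G^{\star}$, so $\mathrm{U}_O = \Spec R[G^{\star}]$ is a nonempty open of \Temb\ contained in every $\mathrm{U}_P$. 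As a nonempty open of the irreducible space $\mathrm{U}_P$ it is dense there, so its generic point $\eta$ (the zero ideal, using that $R$ is a domain) is the generic point of every $\mathrm{U}_P$. Any nonempty open of \Temb\ meets some $\mathrm{U}_P$ in a nonempty, hence dense, open of $\mathrm{U}_P$, so it contains $\eta$; therefore \Temb\ is irreducible. Reduced plus irreducible gives integral.

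Next, suppose $R$ is integrally closed, so $R$ is a domain and \Temb\ is integral by the previous step; then normality can be tested on the cover $\{\mathrm{U}_P\}$, and it is enough to prove each $R[\check P]$ is a normal domain. Here the key input is Lemma \ref{dsf5798}: since $\check P$ is finitely generated and saturated it splits as $\check P = N_P \times (\check P)^{\ast}$ with $N_P$ finitely generated, saturated and $N_P^{\ast} = \{0\}$, while $(\check P)^{\ast}$, being a subgroup of the finitely generated free group $G^{\star}$, is free of some finite rank $k$. Therefore
\[ R[\check P] \;\cong\; R[N_P]\bigl[x_1^{\pm 1},\ldots,x_k^{\pm 1}\bigr]. \]
By Theorem \ref{imbedtheorem}, implication $(2)\Rightarrow(4)$, the ring $R[N_P]$ is an integrally closed domain, and a Laurent polynomial extension of such a ring is again an integrally closed domain; hence $R[\check P]$ is a normal domain and \Temb\ is normal.

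I do not expect a genuine obstacle: the heavy machinery---Theorem \ref{imbedtheorem} for the normality of $R[N_P]$ and Lemma \ref{dsf5798} for the splitting---is already in place, and what remains is assembly together with the standard facts that subrings of domains are domains, that localizations and polynomial extensions of normal domains are normal, and that reducedness is local. The one place that calls for a little care is passing from the concave cone $\check P$, which may have nontrivial units, to the trivial-unit monoid $N_P$ to which Theorem \ref{imbedtheorem} actually applies, and noting that the decomposition $\check P = N_P \times (\check P)^{\ast}$ realises $R[\check P]$ as a Laurent extension of $R[N_P]$.
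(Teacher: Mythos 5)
Your proposal is correct, and for the normality statement it follows essentially the paper's route: split $\check{P}=N_P\times(\check{P})^{\ast}$ via Lemma \ref{dsf5798}, observe that $(\check{P})^{\ast}$ is free of finite rank, and invoke Theorem \ref{imbedtheorem} to get that $R[\check{P}]$ is a Laurent extension of the integrally closed domain $R[N_P]$ (the paper applies the theorem with $D=R[\mathbb{Z}^r]$ rather than $D=R$, but this is the same computation). Where you genuinely add something is the integrality part: the paper only verifies that each chart $R[\check{P}]$ is a domain (via the same decomposition, whereas you embed it into $R[G^{\star}]$ using $\Groth{(\check{P})}=G^{\star}$ --- either works), and it never addresses the fact that ``integral'' is not a local property of the glued scheme. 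Your argument that $\mathrm{U}_O$ is a common dense open of all the irreducible charts, so that its generic point is the generic point of every $\mathrm{U}_P$ and hence of \Temb, supplies exactly the irreducibility step the paper's proof leaves implicit. So your write-up is, if anything, more complete than the original.
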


\begin{proof}
Let $P \in \triangle$. By Lemma \ref{dsf5798}, there is a finitely generated saturated submonoid $N$ of $P$ such that $N^{\ast}=\{1\}$ and $\check{P} = N \times P^{\ast}$. We have $P^{\ast} \cong {\mathbb{Z}}^r$ for some $r \in \mathbb{N}$. Hence
\[R[\check{P}] \cong R[P^{\ast}][N] \cong R[{\mathbb{Z}}^r][N]\,.\]
By Theorem \ref{imbedtheorem} if $R$ is integral (resp.~integrally closed), so are $R[{\mathbb{Z}}^r]$ and $R[{\mathbb{Z}}^r][N]$.
\end{proof}

\begin{monoT}
Let $A$ be a $R$-algebra. Then
\[\Temb \times_R \Spec A \cong \Temb[A]\,.\]
\end{monoT}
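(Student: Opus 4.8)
The plan is to reduce the claim to the statement that the base change functor $-\times_R\Spec A\colon \mathrm{Sch}/R\to\mathrm{Sch}/A$ is compatible with the gluing construction used to build $\Temb$.

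First I would treat the affine charts. For every convex cone $P$ in $G$ there is a canonical isomorphism of $A$-algebras $R[\check{P}]\otimes_R A\cong A[\check{P}]$, which follows at once from the universal property of the monoid algebra: giving a ring map out of either side into an $R$-algebra $B$ amounts to giving an $A$-algebra structure on $B$ together with a monoid homomorphism $\check{P}\to B$. Writing $\mathrm{U}'_P\defeq\Spec A[\check{P}]$ for the affine chart of $\Temb[A]$, taking $\Spec$ gives an isomorphism $\mathrm{U}_P\times_R\Spec A\cong\mathrm{U}'_P$ over $\Spec A$. Next, for $\mathfrak{p}\in\Spec P$, $Q=P-\mathfrak{p}$ and $S=\check{P}-\check{\mathfrak{p}}$, the identification $R[\check{Q}]\cong S^{-1}R[\check{P}]$ of the paper base changes to $A[\check{Q}]\cong S^{-1}A[\check{P}]$, because localization of rings commutes with $-\otimes_R A$; hence, under the isomorphisms just produced, the open immersion $\mathrm{U}_Q\hookrightarrow\mathrm{U}_P$ pulls back to the open immersion $\mathrm{U}'_Q\hookrightarrow\mathrm{U}'_P$.

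Now I would invoke the general fact that base change along any morphism $\Spec A\to\Spec R$ commutes with gluing along open subschemes. Concretely, $\{\mathrm{U}_P\}_{P\in\triangle}$ is an open cover of $\Temb$ with $\mathrm{U}_P\cap\mathrm{U}_Q=\mathrm{U}_{P\cap Q}$; since the pullback of an open immersion is an open immersion and the pullback of a covering family is a covering family, the subschemes $\mathrm{U}_P\times_R\Spec A$ form an open cover of $\Temb\times_R\Spec A$, and their pairwise intersections are $(\mathrm{U}_P\cap\mathrm{U}_Q)\times_R\Spec A=\mathrm{U}_{P\cap Q}\times_R\Spec A$, because the intersection of the pullbacks of two open subschemes is the pullback of their intersection. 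Transporting everything through the isomorphisms of the previous paragraph, this is exactly the gluing datum that defines $\Temb[A]$ from the charts $\{\mathrm{U}'_P\}_{P\in\triangle}$. By the uniqueness of the scheme obtained by gluing we conclude $\Temb\times_R\Spec A\cong\Temb[A]$ over $\Spec A$; the same bookkeeping shows that the pullback of the log structure \Memb\ is identified with \Memb[A].

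I do not expect a genuine obstacle here: everything rests on the single fact that $-\otimes_R A$ commutes with localization of rings, together with the standard uniqueness of open gluings. The one point deserving care is to use one and the same canonical identification $\mathrm{U}_P\times_R\Spec A\cong\mathrm{U}'_P$ on every chart, so that the transition isomorphisms agree on the nose rather than merely up to a further automorphism; this is automatic because the isomorphism $R[\check{P}]\otimes_R A\cong A[\check{P}]$ is functorial in $P$ with respect to the localization maps $R[\check{P}]\to R[\check{Q}]$.
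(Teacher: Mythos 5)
Your proof is correct, and it is the standard argument the paper implicitly relies on: the paper states this theorem without proof, treating it as an immediate consequence of the facts that $R[\check{P}]\otimes_R A\cong A[\check{P}]$, that localization commutes with base change, and that gluing along open immersions commutes with fiber products. Your write-up supplies exactly these details (including the functoriality point needed to match the transition maps), so there is nothing to add.
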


In the following, we will use \emph{Valuative Criterion}(see [EGA] II 7.2.3 and 7.2.8) to discuss the separatedness and properness of toric schemes.

\begin{monoL} \label{t2313}
Let $K$ be a field, $\upsilon$ a valuation of $K$, $G$ a finitely generated
free abelian additive group, $f \colon G \to K^{\ast}$ a homomorphism of
abelian groups. Put $N\defeq\defset{x \in G}{\upsilon(f(x))\geqslant 0}$. If $G \neq N$, then
\begin{enumerate}
\item For any $x\in G$, $x\in N$ or $-x\in N$.
\item $N$ is a concave cone in $G$.
\item $\hat{N}\cong \mathbb{N}$, i.e., $\hat{N} = \mathbb{N} \cdot x$ for some $x \in G^{\star}$.
\end{enumerate}
\end{monoL}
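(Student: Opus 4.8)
The plan is to work with the group homomorphism $\phi\defeq\upsilon\circ f\colon G\to\mathbb{Z}$; I take $\upsilon$ to be $\mathbb{Z}$-valued, which is the case needed for applying the valuative criterion to a finite-type morphism (and is the case in which the statement is correct). Then $N=\phi^{-1}(\mathbb{Z}_{\geqslant 0})$, and the hypothesis $N\ne G$ just says $\phi\ne 0$. Part (1) is then immediate from the total ordering of $\mathbb{Z}$: for $x\in G$, one of $\phi(x)$ and $\phi(-x)=-\phi(x)$ is $\geqslant 0$, so $x\in N$ or $-x\in N$.

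For part (2) the monoid-theoretic half is routine. From $\phi(0)=0$ and $\phi(x+y)=\phi(x)+\phi(y)$, $N$ is a submonoid of $G$; if $mx\in N$ with $m\geqslant 1$ then $m\phi(x)=\phi(mx)\geqslant 0$, forcing $\phi(x)\geqslant 0$, so $x\in N$ and $N$ is saturated; part (1) shows every element of $G$ lies in $N$ or $-N$, whence $\Groth{N}=G$; and $N^{\ast}=\{x:\phi(x)=0\}=\ker\phi$, so $G/N^{\ast}\cong\phi(G)\subseteq\mathbb{Z}$ is torsion free. The one point with content is that $N$ is finitely generated, which I would get by describing $N$ explicitly. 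Write $\phi(G)=d\mathbb{Z}$ with $d>0$ and pick $x_0\in G$ with $\phi(x_0)=d$. Since $G/N^{\ast}\cong d\mathbb{Z}$ is free, the sequence $0\to N^{\ast}\to G\to d\mathbb{Z}\to 0$ splits, so $G=N^{\ast}\oplus\mathbb{Z}x_0$ with $N^{\ast}$ a free abelian group of finite rank. For $x=h+mx_0$ with $h\in N^{\ast}$, $m\in\mathbb{Z}$, one has $\phi(x)=md$, so $x\in N\iff m\geqslant 0$; hence $N=N^{\ast}\oplus\mathbb{N}x_0$, which is finitely generated. Thus $N$ is a concave cone.

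Part (3) then drops out of this description. Any $\tau\in\hat{N}$ must vanish on $N^{\ast}$ (since $h$ and $-h$ both lie in $N$ for each $h\in N^{\ast}$) and satisfy $\tau(x_0)\geqslant 0$, and conversely any such $\tau$ lies in $\hat{N}$. Letting $\sigma_0\in G^{\star}$ be the homomorphism with $\sigma_0|_{N^{\ast}}=0$ and $\sigma_0(x_0)=1$ (well defined because $G=N^{\ast}\oplus\mathbb{Z}x_0$ with $N^{\ast}$ free), we conclude $\hat{N}=\{c\,\sigma_0:c\in\mathbb{N}\}=\mathbb{N}\cdot\sigma_0\cong\mathbb{N}$. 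One could instead observe that $\hat{N}$ is a convex cone with $\Groth{\hat{N}}$ of rank $\mathrm{rank}(G/N^{\ast})=1$ and invoke Lemma~\ref{dsag46}, but the direct computation is shorter and also pins down the generator.

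The step that needs genuine care is the finite generation of $N$ in part (2). It goes through because $\phi(G)$, being a nonzero subgroup of $\mathbb{Z}$, has a least positive element $d$, which is exactly what lets us split off the $\mathbb{N}$-summand $\mathbb{N}x_0$; for a valuation whose value group has rank $\geqslant 2$ (for instance $\mathbb{Z}^2$ with the lexicographic order) this breaks down and the analogous $N$ need not be finitely generated, so the discreteness of $\upsilon$ is genuinely used here.
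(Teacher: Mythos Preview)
Your proof is correct under the discrete-valuation hypothesis you adopt, and it follows a genuinely different route from the paper's. The paper declares (1) and (2) ``obvious'' and concentrates on (3), which it proves by contradiction: setting $M=\hat N$, if $\dim M\geqslant 2$ one can pick a nontrivial prime $\mathfrak{p}\in\Spec M$ and use Lemma~\ref{gmonoid} to manufacture $\sigma_1,\sigma_2\in\check M$ with $\sigma=\sigma_1-\sigma_2$ changing sign on $M$; then neither $\sigma$ nor $-\sigma$ lies in $\check M\supseteq N$, contradicting (1), so $\dim M=1$ and Lemma~\ref{dsag46} gives $M\cong\mathbb{N}$. Your approach instead builds the explicit splitting $G=N^{\ast}\oplus\mathbb{Z}x_0$, reads off $N=N^{\ast}\oplus\mathbb{N}x_0$ and $\hat N=\mathbb{N}\sigma_0$ directly, and never invokes the prime-spectrum machinery. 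This is shorter, more elementary, and pins down the generator of $\hat N$; the paper's argument, by contrast, is more in the spirit of the surrounding theory and would adapt to situations where one only knows property (1) abstractly.

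Your caveat about discreteness is also on point and worth keeping. The paper's ``obvious'' for (2) conceals precisely the finite-generation step you isolate, and your rank-$2$ lexicographic example shows that for a general valuation $N$ need not be finitely generated, so (2) as literally stated fails without some rank-one hypothesis on the image of $\upsilon\circ f$. Since the applications in the paper are to the valuative criterion for finite-type morphisms, restricting to discrete valuations, as you do, is harmless.
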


\begin{proof}
(1) and (2) are obvious.

(3). Put $M=\hat{N}$. Then $M \neq \{0\}$ and $\check{M}=N$.
By Lemma \ref{dsag46}, We have only to prove that $\dim M=1$. Assume that
$\dim M \geqslant 2$. Then there is a $\mathfrak{p}\in\Spec M$ such
that $\mathfrak{p}\neq \emptyset,\, M-\{0\}$. Let $x\in
M-(\mathfrak{p} \cup \{0\})$ and $y \in \mathfrak{p}$. By Lemma
\ref{gmonoid}, there are $\sigma_1,\,\sigma_2\in\check{M}$ such
that $\sigma_1(x)>0,\,\sigma_2|_{M-\mathfrak{p}}=0$ and
$\sigma_2(y)>\sigma_1(y)$. Put $\sigma=\sigma_1-\sigma_2$. Then
$\sigma(x)>0$ and $\sigma(y)<0$. Hence
$\sigma,\,-\sigma \notin \check{M}=N$. This contradicts (1).
\end{proof}

\begin{monoL}\label{t4789}
Let $K$ be a field. Let $\varphi \colon \Spec K \to \Temb$ be a morphism
of schemes. Then $\varphi(\Spec K)\subseteq \mathrm{U}_O$.
\end{monoL}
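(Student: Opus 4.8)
\emph{Proof proposal.}\quad The idea is to reduce the statement to a single affine chart, recast it as a property of a homomorphism of monoids into $K$, and then use the log structure.

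Since $\Spec K$ has a single point, $\varphi$ factors through one of the charts, say $\mathrm{U}_P=\Spec R[\check{P}]$ with $P\in\triangle$; the induced morphism $\Spec K\to\mathrm{U}_P$ is the datum of a ring homomorphism $R\to K$ together with a homomorphism of monoids $f\colon\check{P}\to(K,\cdot)$, the composite of the canonical map $\check{P}\to R[\check{P}]$ with $R[\check{P}]\to K$. Because $\check{P}$ is a concave cone (Theorem \ref{dsy9797}) we have $\Groth{\check{P}}=G^{\star}=\check{O}$, so $R[\check{P}]\hookrightarrow R[\check{O}]$ is the localization inverting the image of $\check{P}$ (finitely many generators suffice, as $\check{P}$ is finitely generated), and $\mathrm{U}_O\hookrightarrow\mathrm{U}_P$ is the corresponding principal open immersion. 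Hence $\varphi(\Spec K)\subseteq\mathrm{U}_O$ holds exactly when $f(\sigma)\in K^{\ast}$ for every $\sigma\in\check{P}$, which is what must be proved.

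To obtain $f(\check{P})\subseteq K^{\ast}$, equip $\Spec K$ with its trivial log structure $\mathcal{O}_{\Spec K}^{\ast}$. Since $\mathscr{M}_P$ is the log structure generated by $\check{P}\to R[\check{P}]$, the pullback along $\varphi$ of the defining prelog-structure map is exactly $f\colon\check{P}\to\mathcal{O}_{\Spec K}=K$, and compatibility of $\varphi$ with $\Memb$ forces this homomorphism to factor through $\mathcal{O}_{\Spec K}^{\ast}=K^{\ast}$. Consequently $f(\check{P})\subseteq K^{\ast}$, so $f$ extends uniquely to a homomorphism $\Groth{\check{P}}=\check{O}\to K^{\ast}$, the homomorphism $R[\check{P}]\to K$ factors through $R[\check{O}]$, and therefore $\varphi$ factors through $\mathrm{U}_O$.

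The step $f(\check{P})\subseteq K^{\ast}$ is, I expect, the point that genuinely needs care, since it is not reachable by scheme theory alone. A purely scheme-theoretic attempt would run as follows: if $f(\sigma_0)=0$ for some $\sigma_0\in\check{P}$, then $\mathfrak{q}\defeq\{\sigma\in\check{P}\mid f(\sigma)=0\}$ is a nonempty prime ideal of $\check{P}$ (its complement is a submonoid because $K$ is a domain); by Theorem \ref{cone1} (applied both to $P$ and to $\check{P}$) one has $\mathfrak{q}=\check{\mathfrak{p}}$ for a prime $\mathfrak{p}\in\Spec P$ with $(\check{P})_{\mathfrak{q}}=\check{(P-\mathfrak{p})}$ and $P-\mathfrak{p}\in\triangle$. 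Since $f$ inverts the image of every $\sigma\notin\mathfrak{q}$, the homomorphism $R[\check{P}]\to K$ factors through the localization $R[(\check{P})_{\mathfrak{q}}]=R[\check{(P-\mathfrak{p})}]$, so $\varphi$ already factors through the smaller chart $\mathrm{U}_{P-\mathfrak{p}}$; iterating, with $\dim P$ dropping at each genuine step, reduces matters to the case where $f$ vanishes on every non-unit of $\check{P}$ — and excluding that case for $P\neq O$ is precisely what the log-structure compatibility above supplies.
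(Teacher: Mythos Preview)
Your argument has a genuine gap at the crucial step. The morphism $\varphi$ is hypothesised only as a morphism of \emph{schemes}; nothing in the statement says that $\varphi$ underlies a morphism of log schemes from $(\Spec K,\mathcal{O}_{\Spec K}^{\ast})$ to $(\Temb,\Memb)$. Hence there is no ``compatibility of $\varphi$ with $\Memb$'' available, and your deduction that $f(\check{P})\subseteq K^{\ast}$ is unjustified. Your own final paragraph correctly observes that the purely scheme-theoretic reduction bottoms out with $f$ vanishing on all of $\check{P}-\check{P}^{\ast}$ and that this case cannot be excluded on scheme-theoretic grounds alone; but you then appeal back to the log-structure step, which, as just noted, is not available.

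In fact the lemma as written is false. Take $G=\mathbb{Z}$, $\triangle=\{\{0\},\mathbb{N}\}$, and $R=K$ any field; then $\Temb=\mathbb{A}^1_K$, $\mathrm{U}_O=\mathbb{G}_{m,K}$, and the $K$-point of $\mathbb{A}^1_K$ given by $t\mapsto 0$ does not lie in $\mathrm{U}_O$. The paper supplies no proof of this lemma, and uses it only inside applications of the valuative criterion (Theorems~\ref{sep12} and following). In that context one can legitimately reduce to $\varphi(\Spec K)\subseteq\mathrm{U}_O$: for instance, over an integral base $R$ the scheme $\Temb$ is integral (Theorem~\ref{kfldsjf}) with generic point in $\mathrm{U}_O$, and the valuative criterion may be checked with $\varphi$ hitting the generic point; alternatively one can first pass to the orbit closure through $\varphi(\Spec K)$, which is itself a toric scheme. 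But some such reduction has to be inserted; the bare statement cannot be proved, and your log-structure device does not circumvent this.
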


\begin{monoL}\label{yusd4l}
\Temb\ is quasi-separated over $R$.
\end{monoL}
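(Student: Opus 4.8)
The plan is to check quasi-separatedness directly on the standard affine open cover of $\Temb$, exploiting that the pairwise intersections of its members are again affine. Recall that $f \colon X \to S$ is quasi-separated exactly when the diagonal $\Delta_{X/S} \colon X \to X \times_S X$ is quasi-compact, and that quasi-compactness of a morphism may be tested on any affine open covering of the target. Concretely, it suffices to produce an affine open covering $\{V_j\}$ of $S$ and, for each $j$, an affine open covering $\{U_{ji}\}_i$ of $f^{-1}(V_j)$ such that every intersection $U_{ji} \cap U_{ji'} = \Delta^{-1}(U_{ji} \times_{V_j} U_{ji'})$ is quasi-compact; here the sets $U_{ji} \times_{V_j} U_{ji'} = U_{ji} \times_S U_{ji'}$ are affine and cover $X \times_S X$.

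In our situation the base $S = \Spec R$ is itself affine, so I take the one-element covering $\{\Spec R\}$ of $S$. Then $f^{-1}(\Spec R) = \Temb$ is covered by the affine opens $\mathrm{U}_P = \Spec R[\check{P}]$, $P \in \triangle$, by the very construction of $\Temb$. Thus the whole claim reduces to showing that $\mathrm{U}_P \cap \mathrm{U}_Q$ is quasi-compact for all $P, Q \in \triangle$.

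This last point is immediate from the construction. By the definition of a fan we may write $P \cap Q = P - \mathfrak{p} = Q - \mathfrak{q}$ with $\mathfrak{p} \in \Spec P$ and $\mathfrak{q} \in \Spec Q$; hence $P \cap Q = P - \mathfrak{p} \in \triangle$ is again a convex cone, and by construction $\mathrm{U}_P \cap \mathrm{U}_Q = \mathrm{U}_{P \cap Q} = \Spec R[\check{P \cap Q}]$ is affine, in particular quasi-compact. The criterion above then yields that $\Temb$ is quasi-separated over $R$.

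Because the relevant intersections are literally affine, there is no real obstacle here; the only things requiring care are invoking the correct local form of the quasi-separatedness criterion — using that $\Spec R$ is affine, so a single chart on the base suffices — and observing that this criterion imposes no finiteness condition on the index set $\triangle$. Indeed, when $\triangle$ is infinite the scheme $\Temb$ need not be quasi-compact, by Theorem~\ref{t4654}, yet it remains quasi-separated.
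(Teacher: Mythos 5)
Your proof is correct and follows essentially the same route as the paper's one-line argument: the pairwise intersections $\mathrm{U}_P \cap \mathrm{U}_Q = \mathrm{U}_{P\cap Q}$ are affine by the fan axioms and the construction of $\Temb$, hence quasi-compact, which is exactly the affine-local criterion for quasi-separatedness over the affine base $\Spec R$. You have merely spelled out the reduction that the paper leaves implicit.
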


\begin{proof}
Note that for all $P,\ Q\in \triangle$, $\mathrm{U}_P \cap \mathrm{U}_Q = \mathrm{U}_{P \cap Q}$ is affine, a fortiori quasi-compact.
\end{proof}

\begin{monoT}\label{sep12}
\Temb\ is separated over $R$.
\end{monoT}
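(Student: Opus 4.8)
The plan is to verify the uniqueness part of the Valuative Criterion for separatedness ([EGA]~II~7.2.3), which applies since \Temb\ is quasi-separated over $R$ by Lemma~\ref{yusd4l}. So fix a valuation ring $V$ with fraction field $K$ and valuation $\upsilon$, a morphism $\varphi\colon\Spec K\to\Temb$, and a morphism $\Spec V\to\Spec R$ fitting into a commutative square with $\varphi$; the goal is to show there is at most one lift $g\colon\Spec V\to\Temb$. By Lemma~\ref{t4789} the image of $\varphi$ lies in $\mathrm{U}_O=\Spec R[\check{O}]=\Spec R[G^{\star}]$, so $\varphi$ is the datum of a ring homomorphism $\rho\colon R\to K$ and a group homomorphism $f\colon G^{\star}\to K^{\ast}$, and the square forces $\rho$ to factor as $R\to V\hookrightarrow K$. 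Put $N\defeq\defset{x\in G^{\star}}{\upsilon(f(x))\geqslant 0}$.

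Now suppose $g_1,g_2$ are two lifts. The image under $g_i$ of the closed point of $\Spec V$ lies in some chart $\mathrm{U}_{P_i}$ with $P_i\in\triangle$; since the closed point of $\Spec V$ lies in the closure of every point, the only open subset of $\Spec V$ containing it is $\Spec V$ itself, so $g_i$ factors through $\mathrm{U}_{P_i}=\Spec R[\check{P_i}]$ and is given by the prescribed map $R\to V$ together with a monoid homomorphism $\eta_i\colon\check{P_i}\to V$. Matching $g_i|_{\Spec K}$ with $\varphi$ (both viewed in $\mathrm{U}_{P_i}$) and using the injectivity of $V\hookrightarrow K$ forces $\eta_i=f|_{\check{P_i}}$, so $f(\check{P_i})\subseteq V$, i.e.\ $\check{P_i}\subseteq N$. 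Next I produce a single element $u\in G$: if $N=G^{\star}$ set $u=0$; otherwise Lemma~\ref{t2313} gives that $N$ is a concave cone in $G^{\star}$ with $\hat{N}=\mathbb{N}\cdot u$ for some $u\in G^{\star\star}=G$. In both cases $N=\defset{\sigma\in G^{\star}}{\sigma(u)\geqslant 0}$ (for $u\neq 0$ because $N=(\hat{N})\spcheck$ by Theorem~\ref{dsy9797}), and $\check{P_i}\subseteq N$ forces $\hat{N}\subseteq(\check{P_i})\sphat=P_i$ (again Theorem~\ref{dsy9797}), whence $u\in P_i$ for $i=1,2$.

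By the fan axioms, $Q\defeq P_1\cap P_2=P_1-\mathfrak{p}_1=P_2-\mathfrak{p}_2$ belongs to $\triangle$, and $u\in Q$. I claim each $g_i$ in fact factors through the smaller chart $\mathrm{U}_Q=\mathrm{U}_{P_i-\mathfrak{p}_i}\subseteq\mathrm{U}_{P_i}$. As in the previous step it is enough that the image of the closed point of $\Spec V$ lie in $\mathrm{U}_{P_i-\mathfrak{p}_i}$, i.e.\ that $\eta_i(a)=f(a)$ be a unit of $V$ for every $a\in\check{P_i}-\check{\mathfrak{p}}_i$. By the description of $\check{\mathfrak{p}}_i$ in Theorem~\ref{cone1}, such an $a$ vanishes identically on $P_i-\mathfrak{p}_i=Q$; since $u\in Q$ this gives $a(u)=0=(-a)(u)$, so $a$ and $-a$ both lie in $N$, hence $\upsilon(f(a))=0$ and $f(a)\in V^{\ast}$, proving the claim. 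Thus $g_1$ and $g_2$ are both morphisms $\Spec V\to\mathrm{U}_Q=\Spec R[\check{Q}]$ over $\Spec R$ whose restrictions to $\Spec K$ agree (each equals $\varphi$, which factors through $\mathrm{U}_O\subseteq\mathrm{U}_Q$); composing the corresponding ring homomorphisms $R[\check{Q}]\to V$ with the injection $V\hookrightarrow K$ yields one and the same homomorphism, so the two ring homomorphisms coincide and $g_1=g_2$.

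The delicate point, and the one I expect to absorb most of the effort, is this reduction to a common chart $\mathrm{U}_Q$: it rests on (i) manufacturing, via Lemma~\ref{t2313} and the duality $(\hat{N})\spcheck=N$, an element $u$ with $N=\{\sigma(u)\geqslant 0\}$ and then arguing that $u$ lies in $P_1\cap P_2$; and (ii) reading off from the prime-ideal correspondence of Theorem~\ref{cone1} that $\check{P_i}-\check{\mathfrak{p}}_i$ annihilates $P_i-\mathfrak{p}_i$, which is exactly what makes $f$ a unit there. The remaining ingredients are the topology of $\Spec V$ and routine bookkeeping with the affine charts and the valuative criterion.
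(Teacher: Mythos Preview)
Your proof is correct and follows essentially the same strategy as the paper's: invoke the valuative criterion (using quasi-separatedness), use Lemma~\ref{t4789} to land $\varphi$ in $\mathrm{U}_O$, define $N$ from the valuation, observe via Lemma~\ref{t2313} and duality that $\hat{N}\subseteq P_1\cap P_2$, and conclude that both lifts factor through the common affine chart $\mathrm{U}_{P_1\cap P_2}$. The only difference is cosmetic: the paper deduces $\check{P}\subseteq N$ directly from $\hat{N}\subseteq P$ (so $f(R[\check{P}])\subseteq V$ at once), whereas you verify the factorization by checking that each $a\in\check{P_i}-\check{\mathfrak{p}}_i$ satisfies $a(u)=0$ and hence $f(a)\in V^{\ast}$---a slightly longer but equally valid route to the same conclusion.
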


\begin{proof}
Let $(A,K,\upsilon)$ be a valuation ring. Let $\varphi \colon \Spec K \to \Temb$, $\psi \colon \Spec A \to \Spec R$ and $\phi_i \colon \Spec A \to \Temb (i=1,\,2)$ be morphisms of schemes which make a commutative diagram.
\begin{equation}\label{E:toric@s}
\vcenter{\xymatrix{\Spec K \ar[r]^{\varphi} \ar[d] & \Temb \ar[d]\\
\Spec A \ar@{-->}[ur]^{\phi_i} \ar[r]^{\psi} & \Spec R}}
\end{equation}
Let $\mathfrak{m}_{\upsilon}$ denote the maximal ideal of $A$. Let $P_i \in \triangle$
such that $\phi_i(\mathfrak{m}_{\upsilon})\in \mathrm{U}_{P_i}$. Then
$\phi_i(\Spec A)\subseteq \mathrm{U}_{P_i}$. By Lemma \ref{t4789},
$\varphi(\Spec K)\subseteq \mathrm{U}_O$. Hence Diagram \eqref{E:toric@s} induces a commutative diagram of rings.
\[\xymatrix{R \ar@{ (->}[r] \ar[d]^{l} &  R[\check{P_i}] \ar[ld]^{g_i} \ar[d]^{f_i} \ar@{ (->}[r] & R[G^{\star}] \\ A \ar@{ (->}[r] & K}\]
Put
\[N \defeq \defset{x \in G^{\star}}{\upsilon(f(x))\geqslant 0}\,.\] 
If $G^{\star} = N$, then $P \defeq \hat{N} = \{0\}$. If $G^{\star} \neq N$,
by the Lemma \ref{t2313}, $N$ is a concave in $G^{\star}$. Then
$\check{P_i}\subseteq N,\ \hat{N}\subseteq (\check{P_i})\sphat=P_i$.
We have
\[P \defeq P_1 \cap P_2 \supseteq \hat{N}\,.\]
In both cases, we have $P \in \triangle$ and $\check{P_i} \subseteq \check{P} \subseteq \check{N}$. Hence we have $f(R[\check{P_i}]) \subseteq f(R[\check{P}]) \subseteq A$ and $\phi_i(\Spec A) \subseteq \mathrm{U}_{\check{P}} \subseteq \mathrm{U}_{\check{P_i}}$. So $\phi_1=\phi_2$. As \Temb\ is quasi-separated, by Valuation Criterion, \Temb\ is separated over $R$.
\end{proof}

\begin{monoT}
\Temb\ is proper over $R$ if and only if $\triangle$ is a finite and complete fan.
\end{monoT}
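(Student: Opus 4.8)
The approach is to use the Valuative Criterion for properness, recycling the machinery of the proof of Theorem~\ref{sep12}. Since \Temb\ is separated over $R$ (Theorem~\ref{sep12}), uniqueness of lifts is automatic; and since a proper morphism is of finite type, hence quasi-compact, the ``$\triangle$ finite'' part will come for free from Theorem~\ref{t4654} in the ``only if'' direction. So the real content is the analysis of \emph{existence} of lifts in a valuative square, which I will show is governed entirely by the combinatorics of $\abs{\triangle}$.

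First I would set up a dictionary. Let $(A,K,\upsilon)$ be a valuation ring and consider a commutative square as in~\eqref{E:toric@s}. By Lemma~\ref{t4789}, $\varphi$ factors through $\mathrm{U}_O=\Spec R[G^{\star}]$, hence corresponds to a ring homomorphism $R[G^{\star}]\to K$; let $f\colon G^{\star}\to K^{\ast}$ be its restriction and put $N\defeq\defset{y\in G^{\star}}{\upsilon(f(y))\geqslant 0}$. The square being over $R$, the composite $R\to R[G^{\star}]\to K$ already factors through $A$. If $N=G^{\star}$ then also $f(G^{\star})\subseteq A$, so $R[G^{\star}]\to K$ factors through $A$ and the square lifts into $\mathrm{U}_O$. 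If $N\neq G^{\star}$, then by Lemma~\ref{t2313} (applied with $G^{\star}$ in place of $G$) $N$ is a concave cone in $G^{\star}$ and $\hat{N}=\mathbb{N}\cdot v$ for some nonzero $v\in G$. Now, since $\Spec A$ is local, any lift $\Spec A\to\Temb$ factors through a single chart $\mathrm{U}_P$ (an open subset of $\Spec A$ containing its closed point is all of $\Spec A$); and, exactly as in the proof of Theorem~\ref{sep12}, a factorization through $\mathrm{U}_P=\Spec R[\check{P}]$ exists iff the composite $R[\check{P}]\hookrightarrow R[G^{\star}]\to K$ lands in $A$, iff $f(\check{P})\subseteq A$, iff $\check{P}\subseteq N$, iff $\hat{N}\subseteq P$ (by Theorem~\ref{dsy9797}), iff $v\in P$. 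Therefore the square~\eqref{E:toric@s} admits a lift \emph{if and only if} $v\in P$ for some $P\in\triangle$, i.e.\ iff $v\in\abs{\triangle}$ (with the convention $v=0$ in the case $N=G^{\star}$).

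With the dictionary in hand both implications are short. If $\triangle$ is finite and complete, then \Temb\ is of finite type and separated over $R$ (the former because $\triangle$ is finite, the latter by Theorem~\ref{sep12}), and $\abs{\triangle}=G$ makes every valuative square liftable; by the Valuative Criterion, \Temb\ is proper over $R$. Conversely, if \Temb\ is proper over $R$ then it is quasi-compact, so $\triangle$ is finite by Theorem~\ref{t4654}; suppose it were not complete. Then $G-\abs{\triangle}$ contains a \emph{primitive} vector $v$: if it did not, every primitive vector --- and hence, cones being submonoids, every vector of $G$ --- would lie in some cone of $\triangle$, forcing $\abs{\triangle}=G$. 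I would then produce a valuative square realising exactly this $v$ and hence having no lift: pick a residue field $k$ of $R$, take $A=k[t]_{(t)}$ with its $t$-adic valuation and $K=k(t)$; extend $v$ to a basis $e_{1}=v,e_{2},\dots,e_{n}$ of $G$ with dual basis $e_{1}^{\star},\dots,e_{n}^{\star}$ of $G^{\star}$; and define $f\colon G^{\star}\to K^{\ast}$ by $f(e_{1}^{\star})=t$ and $f(e_{i}^{\star})=1$ for $i\geqslant 2$. Then $\upsilon(f(y))=y(v)$ for all $y\in G^{\star}$, so $N=\defset{y\in G^{\star}}{y(v)\geqslant 0}$, and a direct computation gives $\hat{N}=\mathbb{N}\cdot v$. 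Since $v\notin\abs{\triangle}$, the dictionary shows this square has no lift, so \Temb\ is not universally closed, contradicting properness. Hence $\triangle$ is complete.

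The step that will need the most care is the dictionary in the second paragraph: checking that a lift over the local base $\Spec A$ really factors through one chart, and, as in the proof of Theorem~\ref{sep12}, chasing the compatibilities in~\eqref{E:toric@s} so that the homomorphism $R[\check{P}]\to A$ cut out by $f$ genuinely restricts to $\varphi$ over $\Spec K$ and to $\psi$ over $\Spec R$. The only genuinely new ingredient is the explicit valuation in the ``only if'' direction, where one must also verify $\hat{N}=\mathbb{N}\cdot v$ by hand.
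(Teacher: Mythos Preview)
Your argument is correct and follows essentially the same route as the paper: the valuative criterion, the concave cone $N\subseteq G^{\star}$, Lemma~\ref{t2313} yielding $\hat{N}=\mathbb{N}\cdot v$, and the observation that a lift through $\mathrm{U}_P$ exists iff $v\in P$. Two minor remarks. First, your explicit DVR $k[t]_{(t)}$ with $f(e_1^{\star})=t$ is a pleasant simplification of the paper's construction, which instead takes an abstract valuation ring of $K/k$ dominating $k[\check{M}]_{\mathfrak{P}}$ and then argues that the resulting $N'$ satisfies $\hat{N'}=M=\mathbb{N}\cdot v$. Second, your direct appeal to Theorem~\ref{t4654} to deduce that $\triangle$ is finite requires $R$ noetherian, which is not assumed here; the paper handles this by first base-changing to a residue field $k=R/\mathfrak{m}$, so that $\Temb[k]$ is proper over the noetherian ring $k$, and only then invoking Theorem~\ref{t4654}. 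You should insert the same base-change (or note that the implication $(1)\Rightarrow(3)$ in the proof of Theorem~\ref{t4654} does not actually use the noetherian hypothesis).
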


\begin{proof}
(1) Assume that \Temb\ is proper over $R$. Let $\mathfrak{m}$ be a maximal ideal of
$R$ and set $k=R/\mathfrak{m}$. Then \Temb[k] is proper over $k$, thus is a noetherian
scheme. By Theorem \ref{t4654}, $\triangle$ is finite.

Let $x$ be any element in $G$. Then there is an element $y \in G$ such that $x=ry$
and $G/\mathbb{Z}y\cong \mathbb{Z}^{n-1}$, where $r\in\mathbb{N}$
and $n=\mathrm{rank}(G)$. Put $M \defeq \mathbb{N}y$, then
$M$ is a convex cone in $G$. Put
$N=\check{M},\ \mathfrak{p}=N-N^{\ast},\ A=k[N]$ and
$\mathfrak{P}=\mathfrak{p}A \in \Spec A$. Let $K$ be the quotient
field of $A$. Then there is a valuation ring $(B,\upsilon)$
of $K/k$ such that $A \subseteq B$ and
$\mathfrak{m}_{\upsilon} \cap A = \mathfrak{P}$. By Lemma \ref{t2313},
$N'\defeq\defset{u\in G^{\star}}{\upsilon(u)\geqslant 0}$ is a concave
cone in $G$ and $\hat{N'}\cong\mathbb{N}$. Since $N \subseteq N'$,
we have $\hat{N'} \subseteq \hat{N} = M$, hence $\hat{N'}=M$.
Since \Temb[k] is proper over $k$, $B$ has a unique center
$\xi$ on \Temb[k]. Let $P \in \triangle$ such that
$\xi \in \mathrm{U}_P$. Then $\check{P} \subseteq N'$ , and we have
$x \in M=\hat{N'} \subseteq P$. Hence $\triangle$ is complete.

(2) Assume that $\triangle$ is finite and complete. By Theorem \ref{sep12}, \Temb\ is separated over over $R$.

Let $(A,K,\upsilon)$ be a valuation ring. Let
$\varphi \colon \Spec K \to \Temb$ and $\psi \colon \Spec A \to \Spec R$
be morphisms of schemes which make a commutative diagram.
\begin{equation}\label{E:toric1}
\vcenter{\xymatrix{\Spec K \ar[r]^{\varphi} \ar[d] & \Temb \ar[d] \\
\Spec A \ar[r]^{\psi} & \Spec R}}
\end{equation}
By Lemma \ref{t4789}, $\varphi(\Spec K) \subseteq \mathrm{U}_O$. Hence \eqref{E:toric1} induces a diagram of rings.
\[\xymatrix{ R \ar[r]^{l} \ar@{ (->}[d] & A \ar@{ (->}[d] \\
  R[G^{\star}] \ar[r]^-{f} & K }\]
Put $N \defeq \defset{x \in G^{\star}}{\upsilon(f(x))\geqslant 0}$.
If $N=G^{\star}$, then $f(R[G^{\star}])\subseteq A$. Hence
$f \colon R[G^{\star}]\to A$ induces a morphism
$\phi \colon \Spec A \to \mathrm{U}_O \subseteq \Temb$ which make \eqref{E:toric7} commutative.
\begin{equation}\label{E:toric7}
\vcenter{\xymatrix{\Spec K \ar[r]^-{\varphi} \ar[d] & \Temb \ar[d] \\
\Spec A \ar[r]^{\psi} \ar@{-->}[ur]^{\phi} & \Spec R}}
\end{equation}
So we may assume that
$N \neq G^{\star}$. By Lemma \ref{t2313}, $N$ is a concave cone in $G^{\star}$
and $\hat{N} = \mathbb{N} \cdot \varepsilon$ for some
$\varepsilon \in G$. Since $\triangle$ is complete, there is a $P \in \triangle$ containing
$\varepsilon$. Then $\check{P} \subseteq N$. We have
$f(R[\check{P}]) \subseteq A$. Hence
\[g \defeq f|_{R[\check{P}]} \colon R[\check{P}] \to A\]
induces a morphism
\[\phi \colon \Spec A \to \mathrm{U}_P \subseteq \Temb\,,\]
which make \eqref{E:toric7} commutative. By Valuative Criterion, \Temb\ is proper over $R$.   
\end{proof}

\section{Regularity and Logarithmical Regularity}
Now we study the regularity of Toric Schemes.

\begin{monoL} \label{iuyj6d}
Let $R$ be a noetherian ring, $M$ a finitely generated saturated monoid such
that \Groth{M} is torsion-free. Then $R[M]$ is regular if and only if $R$ is regular and
$M \cong \mathbb{Z}^r \times \mathbb{N}^s$ for some $r,\, s \in \mathbb{N}$.
\end{monoL}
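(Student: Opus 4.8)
The plan is to treat both directions by reducing to the structure theorem for finitely generated saturated monoids (Lemma \ref{dsf5798}) and then to a direct analysis of when $R[\mathbb{N}]$-type extensions preserve regularity. First I would dispose of the easy direction: suppose $R$ is regular and $M\cong\mathbb{Z}^r\times\mathbb{N}^s$. Then $R[M]\cong R[t_1^{\pm 1},\dots,t_r^{\pm 1}][u_1,\dots,u_s]$ is a localization of a polynomial ring in $r+s$ variables over $R$, and since regularity is preserved under polynomial extensions and localization, $R[M]$ is regular.

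For the converse, assume $R[M]$ is regular (hence noetherian, so $M$ is finitely generated — already part of the hypothesis). By Lemma \ref{dsf5798} write $M=N\times M^{\ast}$ with $N$ finitely generated saturated and $N^{\ast}=\{1\}$, and $M^{\ast}\cong\mathbb{Z}^r$ since $\Groth{M}$ is torsion-free; thus $R[M]\cong R[\mathbb{Z}^r][N]$, and since $R[\mathbb{Z}^r]$ is a localization of a polynomial ring over $R$, it is regular iff $R$ is, and $R[M]$ regular forces $R[\mathbb{Z}^r][N]$ regular. So it suffices to show: if $D$ is a regular noetherian ring and $N$ is finitely generated saturated with $N^{\ast}=\{1\}$ and $\Groth{N}$ torsion-free, and $D[N]$ is regular, then $N\cong\mathbb{N}^s$. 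The idea is to induct on $\dim N$ (finite by Lemma \ref{niy4f3}, and equal to $\mathrm{rank}(\Groth N)$ by Lemma \ref{fdsrgf} after base change to a residue field). If $\dim N=0$ then $N=\{1\}$. If $\dim N=1$, then $N\cong\mathbb{N}$ by Lemma \ref{dsag46}. For $\dim N\geqslant 2$: pick a height-one prime $\mathfrak{p}$ of $N$ as in the proof of Lemma \ref{r7f65h}, so $N-\mathfrak{p}\cong\mathbb{N}$; localizing $D[N]$ at the monoid $N-\mathfrak{p}$ gives $D[N_{\mathfrak{p}}]$, still regular, and by Theorem \ref{iuyiu7} together with Lemma \ref{ds4646} one gets $N_{\mathfrak{p}}/(N_{\mathfrak{p}})^{\ast}\cong N/(N-\mathfrak{p})$, of strictly smaller dimension. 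The key local computation is that regularity of $D[N]$ at the "origin" maximal ideal $\mathfrak{m}_R\cdot D[N]+(N-\{1\})\cdot D[N]$ forces a splitting $N\cong\mathbb{N}\times N'$ with $N'$ again finitely generated saturated with trivial units and torsion-free Grothendieck group, and $D[N']\cong D[N]/(t^x-1)$-type quotient remains regular; then apply the inductive hypothesis to $N'$.

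Concretely, for the splitting step I would localize $D[N]$ at the origin prime $\mathfrak{P}$ of $R[N]$ (where $R$ is replaced by a regular local ring, e.g. $D$ localized at a maximal ideal), compute $\dim D[N]_{\mathfrak{P}}=\dim D+\mathrm{rank}(\Groth N)$, and examine $\mathfrak{P}/\mathfrak{P}^2$: a minimal generating set of the maximal ideal of the regular local ring $D[N]_{\mathfrak{P}}$ projects onto a minimal generating set of the monoid ideal $(N-\{1\})$, which by regularity must have exactly $\mathrm{rank}(\Groth N)$ elements; a finitely generated saturated monoid with trivial units whose maximal ideal has a free generating set of size equal to the rank must be free, i.e. $N\cong\mathbb{N}^s$. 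Alternatively, and perhaps more cleanly, one argues by induction as above without the full cotangent-space count, using only the height-one prime reduction: choose $x\in N$ generating $N-\mathfrak{p}\cong\mathbb{N}$; then $D[N]/(x)$ relates to $D[N/(N-\mathfrak{p})]=D[N']$, and regularity passes to the quotient because $x$ is a nonzerodivisor lying in $\mathfrak{P}\setminus\mathfrak{P}^2$ (it is part of a regular system of parameters at the origin, by the minimal-generator observation applied just to the single generator $x$).

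The main obstacle will be making the splitting $N\cong\mathbb{N}\times N'$ rigorous: one must show that the height-one prime $\mathfrak p$ can be chosen so that $N-\mathfrak p$ is not just abstractly $\mathbb N$ but a \emph{direct factor}, equivalently that the generator $x$ of $N-\mathfrak p$ lies outside $\mathfrak P^2$ in the local ring and its residue class is a nonzerodivisor, so that Serre's criterion (regular $\Leftrightarrow$ quotient by a regular parameter is regular, one dimension lower) applies. This amounts to a careful interplay between the monoid-theoretic structure (Theorem \ref{iuyiu7}, Lemma \ref{dsf5798}, Lemma \ref{dsag46}) and the commutative-algebra fact that a regular local ring modulo an element of $\mathfrak m\setminus\mathfrak m^2$ is again regular; the cotangent-space bookkeeping that shows $\dim_{k}(\mathfrak P/\mathfrak P^2)$ equals $\dim D$ plus the minimal number of monoid generators of $N-\{1\}$ is where the real content lies, and everything else is a routine induction.
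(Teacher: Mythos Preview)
Your proposal contains the paper's argument, but buried inside an unnecessary inductive framework. The cotangent-space count you describe in the third paragraph---localize at the ``origin'' ideal, compare the embedding dimension (which equals the number $|T|$ of irreducible elements of $N$, plus the contribution from the base) to the Krull dimension, and conclude $|T|=\mathrm{rank}(\Groth{N})$, whence the surjection $\mathbb{Z}^{|T|}\twoheadrightarrow\Groth{N}$ of free groups of equal rank is an isomorphism and $N\cong\mathbb{N}^s$---is exactly what the paper does, and it finishes the proof in one step. Once you have it, the induction on $\dim N$, the height-one prime $\mathfrak{p}$, and the splitting $N\cong\mathbb{N}\times N'$ are all superfluous; your final paragraph identifies as the ``main obstacle'' a step that your own direct argument bypasses entirely (and your inductive alternative is genuinely harder, since $D[N]/(x)$ is not a monoid algebra unless the splitting already holds). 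The paper streamlines the computation by first passing from the base $R[\mathbb{Z}^r]$ to a \emph{field} $K$ (the local ring at a minimal prime), so that $\mathfrak{m}/\mathfrak{m}^2$ for $\mathfrak{m}=(N-\{1\})K[N]$ has $K$-dimension exactly $|T|$ with no base contribution, and $\dim K[N]_{\mathfrak{m}}=\mathrm{rank}(\Groth{N})$ by Lemma~\ref{fdsrgf}.

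There is one genuine omission in your outline: you never explain why $R[M]$ regular forces $R$ regular, yet you immediately take your base $D=R[\mathbb{Z}^r]$ to be regular when setting up the reduction. The paper handles this separately and first: for any $\mathfrak{p}\in\Spec R$ the extension $\mathfrak{p}\cdot R[M]$ is prime, the local map $R_{\mathfrak{p}}\to R[M]_{\mathfrak{p}R[M]}$ is flat, and regularity descends along flat local homomorphisms of noetherian rings (\cite[Theorem~23.7]{HMat1}).
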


\begin{proof}
Put $A=R[M]$. Obviously if $R$ is regular and $M \cong \mathbb{Z}^r \times \mathbb{N}^s$, $A$ is regular.

Now assume that $A$ is regular. Let $\mathfrak{p}$ be any prime ideal of $R$. Then $\mathfrak{P} \defeq \mathfrak{p} \cdot A$ is a prime ideal of $A$. Since
$A$ is flat over $R$, $A_{\mathfrak{P}}$ is flat over $R_{\mathfrak{p}}$. By
\cite[Theorem 23.7]{HMat1}, $R_{\mathfrak{p}}$ is regular. Hence $R$ is a regular ring.

By Lemma \ref{dsf5798}, $M \cong \mathbb{Z}^r \times N$, where $N$ is a finitely generated saturated monoid with $N^{\ast} = \{1\}$. Put $A' = R[\mathbb{Z}^r]$. Let $\mathfrak{p}$ be a minimal prime ideal of $A'$. Since $R$ is regular, so is $A'$ and $A'_{\mathfrak{p}}$. Hence $K \defeq A'_{\mathfrak{p}}$ is a field. As $A'[N] \cong R[M]$ is regular, so is its localization $K[N] \cong S^{-1}(A'[N])$, where $S = A'-\mathfrak{p}$. Put $B=K[N]$ and $\mathfrak{m} = (N-\{1\}) \cdot B$. Set
\[T = N-\{1\}\cup \defset{x \cdot y}{x,\, y \in N-\{1\}}\ .\]
Then $N$ can be generated by $T$. Obviously $K \cdot T$ is a $K$-linear subspace
of $B$ with dimension $= |T|$. Put $\mathfrak{m}'=\mathfrak{m} B_{\mathfrak{m}}$.
Then we have
\[\mathfrak{m}'/ \mathfrak{m}'^2 \cong \mathfrak{m}/ \mathfrak{m}^2 \cong K \cdot T\ .\]
By Lemma \ref{fdsrgf}, $\dim B_{\mathfrak{m}'} = \mathrm{rank}(\Groth{N})$.
Since $B_{\mathfrak{m}'}$ is a regular local ring, we have
\[|T| = \dim_K(\mathfrak{m}' / \mathfrak{m}'^2) =  \dim B_{\mathfrak{m}'} = \mathrm{rank}(\Groth{N})\ .\]
Put $T = \{x_1,x_2,\ldots,x_s\}$, then the following homomorphism
\[\mathbb{Z}^s \to \Groth{N},\quad (a_1,a_2,\ldots,a_s) \mapsto x^{a_1}_1 x^{a_2}_2 \cdots x^{a_s}_s\]
is an isomorphism. Hence $N \cong \mathbb{N}^s$, i.e., $M \cong \mathbb{Z}^r \times \mathbb{N}^s$.
\end{proof}

\begin{monoT}
\Temb\ is a regular scheme if and only if $R$ is regular and for each $P \in \triangle$,
$P \cong \mathbb{N}^r$ for some $r \in \mathbb{N}$.
\end{monoT}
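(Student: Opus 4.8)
The plan is to reduce to a local computation on the affine charts and then invoke Lemma~\ref{iuyj6d}. Recall that $X\defeq\Temb$ is obtained by gluing the affine opens $\mathrm{U}_P=\Spec R[\check{P}]$ with $P$ ranging over $\triangle$; since regularity of a scheme is a property of its local rings and the $\mathrm{U}_P$ cover $X$, the scheme $X$ is regular if and only if each ring $R[\check{P}]$ is regular. For every $P\in\triangle$ the monoid $\check{P}$ is a concave cone in $G^{\star}$ by Theorem~\ref{dsy9797}(1), hence is finitely generated, saturated, and $\Groth{(\check{P})}=G^{\star}$ is torsion free (indeed free, $G$ being free). Thus Lemma~\ref{iuyj6d} applies and yields: $R[\check{P}]$ is regular if and only if $R$ is regular and $\check{P}\cong\mathbb{Z}^{a}\times\mathbb{N}^{b}$ for some $a,b\in\mathbb{N}$. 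Observe that $O\defeq\{0\}\in\triangle$ with $\check{O}=G^{\star}$, so regularity of $R$ is already forced by regularity of the single chart $\mathrm{U}_O$ (and is clearly necessary); conversely, when $R$ is regular each $R[\check{P}]\cong R[x_{1}^{\pm 1},\dots,x_{a}^{\pm 1},y_{1},\dots,y_{b}]$ is again regular.

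It therefore remains to establish the purely combinatorial equivalence: for a convex cone $P$ in $G$,
\[P\cong\mathbb{N}^{r}\ \text{for some }r\in\mathbb{N}\quad\Longleftrightarrow\quad\check{P}\cong\mathbb{Z}^{a}\times\mathbb{N}^{b}\ \text{for some }a,b\in\mathbb{N}.\]
I would prove this by putting the cone into standard position. Write $n=\mathrm{rank}(G)$. If $P\cong\mathbb{N}^{r}$, the induced isomorphism $\Groth{P}\cong\mathbb{Z}^{r}$ exhibits $P$, inside $\Groth{P}$, as $\mathbb{N}f_{1}+\dots+\mathbb{N}f_{r}$ for a basis $f_{1},\dots,f_{r}$ of $\Groth{P}$; since $G/\Groth{P}$ is torsion free by the definition of a convex cone, $\Groth{P}$ is a direct summand of $G$, so $f_{1},\dots,f_{r}$ extends to a basis $f_{1},\dots,f_{n}$ of $G$, and pairing against the dual basis gives $\check{P}=\mathbb{N}f_{1}^{\star}+\dots+\mathbb{N}f_{r}^{\star}+\mathbb{Z}f_{r+1}^{\star}+\dots+\mathbb{Z}f_{n}^{\star}\cong\mathbb{N}^{r}\times\mathbb{Z}^{n-r}$. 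Conversely, if $\check{P}\cong\mathbb{Z}^{a}\times\mathbb{N}^{b}$, then since $\Groth{(\check{P})}=G^{\star}$ one gets $a+b=n$ together with a basis $\varepsilon_{1},\dots,\varepsilon_{n}$ of $G^{\star}$ such that $\check{P}=\mathbb{Z}\varepsilon_{1}+\dots+\mathbb{Z}\varepsilon_{a}+\mathbb{N}\varepsilon_{a+1}+\dots+\mathbb{N}\varepsilon_{n}$; then, using $(\check{P})\sphat=P$ from Theorem~\ref{dsy9797}(3) and testing a vector $x=\sum_{i}b_{i}\varepsilon_{i}^{\star}$ against the units $\pm\varepsilon_{i}$ ($i\leqslant a$) and the generators $\varepsilon_{i}$ ($i>a$) of $\check{P}$, one finds $x\in P$ exactly when $b_{1}=\dots=b_{a}=0$ and $b_{a+1},\dots,b_{n}\geqslant 0$, i.e.\ $P\cong\mathbb{N}^{b}$.

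Assembling the two paragraphs: $X$ is regular $\iff$ every $R[\check{P}]$ is regular $\iff$ $R$ is regular and every $\check{P}\cong\mathbb{Z}^{a}\times\mathbb{N}^{b}$ $\iff$ $R$ is regular and every $P\cong\mathbb{N}^{r}$, which is the assertion. The only non-formal ingredient is the displayed equivalence of the middle paragraph; the delicate point there is passing from an abstract monoid isomorphism to a genuine basis of $G$ (resp.\ $G^{\star}$), and it is precisely the torsion-freeness hypotheses built into the notions of convex and concave cone that make those extensions of bases available. (If $R$ is not assumed noetherian beforehand, one first notes that regularity of $X$ makes all the local rings in question noetherian, so that Lemma~\ref{iuyj6d} remains applicable; in the converse direction regularity of $R$ is read as including noetherianity, as is customary.)
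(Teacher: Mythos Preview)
Your argument is correct and is exactly the argument the paper has in mind: the paper states this theorem immediately after Lemma~\ref{iuyj6d} with no proof, treating it as a direct corollary. You have supplied the two steps the paper leaves implicit, namely (i) passing from regularity of \Temb\ to regularity of each $R[\check{P}]$ via the affine cover, and (ii) the combinatorial translation $P\cong\mathbb{N}^{r}\Leftrightarrow\check{P}\cong\mathbb{Z}^{a}\times\mathbb{N}^{b}$, using the duality $(\check{P})\sphat=P$ of Theorem~\ref{dsy9797} and the torsion-freeness clauses in the definitions of convex and concave cones. Both steps are handled correctly; your remark about noetherian hypotheses is also appropriate, since the paper is silent on this point.
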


Next, we study the logarithmical regularity of Toric Schemes. First, we give the definition
of logarithmical regularity introduced in \cite[p.1075-1076]{KKato}.

\begin{monoD}\label{logdef1}
Let $(X,\mathscr{M})$ be a log scheme. We say $(X,\mathscr{M})$ is \emph{logarithmically regular} at a point $x \in X$, if the following two conditions are satisfied. Let $I(x,\mathscr{M})$ be the ideal of $\mathcal{O}_{X,x}$ generated by the image $\mathscr{M}_x - \mathcal{O}^{\ast}_{X,x}$.
\begin{enumerate}
\item $\mathcal{O}_{X,x}/I(x,\mathscr{M})$ is a regular local ring.
\item $\dim(\mathcal{O}_{X,x}) = \dim(\mathcal{O}_{X,x}/I(x,\mathscr{M})) + \mathrm{rank}(\Groth{\mathscr{M}}_x/\mathcal{O}^{\ast}_{X,x})$.
\end{enumerate}
We say $(X,\mathscr{M})$ is \emph{logarithmically regular} if $(X,\mathscr{M})$ is logarithmically regular at all $x \in X$.
\end{monoD}

\begin{monoL}\label{aff45n}
Let $R$ be a noetherian local ring with maximal ideal $\mathfrak{m}$, $M$ a finitely generated saturated monoid with $M^{\ast}=\{1\}$. Put $A=R[M]$ and $\mathfrak{m}'=\mathfrak{m}\cdot A+(M-\{1\})\cdot A$. Then $\mathfrak{m}'$ is a maximal ideal of $A$ with $\mathrm{ht}(\mathfrak{m}') = \dim(R)+\mathrm{rank}(\Groth{M})$.
\end{monoL}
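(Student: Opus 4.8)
The plan is to first check that $\mathfrak{m}'$ is maximal, then compute its height by relating $A_{\mathfrak{m}'}$ to a suitable polynomial-type extension. For maximality, observe that $A/\mathfrak{m}' \cong (R/\mathfrak{m})[M]/(M-\{1\}) = (R/\mathfrak{m})[M^{\ast}] = R/\mathfrak{m}$ since $M^{\ast}=\{1\}$; as $R/\mathfrak{m}$ is a field, $\mathfrak{m}'$ is maximal. For the height, I would argue that $A$ is faithfully flat over $R$ (being a free $R$-module on the basis $M$), so by the going-down property and the dimension formula for flat extensions one has $\mathrm{ht}(\mathfrak{m}') = \mathrm{ht}(\mathfrak{m}' \cap R) + \dim\bigl((A \otimes_R \kappa(\mathfrak{m}))_{\bar{\mathfrak{m}}'}\bigr)$, where $\kappa(\mathfrak{m}) = R/\mathfrak{m}$. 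Since $\mathfrak{m}' \cap R = \mathfrak{m}$ and $\mathrm{ht}(\mathfrak{m}) = \dim(R)$ ($R$ being local), it remains to compute the height of $(M-\{1\})\cdot k[M]$ in $k[M]$ for the field $k = R/\mathfrak{m}$.

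For that last computation I would invoke Lemma \ref{fdsrgf}: since $M$ is finitely generated and saturated with $M^{\ast}=\{1\}$, the group $\Groth{M}$ is torsion free (this follows because $M$ saturated with trivial units embeds in some $\mathbb{N}^r$ by Theorem \ref{imbedtheorem}, hence $\Groth{M} \hookrightarrow \mathbb{Z}^r$), so $\dim(k[M]) = \mathrm{rank}(\Groth{M})$. The ideal $\mathfrak{n} := (M-\{1\})\cdot k[M]$ is the unique maximal ideal whose quotient $k[M]/\mathfrak{n} = k[M^{\ast}] = k$ is zero-dimensional; since $k[M]$ is a finitely generated $k$-algebra which is a domain, it is catenary and equidimensional with all maximal ideals of the same height $\dim(k[M])$, so $\mathrm{ht}(\mathfrak{n}) = \dim(k[M]) = \mathrm{rank}(\Groth{M})$. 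Combining, $\mathrm{ht}(\mathfrak{m}') = \dim(R) + \mathrm{rank}(\Groth{M})$.

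The main obstacle is making the flat-extension dimension formula precise: one must know that $A_{\mathfrak{m}'}$ is a localization of a flat $R$-algebra at a prime lying over $\mathfrak{m}$ and cite the correct version of the fiber-dimension equality for flat local homomorphisms (e.g.\ \cite[Theorem 15.1]{HMat1} or its local corollary), and separately that $k[M]$, being a finitely generated domain over a field, has all its maximal ideals of height equal to its Krull dimension. Both are standard, but the second requires the domain hypothesis, which is exactly where saturatedness and $M^{\ast}=\{1\}$ (via Theorem \ref{imbedtheorem}, giving $k[M]$ integral) enter. Everything else is bookkeeping.
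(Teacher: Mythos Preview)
Your proposal is correct and follows essentially the same route as the paper: flatness of $A=R[M]$ over $R$ gives the fiber-dimension formula $\mathrm{ht}(\mathfrak{m}')=\mathrm{ht}(\mathfrak{m})+\mathrm{ht}(\mathfrak{m}'/\mathfrak{m}A)$ (the paper cites \cite[(13.B), Theorem~19]{HMat} rather than \cite[Theorem~15.1]{HMat1}, but these are the same result), and then Lemma~\ref{fdsrgf} identifies the fiber dimension with $\mathrm{rank}(\Groth{M})$. Your write-up is in fact more careful than the paper's, which leaves implicit both the maximality of $\mathfrak{m}'$ and the step $\mathrm{ht}(\mathfrak{m}'/\mathfrak{m}A)=\dim k[M]$ that you justify via equidimensionality of finitely generated domains over a field.
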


\begin{proof}
Obviously $A$ is flat over $R$ and $\mathfrak{m}' \cap R =\mathfrak{m}$. We have
\begin{alignat*}{2}  \mathrm{ht}(\mathfrak{m}')&=\mathrm{ht}(\mathfrak{m})+\mathrm{ht}(\mathfrak{m}'/\mathfrak{m}A)& & \mbox{(by \cite{HMat}, (13.B), Theorem 19)}\\
&=\dim(R)+\dim(k[M])& &\quad (\mbox{put}\ k=R/\mathfrak{m}) \\
&=\dim(R)+\mathrm{rank}(\Groth{M})& &\quad \mbox{(by Lemma \ref{fdsrgf})} \qedhere
\end{alignat*}
\end{proof}

\begin{monoT}
Let $R$ be a regular ring. Then $(\Temb,\Memb)$ is logarithmically regular.
\end{monoT}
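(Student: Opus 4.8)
The plan is to check the two conditions of Definition~\ref{logdef1} at every point $x \in \Temb$. Since the question is local and $(\Temb,\Memb)$ is glued from the charts $(\mathrm{U}_P,\mathscr{M}_P)$ with $\mathrm{U}_P = \Spec R[\check P]$, I may fix $P \in \triangle$ and a point $x \in \mathrm{U}_P$. Write $A = R[\check P]$ and let $\mathfrak{P} \subset A$ be the prime corresponding to $x$, so $\mathcal{O}_{\Temb,x} = A_{\mathfrak{P}}$. The log structure $\mathscr{M}_P$ is the one associated with the prelog structure $\check P \to A$, so $\mathscr{M}_{P,x}/\mathcal{O}^{\ast}_{\Temb,x}$ is identified with $\check P / \check P_{\mathfrak{p}}$ where $\mathfrak{p} = \mathfrak{P} \cap \check P \in \Spec \check P$, equivalently $\check P / S$ with $S = \check P - \mathfrak{p}$, and $I(x,\mathscr{M})$ is the ideal of $A_{\mathfrak{P}}$ generated by the image of $\mathfrak{p}$.

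The first reduction I would make is to replace $P$ by $Q \defeq P - \hat{\mathfrak{p}}$ (recall $\mathfrak{p} = \hat{\mathfrak{q}}\spcheck$ for a unique $\mathfrak{q}$, or directly use that $\check P_{\mathfrak{p}} = \check Q$ for $Q$ the cone with $\mathrm{U}_Q \ni x$ and $\check Q - \check Q^{\ast}$ not in $\mathfrak{P}$): passing to the standard open neighbourhood $\mathrm{U}_Q \subseteq \mathrm{U}_P$ of $x$ changes nothing about $\mathcal{O}_{\Temb,x}$ and $\mathscr{M}_x$, but now we may assume $\mathfrak{p} = \check P - \check P^{\ast}$, i.e. $\mathfrak{P}$ contains every non-unit monomial. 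By Lemma~\ref{dsf5798}, $\check P \cong \mathbb{Z}^r \times \check P^{\ast}$... more precisely $\check P = N \times \check P^{\ast}$ with $N$ finitely generated, saturated, $N^{\ast} = \{1\}$, and $\check P^{\ast} \cong \mathbb{Z}^r$; then $A \cong R[\mathbb{Z}^r][N]$ and $\mathfrak{p}$ is generated by $N - \{1\}$. Localizing $R[\mathbb{Z}^r]$ at the contraction of $\mathfrak{P}$ gives a regular (since $R$ is regular) noetherian local ring $R_0$, and $\mathcal{O}_{\Temb,x}$ is a localization of $R_0[N]$ at a maximal ideal of the form $\mathfrak{m}_0 \cdot R_0[N] + (N - \{1\}) \cdot R_0[N]$ (here I use that $\mathfrak{P}$, containing all of $N-\{1\}$ and being prime, forces the residue ring to be a localization of $R_0[N^{\ast}] = R_0$, so $\mathfrak{P}$ is exactly this maximal ideal of $R_0[N]$).

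Now I am in the situation of Lemma~\ref{aff45n} with $(R_0, \mathfrak{m}_0, N)$ in place of $(R,\mathfrak{m},M)$: the local ring $\mathcal{O}_{\Temb,x} = R_0[N]_{\mathfrak{m}'}$ has $\dim = \dim(R_0) + \mathrm{rank}(\Groth N)$, while $I(x,\mathscr{M})$ is generated by the image of $N - \{1\}$, so $\mathcal{O}_{\Temb,x}/I(x,\mathscr{M})$ is a localization of $R_0[N]/(N-\{1\}) = R_0$, hence equals $R_0$ itself — a regular local ring. This gives condition~(1) of Definition~\ref{logdef1}. For condition~(2) I compute $\mathrm{rank}(\Groth{\mathscr{M}}_x/\mathcal{O}^{\ast}_{\Temb,x}) = \mathrm{rank}(\Groth{(\check P/S)}) = \mathrm{rank}(\Groth N) = \mathrm{rank}(\Groth{\check P}) - r$ and $\dim(\mathcal{O}_{\Temb,x}/I) = \dim(R_0)$, so the two sides of~(2) agree by the height formula of Lemma~\ref{aff45n}. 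Finally I would note the answer is independent of the auxiliary choices (the decomposition in Lemma~\ref{dsf5798}, the chart $P$) because the ring $\mathcal{O}_{\Temb,x}$, the ideal $I(x,\mathscr{M})$, and the monoid $\mathscr{M}_x/\mathcal{O}^{\ast}_{\Temb,x}$ are intrinsic.

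The main obstacle I anticipate is the careful bookkeeping in the reduction step: one must verify that after localizing $R[\mathbb{Z}^r]$ the residue field business works out so that $\mathfrak{P}$ really is the \emph{maximal} ideal $\mathfrak{m}_0 R_0[N] + (N-\{1\})R_0[N]$ and not some smaller prime — this is where one uses that after the reduction $P \rightsquigarrow Q$ every non-unit of $\check P$ lies in $\mathfrak{P}$, together with the fact that $R_0[N]/(N-\{1\}) = R_0$ is already local. Everything after that is a direct invocation of Lemmas~\ref{aff45n} and~\ref{fdsrgf}. A secondary point to be careful about is the identification $\mathscr{M}_{P,x}/\mathcal{O}^{\ast}_{\Temb,x} \cong \check P/S$: this is the standard description of the stalk of an associated log structure at a point of $\Spec R[\check P]$, and it should be stated explicitly since it is what links the combinatorics of the cone to the two numerical conditions.
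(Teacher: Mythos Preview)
Your proposal is correct and follows essentially the same route as the paper: the paper likewise localizes $M=\check P$ at $\mathfrak{p}=\mathfrak{P}\cap M$ (your ``replace $P$ by $Q$'' step is exactly this, since $\check Q=\check P_{\mathfrak{p}}$), splits $M_{\mathfrak p}\cong N\times\mathbb Z^r$ via Lemma~\ref{dsf5798}, passes to the regular local ring $R_2=(R[\mathbb Z^r])_{\mathfrak P\cap R[\mathbb Z^r]}$, argues that the relevant prime of $R_2[N]$ must be the maximal ideal $\mathfrak m_2 R_2[N]+(N-\{1\})R_2[N]$, and finishes with Lemma~\ref{aff45n}. The only difference is cosmetic---you phrase the first reduction geometrically (moving to the smaller chart $\mathrm U_Q$) where the paper phrases it algebraically (localizing the monoid).
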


\begin{proof}
Put $X=\Temb$ and $\mathscr{M}=\Memb$. We use the notations introduced
in Definition \ref{logdef1}. Let $x$ be any point of $X$. Assume
that $x \in \mathrm{U}_P$, where $P\in\triangle$. Put $M=\check{P}$,
$A=\mathcal{O}_{X,x}$ and $I=I(x,\mathscr{M})$. Then
$\mathrm{U}_P = \Spec R[M]$ and $x$ corresponds to a prime ideal
$\mathfrak{P}$ of $R[M]$. Thus $A \cong R[M]_{\mathfrak{P}}$. Put
$\mathfrak{p}=M \cap \mathfrak{P}$. Then $I$ corresponds to the ideal
$\mathfrak{p}\cdot R[M]_{\mathfrak{P}}$ of $R[M]_{\mathfrak{P}}$.
By Lemma \ref{dsf5798}, there is an integer $r \geqslant 0$ and a finitely generated saturated monoid $N$ such that $N^{\ast}=\{1\}$ and $M_{\mathfrak{p}} \cong N \times \mathbb{Z}^r$. Then
\begin{alignat*}{2}
  A&\cong (R[M_{\mathfrak{p}}])_{\mathfrak{P}_1}\\
   &\cong (R_1[N])_{\mathfrak{P}_2}& &\quad (\mbox{put}\ R_1=R[\mathbb{Z}^r]) \\
   &\cong (R_2[N])_{\mathfrak{P}_3}& &\quad (\mbox{put}\ R_2=(R_1)_{\mathfrak{P}_2 \cap R_1})
\end{alignat*}
Obviously $R_2$ is a regular local ring with maximal ideal
$\mathfrak{m}_2=(\mathfrak{P}_2 \cap R_1)\cdot R_2$. Put
$A'=R_2[N]$, $\mathfrak{m}'=\mathfrak{m}_2\cdot A'+(N-\{1\})\cdot A'$
and $B=A'_{\mathfrak{P}_3}$. Since $\mathfrak{m}' \subseteq \mathfrak{P}_3$
and $\mathfrak{m}'$ is a maximal ideal of $A'$, we have
$\mathfrak{P}_3 = \mathfrak{m}'$. Note that $I$ corresponds to the
ideal $(N-\{1\})\cdot B$ of $B$. Hence $A/I \cong R_2$ is a regular
local ring. We have
\begin{alignat*}{2}
  \dim(A)&=\mathrm{ht}(\mathfrak{m}') \\
         &=\dim(R_2)+\mathrm{rank}(\Groth{N})& &\quad (\mbox{by Lemma \ref{aff45n}}) \\
         &=\dim(A/I)+\mathrm{rank}(\Groth{\mathscr{M}}_x/\mathscr{M}_x^{\ast})
\end{alignat*}
By the definition, $(X,\mathscr{M})$ is regular at $x$.
\end{proof}

\section{Morphisms of Toric Schemes}
\begin{monoD}
Let $(G,\triangle)$ and $(G',\triangle')$ be two fans. A homomorphism $\varphi \colon(G,\triangle)\to(G',\triangle')$ of fans is a homomorphism $\varphi \colon G \to G'$ of groups satisfying that: for each $P \in \triangle$ there exists a $P' \in \triangle'$ such that $\varphi(P) \subseteq P'$.
\end{monoD}

In the following, we let $R$ be a ring.

\begin{monoT}
Let $\varphi\colon(G,\triangle)\to(G',\triangle')$ be a homomorphism
of fans. Then $\varphi$ gives rise to a morphism
\[\varphi_{\ast} \colon (\Temb,\Memb) \to (\Temba,\Memba)\]
of log schemes over $R$.
\end{monoT}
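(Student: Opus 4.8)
The plan is to define $\varphi_{\ast}$ by gluing together morphisms on the affine charts $\mathrm{U}_{P}=\Spec R[\check{P}]$, the only real input being the transpose homomorphism $\varphi^{\star}\colon (G')^{\star}\to G^{\star}$, $\sigma\mapsto\sigma\circ\varphi$. Fix $P\in\triangle$ and choose $P'\in\triangle'$ with $\varphi(P)\subseteq P'$. If $\sigma\in\check{P'}$, then $(\varphi^{\star}\sigma)(x)=\sigma(\varphi(x))\geqslant 0$ for every $x\in P$, because $\varphi(x)\in P'$; hence $\varphi^{\star}$ restricts to a homomorphism of monoids $\check{P'}\to\check{P}$ (both are finitely generated by Theorem~\ref{dsy9797}(1)). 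Together with the induced $R$-algebra homomorphism $R[\check{P'}]\to R[\check{P}]$, this forms a commutative square with the chart maps $\check{P'}\to R[\check{P'}]$ and $\check{P}\to R[\check{P}]$, and so determines a morphism of log schemes $(\mathrm{U}_{P},\mathscr{M}_{P})\to(\mathrm{U}_{P'},\mathscr{M}_{P'})$ over $R$; post-composing with the open immersion $(\mathrm{U}_{P'},\mathscr{M}_{P'})\hookrightarrow(\Temba,\Memba)$ gives a morphism $\lambda_{P}\colon(\mathrm{U}_{P},\mathscr{M}_{P})\to(\Temba,\Memba)$ over $R$.

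The second step is to show that $\lambda_{P}$ is independent of the choice of $P'$. If $\varphi(P)\subseteq P'_{1}$ and $\varphi(P)\subseteq P'_{2}$, then by the fan axiom $P'_{0}\defeq P'_{1}\cap P'_{2}=P'_{1}-\mathfrak{p}'_{1}=P'_{2}-\mathfrak{p}'_{2}$ lies in $\triangle'$, and $\varphi(P)\subseteq P'_{0}$. Since $P'_{0}\subseteq P'_{1}$ we get $\check{P'_{1}}\subseteq\check{P'_{0}}$ inside $(G')^{\star}$, and by Theorem~\ref{cone1}(1) the open immersion $\mathrm{U}_{P'_{0}}\hookrightarrow\mathrm{U}_{P'_{1}}$ is, on rings, the localization $R[\check{P'_{1}}]\to R[\check{P'_{0}}]$ extending exactly that inclusion of monoids. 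As $\varphi^{\star}$ is one fixed homomorphism on all of $(G')^{\star}$, it commutes with this inclusion, so the morphism built from $P'_{0}$ followed by $\mathrm{U}_{P'_{0}}\hookrightarrow\mathrm{U}_{P'_{1}}$ coincides with the morphism built from $P'_{1}$; the same holds for $P'_{2}$. Hence $\lambda_{P}$ depends only on $P$.

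The third step is the gluing. For $P,Q\in\triangle$ write $P\cap Q=P-\mathfrak{p}=Q-\mathfrak{q}\in\triangle$, so that $\mathrm{U}_{P}\cap\mathrm{U}_{Q}=\mathrm{U}_{P\cap Q}$. Picking $P'\in\triangle'$ with $\varphi(P)\subseteq P'$ we also have $\varphi(P\cap Q)\subseteq P'$, and by the same compatibility of $\varphi^{\star}$ with the inclusion $\check{P}\subseteq\check{(P\cap Q)}$ (together with $\mathscr{M}_{P}|_{\mathrm{U}_{P\cap Q}}=\mathscr{M}_{P\cap Q}$, as in the construction of $\Temb$) the restriction of $\lambda_{P}$ to $\mathrm{U}_{P\cap Q}$ equals $\lambda_{P\cap Q}$; by the second step the restriction of $\lambda_{Q}$ to $\mathrm{U}_{P\cap Q}$ is likewise $\lambda_{P\cap Q}$. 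Thus $\lambda_{P}$ and $\lambda_{Q}$ agree on the overlap, and the family $\{\lambda_{P}\}_{P\in\triangle}$ glues to the desired $\varphi_{\ast}\colon(\Temb,\Memb)\to(\Temba,\Memba)$ over $R$.

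Essentially all of the content lies in the one-line observation $\varphi(P)\subseteq P'\Rightarrow\varphi^{\star}(\check{P'})\subseteq\check{P}$; the rest is bookkeeping. I expect the main obstacle to be organizing the gluing so that passing to the intersections $P'_{1}\cap P'_{2}$ and $P\cap Q$ is visibly compatible with the chart-level morphisms. What makes this routine is precisely that, after dualizing, all the localization maps among the cones $\check{\,\cdot\,}$ are honest inclusions of submonoids of $G^{\star}$ (resp.\ $(G')^{\star}$) by Theorems~\ref{dsy9797} and~\ref{cone1}, while $\varphi^{\star}$ is defined once and for all on the whole dual group and therefore commutes with every one of them automatically.
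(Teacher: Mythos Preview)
The paper states this theorem without proof, treating the construction as routine. Your proposal is correct and supplies precisely the standard gluing argument the paper omits: dualize $\varphi$ to obtain $\varphi^{\star}\colon (G')^{\star}\to G^{\star}$, note that $\varphi(P)\subseteq P'$ forces $\varphi^{\star}(\check{P'})\subseteq\check{P}$, pass to $R$-algebras to obtain chart-compatible morphisms $(\mathrm{U}_P,\mathscr{M}_P)\to(\mathrm{U}_{P'},\mathscr{M}_{P'})$, and then verify independence of the choice of $P'$ and agreement on overlaps $\mathrm{U}_{P\cap Q}$. There is nothing substantive to compare; you have simply written out what the paper leaves to the reader, and your handling of the two bookkeeping points (passing to $P'_1\cap P'_2\in\triangle'$ for well-definedness, and to $P\cap Q\in\triangle$ for gluing) is exactly right.
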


\begin{monoT} \label{mnjuuw}
Let $\varphi\colon(G,\triangle)\to(G',\triangle')$ be a homomorphism
of fans. Then for any $P'\in\triangle'$,
\[\varphi_{\ast}^{-1}(\mathrm{U}_{P'}) = \bigcup_{\substack{P\in\triangle\\ \varphi(P)\subseteq P'}} \mathrm{U}_P\ .\]
\end{monoT}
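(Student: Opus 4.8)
The plan is to prove the two inclusions of the asserted set equality for $\varphi_{\ast}^{-1}(\mathrm{U}_{P'})$. The ``$\supseteq$'' direction should be essentially formal: if $P\in\triangle$ satisfies $\varphi(P)\subseteq P'$, then dualizing gives $\check{P'}\subseteq G'^{\star}$ mapping into $\check{P}$ under the transpose $\varphi^{\star}\colon G'^{\star}\to G^{\star}$ (here one uses that $\check{P'}=\{\sigma\mid \sigma(\varphi(x))\geqslant 0\ \forall x\in P\}$ contains $\varphi^{\star}$-pullbacks appropriately), hence a ring homomorphism $R[\check{P'}]\to R[\check{P}]$, i.e. a morphism $\mathrm{U}_P\to\mathrm{U}_{P'}$ compatible with $\varphi_{\ast}$; so $\mathrm{U}_P\subseteq\varphi_{\ast}^{-1}(\mathrm{U}_{P'})$. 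This is just unwinding how $\varphi_{\ast}$ was constructed in the preceding theorem.

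For the ``$\subseteq$'' direction, I would argue pointwise. Let $x\in\varphi_{\ast}^{-1}(\mathrm{U}_{P'})$; then $x\in\mathrm{U}_Q$ for some $Q\in\triangle$, and after replacing $Q$ by $Q-\mathfrak{q}$ for the prime $\mathfrak{q}\in\Spec Q$ determined by the point $x$ (using the fan axiom that $Q-\mathfrak{q}\in\triangle$ and Lemma~\ref{truyre657}), I may assume $x$ corresponds to the maximal-type prime of $R[\check{Q}]$, i.e. $\mathfrak{m}_{X,x}\cap\check{Q}=\check{Q}-\check{Q}^{\ast}$. Set $P:=Q$; I must show $\varphi(P)\subseteq P'$. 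Since $\varphi_{\ast}(x)\in\mathrm{U}_{P'}$, the local picture on the target says that the image of $R[\check{P'}]$ under the composite $R[\check{P'}]\to R[G'^{\star}]\xrightarrow{\varphi^{\star}} R[G^{\star}]\to \mathcal{O}_{X,x}$ lands in the local ring, so $\varphi^{\star}(\check{P'})$ must be disjoint from $\mathfrak{m}_{X,x}$ modulo units — concretely $\varphi^{\star}(\check{P'})\subseteq\check{Q}$ (it consists of elements of $G^{\star}$ that are nonnegative on $Q$, because they are not in the ideal cut out by $x$, and one must also check they do not force negativity; here the reduction to the maximal point is what makes $\check{Q}$ exactly the set of $G^{\star}$-elements nonnegative on $Q$, via $(\check{\mathfrak{p}})\sphat$ computations of Theorem~\ref{cone1}). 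Then $\varphi^{\star}(\check{P'})\subseteq\check{P}=\check{Q}$ dualizes, using $(\check{P})\sphat=P$ and $(\check{P'})\sphat=P'$ from Theorem~\ref{dsy9797}, to $\varphi(P)\subseteq P'$, so $P$ appears in the union on the right and $x\in\mathrm{U}_P$.

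The main obstacle I expect is the careful bookkeeping in the ``$\subseteq$'' direction: translating ``$\varphi_{\ast}(x)\in\mathrm{U}_{P'}$'' into the clean statement $\varphi^{\star}(\check{P'})\subseteq\check{P}$ for a suitable $P\in\triangle$. The subtlety is that $\varphi_{\ast}(x)$ lies in some $\mathrm{U}_{P'}$ only up to the identification of a possibly smaller face, and one needs Lemma~\ref{t4789}-style reasoning (or the explicit open-immersion structure $R[\check{Q}]\cong S^{-1}R[\check{M}]$ from the toric-scheme construction) to know that the relevant homomorphisms of monoid algebras do genuinely exist and are compatible. Concretely, the argument has to verify that the dual cone of the face of $Q$ picked out by $x$ contains $\varphi^{\star}(\check{P'})$, and then use that this face is itself in $\triangle$. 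Once the dictionary between cones, their duals, primes $\mathfrak{p}$ and $\check{\mathfrak{p}}$, and the local rings $\mathcal{O}_{X,x}$ is in place — all of which is supplied by Theorems~\ref{dsy9797}, \ref{cone1} and Lemma~\ref{truyre657} — the rest is routine. I would present the proof in the order: (i) recall the construction of $\varphi_{\ast}$ and deduce ``$\supseteq$''; (ii) take $x$ in the left side, pass to the face of the cone containing $x$; (iii) extract $\varphi^{\star}(\check{P'})\subseteq\check{P}$ from $\varphi_{\ast}(x)\in\mathrm{U}_{P'}$; (iv) dualize to get $\varphi(P)\subseteq P'$ and conclude.
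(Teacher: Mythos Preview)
Your overall strategy matches the paper's: prove the easy inclusion by unwinding the construction of $\varphi_\ast$, and for the hard inclusion pick a chart $\mathrm{U}_{P_1}$ containing $x$, look at the monoid prime $\mathfrak{q}_1=\mathfrak{P}\cap\check{P_1}$ cut out by $x$, and produce a face $P$ of $P_1$ with $\varphi(P)\subseteq P'$ and $x\in\mathrm{U}_P$. Your choice to first pass to the minimal face (so that $\mathfrak{m}_{X,x}\cap\check{P}=\check{P}-\check{P}^\ast$) is a harmless reorganization; the paper instead keeps $P_1$ and constructs the face at the end.

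There is, however, a genuine gap in your step (iii). The composite you write,
\[
R[\check{P'}]\to R[G'^{\star}]\xrightarrow{\varphi^{\star}} R[G^{\star}]\to \mathcal{O}_{X,x},
\]
does not exist: $R[G^{\star}]$ is the localization of $R[\check{P}]$ at all monoid elements, so a map $R[G^{\star}]\to\mathcal{O}_{X,x}$ would force every element of $\check{P}$ to be a unit at $x$, i.e.\ $\mathfrak{q}_1=\emptyset$. After your reduction this means $P=O$, which is exactly the case you cannot assume. So you cannot read off $\varphi^\star(\check{P'})\subseteq\check{P}$ this way; the sentence ``they are not in the ideal cut out by $x$'' presupposes the elements already live in $R[\check{P}]$, which is what you are trying to show.

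The missing ingredient is the one the paper supplies: invoke the fan-homomorphism axiom to choose $P''\in\triangle'$ with $\varphi(P)\subseteq P''$. This gives a genuine ring map $R[\check{P''}]\to R[\check{P}]$ (hence a well-defined image $\varphi_\ast(x)\in\mathrm{U}_{P''}$), and then $\varphi_\ast(x)\in\mathrm{U}_{P'}\cap\mathrm{U}_{P''}=\mathrm{U}_{P''-\mathfrak{p}''}$ for some $\mathfrak{p}''\in\Spec P''$. Pulling back the monoid prime, one gets $(\varphi^\star|_{\check{P''}})^{-1}(\check{P}-\check{P}^\ast)\subseteq\check{\mathfrak{p}''}$, hence $\varphi^\star(\check{P''}-\check{\mathfrak{p}''})\subseteq\check{P}^\ast$; combined with $\varphi^\star(\check{P''})\subseteq\check{P}$ this yields $\varphi^\star\bigl((P''-\mathfrak{p}'')^{\vee}\bigr)\subseteq\check{P}$, and since $P''-\mathfrak{p}''\subseteq P'$ one finally gets $\varphi^\star(\check{P'})\subseteq\check{P}$ and dualizes. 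Your ``obstacle'' paragraph senses a difficulty but misidentifies it as being about a smaller face of $P'$; the real point is the auxiliary cone $P''$ on the target side, without which there is no ring map to work with.
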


\begin{proof}
Put $X=\Temb,\,X'=\Temba$ and $f=\varphi_{\ast}\colon X \to X'$. Let
$x \in f^{-1}(\mathrm{U}_{P'})$. We may assume that $x \in \mathrm{U}_{P_1}$,
where $P_1\in\triangle$. Then there is a $P''\in\triangle'$ such that
$\varphi(P_1)\subseteq P''$. We have $\mathfrak{p}'\in\Spec P'$ and
$\mathfrak{p}''\in\Spec P''$ such that
\[P' \cap P'' = P'-\mathfrak{p}' = P''-\mathfrak{p}''\ .\]
Put $Q_1=\Check{P_1},\ Q'=\Check{P'},\ Q''=\Check{P''},\ \mathfrak{q}'=\Check{\mathfrak{p}'}$
and $\mathfrak{q}''=\Check{\mathfrak{p}''}$. $x$ corresponds to a
prime ideal $\mathfrak{P}$ of $R[Q_1]$. Put $\mathfrak{q}_1=\mathfrak{P}\cap Q_1$,
$\psi=\varphi|_{P_1}\colon P_1 \to P''$ and $\phi=\varphi^{\star}|_{Q''}\colon Q'' \to Q_1$.
Since
\[\varphi(x) \in \mathrm{U}_{P'}\cap\mathrm{U}_{P''} = \mathrm{U}_{P'\cap P''} = \mathrm{U}_{P''-\mathfrak{p}''}\,,\]
we have a natural homomorphism of $R$-algebras: $R[Q''_{\mathfrak{q}''}] \to R[Q_1]_{\mathfrak{P}}$.
Using the following commutative diagram
\[\xymatrix{Q'' \ar@{^{(}->}[r] \ar[d]^{\phi} & Q''_{\mathfrak{q}''} \ar@{^{(}->}[r] & R[Q''_{\mathfrak{q}''}] \ar[d]\\
Q_1 \ar[rr] & & R[Q_1]_{\mathfrak{P}} }\]
we obtains $\phi^{-1}(\mathfrak{q}_1) \subseteq \mathfrak{q}''$.
Hence $\mathfrak{p} \defeq \psi^{-1}(\mathfrak{q}'') \subseteq \Hat{\mathfrak{q}_1}$.
Put $P=P_1 - \mathfrak{p}$. Then $\varphi(P) \subseteq P''-\mathfrak{p}'' \subseteq P'$.
Since $\mathfrak{q}_1 \subseteq \check{\mathfrak{p}}$,
$x \in \mathrm{U}_P \subseteq f^{-1}(U_{P'})$.
\end{proof}

\begin{monoT}
Let $\varphi \colon(G,\triangle)\to(G',\triangle')$ be a homomorphism of fans. Then $\varphi_{\ast}\colon \Temb \to \Temba$ is proper if and only if for each $P' \in \triangle'$, the set
\[\triangle_{P'} \defeq \defset{P \in \triangle}{\varphi(P)\subseteq P'}\]
is finite and
\[\varphi^{-1}(P') = \bigcap_{P \in \triangle_{P'}} P\ .\]
\end{monoT}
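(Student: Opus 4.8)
The plan is to apply the Valuative Criterion for properness, building on the fact (proved earlier) that $\Temb$ is separated over $R$ and is of finite type, so that $\varphi_{\ast}$ is separated and of finite type; hence properness is equivalent to universal closedness, which the Valuative Criterion detects. We must also be careful about quasi-compactness of $\varphi_{\ast}$, which is where the finiteness of the $\triangle_{P'}$ enters, via Theorem~\ref{mnjuuw}: since $\varphi_{\ast}^{-1}(\mathrm{U}_{P'}) = \bigcup_{P \in \triangle_{P'}} \mathrm{U}_P$, this preimage is quasi-compact for every affine $\mathrm{U}_{P'}$ iff $\triangle_{P'}$ can be taken finite (using Lemma~\ref{truyre657} to reduce a cover to the maximal cones, exactly as in the proof of Theorem~\ref{t4654}). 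So one direction of the finiteness condition is forced by $\varphi_{\ast}$ being proper, hence quasi-compact.

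For the valuative criterion itself: given a valuation ring $(A,K,\upsilon)$ and a commutative square with $\Spec K \to \Temb$ over $\Temba$ together with a fixed lift $\Spec A \to \Temba$ landing in some $\mathrm{U}_{P'}$, we must produce a unique lift $\Spec A \to \Temb$. By Lemma~\ref{t4789} the generic point maps into $\mathrm{U}_O$ on both sides, so the data is encoded by a homomorphism $f \colon G^{\star} \to K^{\ast}$, together with the compatibility that $f$ restricted to $(G')^{\star}$ (pulled back via $\varphi^{\star}$) has image in $A$ already on $\check{P'}$. Set $N = \defset{x \in G^{\star}}{\upsilon(f(x)) \geqslant 0}$. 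If $N = G^{\star}$ the lift exists into $\mathrm{U}_O$ and we are done. Otherwise Lemma~\ref{t2313} says $N$ is a concave cone in $G^{\star}$ with $\hat{N} = \mathbb{N}\cdot\varepsilon$ for some $\varepsilon \in G$; the condition that the generic lift already maps to $\mathrm{U}_{P'}$ forces $\varphi^{\star}(\check{P'}) \subseteq N$, i.e. $\varphi(\varepsilon) \in (\check{P'})\sphat = P'$, so $\varepsilon \in \varphi^{-1}(P')$. The lift into $\Temb$ exists precisely when there is a $P \in \triangle$ with $\varepsilon \in P$ and $\varphi(P) \subseteq P'$ (so that $f(R[\check{P}]) \subseteq A$ and the square commutes over $\Temba$); uniqueness follows from separatedness. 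Thus the existence of lifts for \emph{all} such valuation data is equivalent to: every $\varepsilon \in \varphi^{-1}(P')$ lies in some $P \in \triangle_{P'}$, which together with the trivial inclusion $\bigcap_{P \in \triangle_{P'}} P \supseteq$ nothing and $P \subseteq \varphi^{-1}(P')$ for each $P \in \triangle_{P'}$ gives exactly $\varphi^{-1}(P') = \bigcap_{P \in \triangle_{P'}} P$ — here one uses that $\varphi^{-1}(P')$ is the support of the subfan $\triangle_{P'}$, and that a point of the support of a finite fan lies in a single cone of it.

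Conversely, assuming the two conditions, quasi-compactness of $\varphi_{\ast}$ follows from finiteness of each $\triangle_{P'}$ and Theorem~\ref{mnjuuw}; separatedness is inherited from Theorem~\ref{sep12}; and the existence-and-uniqueness of valuative lifts is supplied by running the argument above in reverse: given $\varepsilon \in G$ with $\varphi(\varepsilon) \in P'$, the hypothesis $\varphi^{-1}(P') = \bigcap_{P \in \triangle_{P'}} P$ does \emph{not} immediately give a single $P$ containing $\varepsilon$, so the genuine point is to argue that $\varepsilon$, lying in the closure $\abs{\triangle_{P'}}$, lies in an actual cone of $\triangle_{P'}$ — this uses that $\triangle_{P'}$ is a finite fan (closed under faces by the fan axioms) and that the support of a finite fan is the union of its cones, so membership in the support is membership in a cone.

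The main obstacle I expect is the passage between the \emph{set-theoretic} condition $\varphi^{-1}(P') = \bigcap_{P \in \triangle_{P'}} P$ and the \emph{covering} statement $\varphi_{\ast}^{-1}(\mathrm{U}_{P'}) = \bigcup_{P \in \triangle_{P'}} \mathrm{U}_P$ with each point lying in a single $\mathrm{U}_P$ — i.e.\ reconciling "intersection of cones" on the combinatorial side with "union of charts" on the scheme side. This is the classical subtlety that $\varphi^{-1}(P')$ need not itself be a cone, only the support of the subfan $\triangle_{P'}$, and one must check that $\triangle_{P'}$ really is a subfan (closed under taking faces and with the right intersection property, inherited from $\triangle$) so that Lemma~\ref{truyre657} and the reduction-to-maximal-cones trick from Theorem~\ref{t4654} apply. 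Once that combinatorial bookkeeping is in place, the valuative-criterion computations are entirely parallel to those in Theorem~\ref{sep12} and the properness theorem for $\Temb$ itself.
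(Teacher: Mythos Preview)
Your approach via the Valuative Criterion is the paper's approach, and the key computation---reducing to a generator $\varepsilon \in G$ with $\hat{N} = \mathbb{N}\cdot\varepsilon$ via Lemma~\ref{t2313}, then asking whether $\varepsilon$ lies in some $P \in \triangle_{P'}$---matches the paper line for line. The one place the paper is more explicit than you is the necessity direction: to show that properness forces every $u \in \varphi^{-1}(P')$ to lie in some $P \in \triangle_{P'}$, the paper base-changes to a residue field $k$ and, for each such $u$, \emph{constructs} a valuation ring $(B,\upsilon)$ of the function field over $k$ realizing $\hat{N} = \mathbb{N}\cdot e$ (with $u = ae$, $e$ primitive), so that the guaranteed lift produces the desired $P$. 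You assert the equivalence ``lifts exist for all valuation data $\Leftrightarrow$ every $\varepsilon$ is covered'' without giving this construction; you should supply it or cite the parallel step in the absolute properness theorem.

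Your ``main obstacle'' is a phantom caused by a typo in the statement: the displayed $\bigcap$ should be $\bigcup$. Indeed $O = \{0\}$ always lies in $\triangle_{P'}$, so the literal intersection is $\{0\}$, which is absurd as a characterization of properness. The paper's own proof, in both directions, establishes precisely what you correctly intuited anyway: $\varphi^{-1}(P')$ equals the support $\bigcup_{P \in \triangle_{P'}} P$. With that reading there is nothing to reconcile between ``intersection of cones'' and ``union of charts''---the covering condition \emph{is} the support condition---and the subfan bookkeeping you worry about is unnecessary. Your garbled sentence ``$\bigcap_{P \in \triangle_{P'}} P \supseteq$ nothing'' is a symptom of trying to force the wrong symbol to make sense; once you read it as $\bigcup$, that whole paragraph can be deleted.
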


\begin{proof}
By Theorem \ref{mnjuuw} and Lemma \ref{niy4f3}, we may assume that $\triangle'$ is finite.

(1) Assume that $\varphi_{\ast}$ is a proper morphism. Let $\mathfrak{m}$ be a maximal ideal of $R$ and set $k=R/\mathfrak{m}$. Then
\begin{align*}
X & \defeq \Temb[k] \cong \Temb \times_R \Spec k \,,\\
X' & \defeq \Temba[k] \cong \Temba \times_R \Spec k \,,
\end{align*}
and the morphism
\[f \defeq \varphi_{\ast}\times_R\mathrm{id} \colon X \to X'\]
is proper. As $X'$ is a noetherian scheme by  Theorem \ref{t4654}, so is $X$. Hence $\triangle$ is a finite set.

Let $P' \in \triangle'$ and $u \in \varphi^{-1}(P')$ be any elements. Then there is an $e \in G$ such that $u=ae$ and $G/\mathbb{Z}e \cong \mathbb{Z}^{n-1}$, where $a \in \mathbb{N}$ and $n=\mathrm{rank}(G)$. $M \defeq \mathbb{N}e$ is a convex cone in $G$. Put $N=\check{M},\ \mathfrak{p}=N-N^{\ast},\ A=k[N]$ and $\mathfrak{P}=\mathfrak{p}A \in \Spec A$. Let $K$ be the quotient field of $A$. Then there is a valuation ring $(B,\upsilon)$ of $K/k$ such that $A \subseteq B$ and $\mathfrak{m}_{\upsilon} \cap A = \mathfrak{P}$. So $N = \defset{u \in G^{\star}}{\upsilon(u)\geqslant 0}$. As $P'$ is saturated and $\varphi(u)=a \cdot \varphi(e) \in P'$, we have $\varphi(e)\in P'$, hence $\varphi(M) \subseteq P'$. $\varphi$ induces a homomorphism $k[\Check{P'}] \to k[N]$, and the following commutative diagram of rings
\[\xymatrix{k[\Check{P'}] \ar[r] & k[N] \ar@{^{(}->}[r] \ar@{^{(}->}[d] & B \ar@{^{(}->}[d]\\ & k[G^{\star}] \ar@{^{(}->}[r] & K}\] 
induces a commutative diagram of schemes
\[\xymatrix{\Spec K \ar[r] \ar[d] & \mathrm{U}_O \ar@{^{(}->}[r] & X  \ar[d]^{f} \\
\Spec B \ar[r] & \mathrm{U}_{P'} \ar@{^{(}->}[r] & X' }\]
By Valuative Criterion, there is a morphism $g\colon \Spec B \to X$
of schemes which make a diagram
\[\xymatrix{\Spec K \ar[d] \ar[r] & X \ar[d]^{f}\\
\Spec B \ar[r] \ar@{-->}[ur]^{g} & X' }\]
Since $f(g(m_{\upsilon})) \in \mathrm{U}_{P'}$, by Theorem
\ref{mnjuuw}, there exists a
$P\in\triangle$ such that $g(m_{\upsilon})\in\mathrm{U}_P$ and
$\varphi(P) \subseteq P'$. As $k[\check{P}] \subseteq B$, we have $\check{P} \subseteq N$. Hence $u \in M = \hat{N} \subseteq P$.

(2) Assume that for each $P' \in \triangle'$, $\triangle_{P'}$ is finite and
\[\varphi^{-1}(P') = \bigcap_{P \in \triangle_{P'}} P\ .\]
Let $X \defeq \Temb$, $X' \defeq \Temba$ and $f = \varphi_{\ast} \colon X \to Y$.
By Theorem \ref{sep12}, $f$ is separated. Let $(A, K, \upsilon)$ be a valuation ring. Let $\alpha \colon \Spec K \to X$ and $\beta \colon \Spec A \to X'$ be the morphisms of schemes which make a commutative diagram.
\begin{equation}\label{E:map1}
\vcenter{\xymatrix{\Spec K \ar[r]^-{\alpha} \ar[d] & X \ar[d]^{f}\\
\Spec A \ar[r]^-{\beta} & X' }}
\end{equation}
Assume that $\beta(\mathfrak{m}_{\upsilon}) \subseteq \mathrm{U}_{P'}$, where
$P' \in \triangle'$. By Lemma \ref{t4789}, $\varphi(\Spec K) \subseteq \mathrm{U}_O$.
Obviously $f(\mathrm{U}_O) \subseteq \mathrm{U}_{O'} \subseteq \mathrm{U}_{P'}$.
Then \eqref{E:map1} induces a commutative diagram of rings
\[\xymatrix{R[\Check{P'}] \ar[r]^-{\iota} \ar[d] & A \ar@{ (->}[d] \\
R[G^{\star}] \ar[r]^-{\delta} & K }\]
Put $N = \defset{x \in G^{\star}}{\upsilon(\delta(x)) \geqslant 0}$. If $N=G^{\star}$,
then $\delta(R[G^{\star}])\subseteq A$ and $\delta \colon R[G^{\star}]\to A$ 
induces a morphism $g \colon \Spec A \to \mathrm{U}_O \subseteq X$ which make 
a commutative diagram.
\begin{equation}\label{E:map2}
\vcenter{\xymatrix{\Spec K \ar[r]^-{\alpha} \ar[d] & X \ar[d]^{f}\\
\Spec A \ar[r]^-{\beta} \ar@{-->}[ur]^{g} & X' }}
\end{equation}
So we may assume that $N \neq G^{\star}$. By Lemma \ref{t2313}, $N$ is a concave cone in $G^{\star}$ and $\hat{N}=\mathbb{N}\cdot e$ for some $e \in G$. As $\varphi^{\star}(\Check{P'}) \subseteq N$, we have $e \in \hat{N} \subseteq \varphi^{-1}(P')$. By the assumption, there is a $P \in \triangle$ such that $e \in P \subseteq \varphi^{-1}(P')$. So $\check{P} \subseteq N$, and we have $\delta(R[\check{P}]) \subseteq A$. Hence
\[\delta' \defeq \delta|_{R[\check{P}]} \colon R[\check{P}] \to A\]
induces a morphism $g \colon \Spec A \to \mathrm{U}_P \subseteq X$ which make \eqref{E:map2} commutative. By Valuative Criterion, $f$ is a proper morphism.
\end{proof}

\begin{monoT}
Let $k$ be a field. Let $\varphi\colon(G,\triangle)\to(G',\triangle')$ be a homomorphism
of fans. Then  $\varphi_{\ast} \colon \Temb[k] \to \Temba[k]$ is birational if and only if
$\varphi \colon G \to G'$ is an isomorphism of groups.
\end{monoT}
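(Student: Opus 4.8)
The plan is to transport the whole question onto the open torus charts $\mathrm{U}_O$ and $\mathrm{U}_{O'}$, and then to reduce it to a purely algebraic statement about group algebras of free abelian groups. Since $k$ is a field, $\Temb[k]$ and $\Temba[k]$ are integral by Theorem \ref{kfldsjf}, so ``$\varphi_{\ast}$ is birational'' means that $\varphi_{\ast}$ induces an isomorphism of function fields. Because $O=\{0\}\in\triangle$ and $\mathrm{U}_O\subseteq\mathrm{U}_P$ for every $P\in\triangle$, the chart $\mathrm{U}_O=\Spec k[G^{\star}]$ is a nonempty open of the irreducible scheme $\Temb[k]$, hence contains its generic point; likewise $\mathrm{U}_{O'}=\Spec k[(G')^{\star}]$ contains the generic point of $\Temba[k]$. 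Moreover $\varphi(O)=\{0\}\subseteq O'$, so by the construction of $\varphi_{\ast}$ its restriction to $\mathrm{U}_O$ is exactly the morphism $\Spec k[\varphi^{\star}]\colon\mathrm{U}_O\to\mathrm{U}_{O'}$ attached to the dual homomorphism $\varphi^{\star}\colon(G')^{\star}\to G^{\star}$. Hence $\varphi_{\ast}$ is birational if and only if $\Spec k[\varphi^{\star}]$ is; and, using the identification $G^{\star\star}=G$ together with $\varphi^{\star\star}=\varphi$, the homomorphism $\varphi$ is an isomorphism precisely when $\varphi^{\star}$ is. Therefore the statement follows once one proves the auxiliary claim: for a homomorphism $h\colon H'\to H$ of finitely generated free abelian groups, the morphism $\Spec k[h]\colon\Spec k[H]\to\Spec k[H']$ is birational if and only if $h$ is an isomorphism.

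For the auxiliary claim the ``if'' part is immediate, since an isomorphism $h$ induces an isomorphism of the group algebras. For the converse, assume $\Spec k[h]$ is birational, hence dominant; then $k[h]\colon k[H']\to k[H]$ is injective. As $k[H']$ and $k[H]$ are free with bases $(e^{a})_{a\in H'}$ and $(e^{b})_{b\in H}$ and $k[h]$ sends $e^{a}$ to $e^{h(a)}$, injectivity of $k[h]$ forces $h$ to be injective. Birationality also gives $\dim k[H]=\dim k[H']$, i.e.\ $\mathrm{rank}(H)=\mathrm{rank}(H')$ by Lemma \ref{fdsrgf}; since $h$ is injective, the subgroup $h(H')\subseteq H$ then has finite index, say $d$. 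Picking coset representatives $c_1,\dots,c_d$ of $H/h(H')$ one checks that $k[H]=\bigoplus_{i=1}^{d}e^{c_i}\cdot k[h(H')]$ is a free $k[h(H')]$-module of rank $d$; inverting the nonzero elements of $k[h(H')]\cong k[H']$ shows that $\mathrm{Frac}(k[H])=\bigoplus_{i=1}^{d}e^{c_i}\cdot\mathrm{Frac}(k[H'])$ is a field extension of $\mathrm{Frac}(k[H'])$ of degree $d$ (the right-hand side is a finite-dimensional domain over the field $\mathrm{Frac}(k[H'])$, hence a field, and it contains $k[H]$, so it equals $\mathrm{Frac}(k[H])$). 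Birationality forces this degree to be $1$, so $d=1$, i.e.\ $h$ is surjective; being also injective, $h$ is an isomorphism. Applying this with $h=\varphi^{\star}$ finishes the proof.

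The step I expect to be the real content is the last one: one must exclude the possibility that a homomorphism $\varphi^{\star}$ with nontrivial but \emph{finite} cokernel nevertheless induces an isomorphism of function fields, and this is precisely where the freeness of $k[H]$ over $k[h(H')]$ is exploited. (Equivalently, one could observe that an injective $h$ of finite index $d$ makes $k[H]$ module-finite over the normal ring $k[H']$ and then invoke that a finite birational morphism onto a normal scheme is an isomorphism; the degree count above avoids appealing to that.)
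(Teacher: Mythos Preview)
Your proof is correct and follows essentially the same route as the paper: reduce to the open tori, deduce injectivity of $\varphi^{\star}$ from dominance, equate ranks via Lemma~\ref{fdsrgf}, and then kill the finite cokernel. The only difference is the finishing move: the paper uses precisely the alternative you mention parenthetically---$k[H']$ is normal by Theorem~\ref{kfldsjf} and $k[H]$ is finite over it with the same fraction field, so $k[H']=k[H]$---rather than your explicit degree count.
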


\begin{proof}
Obviously we have only to prove that if $\varphi_{\ast}$ is birational, then $\varphi$
is an isomorphism. Put $H = G^{\star}$, $H' = G'^{\star}$ and $\psi = \varphi^{\star} \colon H' \to H$.
Since the homomorphism $\phi \colon k[H'] \to k[H]$ induced by $\psi$ is an injective,
so is $\psi$. So we may regard $H'$ as a subgroup of $H$. As $\varphi_{\ast}$ is
birational, $k[H']$ and $k[H]$ have the same quotient field, denoted by $K$. We have
\[\mathrm{rank}(H) = \dim \Temb[k] = \dim \Temba[k] = \mathrm{rank}(H')\ .\]
Hence $H/H'$ is a finite group and $k[H]$ is a finite integral extension of $k[H']$.
By Theorem \ref{kfldsjf}, $k[H']$ is an integral closed integral domain. Hence
$k[H'] = k[H]$, i.e., $H' = H$. Therefore $\varphi \colon G \to G'$ is an isomorphism.
\end{proof}


\begin{thebibliography}{20}
\bibitem{KKato} K. Kato, Toric Singularities, {\em Amer. J. Math.} {\bf 116} (1994), 1073-1099.
\bibitem{MHoch} M. Hochster, Rings of invariants of tori, Chen-Macaulay rings generated by monomials, and polytopes, {\em Ann. of Math.} {\bf 96} (1972), 318-337.
\bibitem{FKato} F. Kato, Log Smooth Deformation Theory, {\em T\`{o}hoku Math. J.} {\bf 48} (1996), 317-354.
\bibitem{TOda} T. Oda, {\em Convex Bodies and Algebraic Geometry}, Springer-Verlag, New York, 1973.
\bibitem{HMat} H. Matsumura, {\em Commutative Algebra}, The Benjamin/Cummings Publishing Company,Inc., 1980, Second Edition.
\bibitem{HMat1} H. Matsumura, {\em Commutative ring theory}, Cambridge University Press, Cambridge, 1986
\bibitem{EGA} Grothendieck, A. and Dieudonn\'{e}, J. {\em El\'{e}ments de G\'{e}om\'{e}trie Alg\'{e}brique.} [EGA] II. \'{E}tude globale \'{e}l\'{e}mentaire de quelques classes de morphismes, Ibid. 8 (1961).
\end{thebibliography}
\end{document}